\documentclass[10pt, oneside]{amsart}

\title{Global transfer systems and endopurity of subgroups}

\author[Balchin]{Scott Balchin}
 \address{Mathematical Sciences Research Centre, Queen's University Belfast, UK}
  \email{s.balchin@qub.ac.uk}

\author[Barrero]{Miguel Barrero}
 \address{Max Planck Institute for Mathematics, Vivatsgasse 7, 53111 Bonn,
Germany}
 
  \email{barrero@mpim-bonn.mpg.de}

\usepackage[parfill]{parskip}

\usepackage[english]{babel}
\usepackage{mathtools} 
\usepackage[utf8]{inputenc}
\usepackage[T1]{fontenc}
\usepackage{amsmath, amsthm, amsfonts, mathrsfs, amssymb}
\usepackage{tikz-cd}
\usepackage{quiver}
\usepackage{caption}
\usepackage{pgfplots}
\pgfplotsset{compat=1.18}
\usepackage[shortlabels]{enumitem}
\usepackage[mathscr]{eucal}
\usepackage[all]{xy}
\xyoption{line}

\usepackage[a4paper, textheight = 24.5cm, textwidth = 16cm, top = 2.5cm, centering]{geometry}
\usepackage[final]{microtype}

\usepackage[backend=biber, style=alphabetic, maxnames=9,minnames=3, maxalphanames=3, doi=false, isbn=false, url=false, eprint=false]{biblatex}
\makeatletter
\newcommand{\thickhline}{%
    \noalign {\ifnum 0=`}\fi \hrule height 2pt
    \futurelet \reserved@a \@xhline
}

\usepackage{hyperref}

\hypersetup{
    colorlinks,
    linkcolor=red!80!black,
    citecolor = red!80!black
}

\usepackage{zref-clever}
\zcsetup{cap=true}
\AddToHook{env/thm/begin}{%
\zcsetup{countertype={thm=theorem}}}
\AddToHook{env/coro/begin}{%
\zcsetup{countertype={thm=corollary}}}
\AddToHook{env/lemm/begin}{%
\zcsetup{countertype={thm=lemma}}}
\AddToHook{env/prop/begin}{%
\zcsetup{countertype={thm=proposition}}}
\AddToHook{env/defi/begin}{%
\zcsetup{countertype={thm=definition}}}
\AddToHook{env/rem/begin}{%
\zcsetup{countertype={thm=remark}}}
\AddToHook{env/ex/begin}{%
\zcsetup{countertype={thm=example}}}

\newtheorem{thm}{Theorem}[section]
\newtheorem{coro}[thm]{Corollary}
\newtheorem{lemm}[thm]{Lemma}
\newtheorem{prop}[thm]{Proposition}

\newtheorem*{thmintro}{Theorem}
\theoremstyle{definition}
\newtheorem{defi}[thm]{Definition}

\newtheorem*{defintro}{Definition}
\newtheorem*{conjintro}{Conjecture}
\theoremstyle{remark}
\theoremstyle{remark}

\theoremstyle{remark}
\newtheorem{rem}[thm]{Remark}
\theoremstyle{remark}
\newtheorem{ex}[thm]{Example}

\newtheorem{constr}[thm]{Construction}
\zcRefTypeSetup{constr}{
Name-sg = Construction ,
name-sg = construction ,
Name-pl = Constructions ,
name-pl = constructions ,
}
\AddToHook{env/constr/begin}{%
\zcsetup{countertype={thm=constr}}}

\usepackage{listings}

\newcommand{\conlat}{\underline{\mathfrak{B}}}
\renewcommand{\leq}{\leqslant}
\renewcommand{\geq}{\geqslant}

\DeclareMathOperator{\Sub}{Sub}

\newcommand{\fami}{\mathcal{F}}

\newcommand{\la}{\langle}
\newcommand{\ra}{\rangle}

\DeclareMathOperator{\SL}{SL}

\DeclareMathOperator{\Img}{Im}

\DeclareMathOperator{\id}{id}

\definecolor{codegreen}{rgb}{0,0.6,0}
\definecolor{codegray}{rgb}{0.5,0.5,0.5}
\definecolor{codepurple}{rgb}{0.58,0,0.82}
\definecolor{backcolour}{rgb}{0.95,0.95,0.92}

\lstdefinestyle{mystyle}{
    backgroundcolor=\color{backcolour},   
    commentstyle=\color{codegreen},
    keywordstyle=\color{magenta},
    numberstyle=\tiny\color{codegray},
    stringstyle=\color{codepurple},
    basicstyle=\ttfamily\footnotesize,
    breakatwhitespace=false,         
    breaklines=true,                 
    captionpos=b,                    
    keepspaces=true,                 
    numbers=left,                    
    numbersep=5pt,                  
    showspaces=false,                
    showstringspaces=false,
    showtabs=false,                  
    tabsize=2
}

\usepackage[disable]{todonotes}

\usetikzlibrary{positioning}
  \def\gn#1#2{{$\href{http://groupnames.org/\#?#1}{#2}$}}
\def\gn#1#2{$#2$}  
\tikzset{sgplattice/.style={inner sep=1pt,norm/.style={red!50!blue},char/.style={red!80!black},
  lin/.style={black!50}},cnj/.style={black!50,yshift=-2.5pt,left=-1pt of #1,scale=0.5,fill=white}}

\usepackage{multicol}

\begin{document}

\begin{abstract}
    Global transfer systems are combinatorial objects that parametrize intermediate levels of structures and properties in globally equivariant homotopy theory. In this paper we study the lattice of global transfer systems, expanding beyond the case of abelian groups which was computed by the second author. The main result is a proof that the irreducible elements of the lattice of global transfer systems are strictly controlled by subgroups of groups which we refer to as \emph{endopure}. From the irreducible elements one can recover the whole lattice, and studying these endopure subgroups leads us to a wide range of new phenomena and computations.
\end{abstract}

\maketitle

\tableofcontents

\section{Introduction}

 Given a family $\fami$ of finite groups, a \emph{global $\fami$-transfer system}, as introduced in \cite{globalNinf}, is a collection of subgroup pairs $H \leq G \in \fami$ subject to certain compatibility conditions:
 
\begin{defintro}[\zcref{defi:gts}]
    A \emph{global $\mathcal{F}$-transfer system} $T$ is a binary relation $\leqslant_T$ on $\mathcal{F}$ such that:
        \begin{enumerate}
            \item The relation $\leqslant_T$ is a partial order that refines the subgroup relation on $\mathcal{F}$.
            \item The relation $\leqslant_T$ is closed under restriction along arbitrary homomorphisms. That is, for any $G,H,K \in \mathcal{F}$ and any homomorphism $\phi \colon G \to H$, if $K \leqslant_T H$, then $\phi^{-1}(K) \leqslant_T G$.
        \end{enumerate}
\end{defintro}

Global transfer systems originate in the setting of global homotopy theory which studies objects which have equivariant structures for all groups in a given family, and which are compatible with restrictions along all group homomorphisms. This strong compatibility requirement can be quire restrictive, in the sense that most equivariant $G$-spaces and $G$-spectra cannot be globally refined, but in turn the resulting additional structure can lead to strong results. See \cite{global_book} for an overview of the theory.

Global transfer systems are equivalent to \emph{atomic orbital} subcategories of the global orbit category, and as such they parametrize intermediate levels of global semiadditivity and stability by work of Cnossen, Lenz and Linskens \cite{MR4990070}. Global transfer systems are also equivalent to \emph{global $N_\infty$-operads} by \cite{globalNinf}. Both of these interpretations allow one to construct a \emph{transfer map} on the relevant homotopical object for each transfer pair $H \leq_T G$ in a global transfer system, which motivates their name.

For each group $G \in \fami$, one can restrict a global transfer system to obtain an equivariant $G$-transfer system. The classification of equivariant transfer systems is a very active and current research topic, which was started in \cite{BBR}. In \cite{abeliantransfers} the second author classified global transfer systems for abelian groups. In this paper we expand our interest to the non-abelian case.

Global transfer systems for a family $\fami$ form a complete lattice $\mathsf{Tr}^{\mathsf{Gl}}(\fami)$, with meet given by intersection, and join given by the global transfer system generated by the union. In fact, in the $G$-equivariant setting this join of transfer systems corresponds to the Boardman-Vogt tensor product of associated $N_\infty$-operads by \cite{stewart2025tensorproducts}. Well-behaved complete lattices can be recovered from their sets of join-irreducible and meet-irreducible elements, plus the induced relation between these. This is the theory of \emph{Formal Concept Analysis}, which interprets these irreducible elements as \emph{objects} and \emph{attributes}, and refers to the elements of the recovered lattice as \emph{formal concepts}, see \cite{FCAbook} for example for an introduction to the topic. We review the relevant theory in \zcref{subsec:fca}.

This characterization of a lattice in terms of its irreducible elements was employed by the first author and Spitz in \cite{balchinspitz} to study the lattice of equivariant $G$-transfer systems. In the equivariant setting, the irreducible elements correspond precisely to transfer systems generated by a single transfer pair $H \lneq K$, and the induced relation between these irreducible elements is easily understood. This leads to a powerful computational tool for exploring the collection of equivariant transfer systems for a fixed group $G$. 

The goal of the current paper is to port this computational power to the global case. In the global setting we can immediately see that any join-irreducible global transfer system can also be generated by a single transfer pair, as any global transfer system can be written as an infinite join of the global transfer systems generated by the individual transfers it contains. However, unlike in the equivariant case, a direct examination of all but the simplest of examples shows that a single transfer does not usually generate a join-irreducible element. This leads to the question of determining which transfers generate join- and meet-irreducible global transfer systems, which we shall see is completely answered by a special class of subgroups that we will now describe.

Consider a subgroup pair $H \lneq G$. The global transfer system generated by it consists of all transfers that can be written as a composition of restrictions of $H \lneq G$. Thus, if we can write it as a non-trivial join, we are also able to write the pair $H \lneq G$ as a composition of smaller pairs, which are themselves restrictions of $H \lneq G$. Consider the topmost pair in this composition, it must be a restriction along some endomorphism $\phi \colon G \to G$, so that $\phi^{-1}(H)$ sits strictly between $H$ and $G$. Therefore, if we can guarantee that there exists no such $\phi$ for $H \lneq G$, we will obtain that the global transfer system generated by $H \lneq G$ is join-irreducible. For example, any subgroup pair of prime index is of this form.

One issue with this characterization is that different transfers can generate the same global transfer system. This is the case, for example, for $1 \lneq C_2$ and $C_3 \lneq C_6$. To remedy this, we use the observation that if there exists a non-injective $\phi \colon G \to G$ with $\phi^{-1}(H)=H$, then we can construct a smaller transfer that will generate the same global transfer system as $H \lneq G$. For this reason, we will disallow the existence of such $\phi$, leading to the following key definition:

\begin{defintro}[\zcref{defi:endopure}]
    Let $H \lneq G$ be a proper subgroup. Then $H$ is \emph{endopure} in $G$ if there are no non-injective endomorphisms $\phi$ of $G$ with $H \leq \phi^{-1}(H) \lneq G$. 
\end{defintro}

These endopure subgroups behave as building blocks under restriction along homomorphisms of the poset of all finite groups and subgroups, and as far as we can tell this is a novel group theoretic notion. All proper subgroups of simple groups are endopure, but it turns out that there are many more examples than these, some of which are unexpected. In \zcref{sec:endopure} and \zcref{sec:examples} we examine many of these. We highlight one particular result:

\begin{thmintro}[\zcref{prop:abendo}]
    Let $G$ be a finite abelian group. Then $H \lneq G$ is endopure if and only if $G = C_{p^n}$ for some prime $p$, and $H = C_{p^{n-1}}$.
\end{thmintro}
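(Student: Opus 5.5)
The plan is to prove sufficiency directly, and to prove necessity by exhibiting explicit non-injective endomorphisms. In the abelian case, multiplication by integers and projections onto direct summands give enough of them.

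\emph{Sufficiency.} Let $G=C_{p^n}$ with $n\geq 1$, and let $H=C_{p^{n-1}}$, which is the unique maximal subgroup. Any subgroup containing $H$ is $H$ or $G$. So a hypothetical $\phi$ must satisfy $\phi^{-1}(H)=H$. A non-injective endomorphism of $C_{p^n}$ is multiplication by some $m$ with $p\mid m$. Its image lies in $pG=H$, so $\phi^{-1}(H)=G$. Hence no such $\phi$ exists, and $H$ is endopure.

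\emph{Necessity, step 1: multiplication maps.} Let $H\lneq G$ be endopure with $G$ abelian. For an integer $m$, let $\mu_m$ denote multiplication by $m$. We have $\mu_m^{-1}(H)=\{g : mg\in H\}\supseteq H$. This equals $G$ exactly when $m$ annihilates $G/H$. Also, $\mu_m$ is non-injective exactly when $\gcd(m,|G|)>1$.
\begin{itemize}
\item Pick a prime $p$ dividing $|G/H|$ and apply $\mu_p$. It is non-injective, so endopurity forces $p(G/H)=0$. Thus $G/H$ is elementary abelian and $pG\leq H$.
\item For any other prime $q\mid |G|$, the map $\mu_q$ is non-injective. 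It acts invertibly on the nontrivial $p$-group $G/H$, so $\mu_q^{-1}(H)\neq G$, which is a contradiction. Hence $G$ is a $p$-group.
\end{itemize}

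\emph{Necessity, step 2: non-cyclic $G$.} It remains to show $G$ is cyclic. Then $pG\leq H\lneq G$ with $G/pG\cong C_p$ forces $H=pG=C_{p^{n-1}}$. Suppose instead that $G=\langle x_1\rangle\oplus\cdots\oplus\langle x_r\rangle$ with $r\geq 2$. For each $i$, let $\pi_i$ be the projection killing $\langle x_i\rangle$; it is non-injective.
\begin{itemize}
\item If some $x_i$ lies in $H$, then for $g\in G$ we have $\pi_i(g)=g-c_ix_i$ for some integer $c_i$. Hence $\pi_i^{-1}(H)=H+\langle x_i\rangle=H\neq G$, contradicting endopurity.
\item If $H=pG$, then $\pi_r^{-1}(pG)=pG+\langle x_r\rangle$. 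This contains $H$, and it is proper because $r\geq 2$ (it has index $p^{r-1}$). Again this contradicts endopurity.
\end{itemize}

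\emph{Main obstacle.} The general case has $pG\lneq H\lneq G$. Pass to $V=G/pG$ and $W=H/pG$, a nonzero proper subspace. I would try to choose the cyclic decomposition so that some generator $x_i$ lies in $H$, by applying automorphisms of $G$. These act on $V$ through a parabolic-type group: $x_j\mapsto x_j+y$ is allowed when the order of $y$ divides that of $x_j$. One difficulty is that $W$ may not contain any basis vector in an adapted form, for example $W=\langle \bar x_1-\bar x_2\rangle$ with $|x_1|>|x_2|$.

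In that situation my fallback is to build a different non-injective $\phi$ directly. I would take $\phi$ to be the identity on all summands except one, and send that summand's generator into $H$ or into $pG$ in a way compatible with orders. I would choose the summand of smallest order, since maps out of it are least constrained. Then I would check $\phi(H)\subseteq H$, which gives $H\leq\phi^{-1}(H)$. Finally I would show $\phi^{-1}(H)\neq G$ using $\dim V\geq 2$.
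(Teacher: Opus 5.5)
Your sufficiency argument, Step 1, and the two bullets of Step 2 are correct. Step 1 is a neat alternative to the paper, which handles non-$p$-groups by projecting onto a Sylow subgroup; you instead get $pG\leq H$ from $\mu_p$ and then rule out other primes with $\mu_q$.

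The genuine gap is the case you flag as the main obstacle: $pG\lneq H\lneq G$ with $G$ a non-cyclic $p$-group and no $x_i\in H$. This is the substance of the necessity direction, it is left unproved, and the fallback you sketch cannot close it. If $\phi$ fixes each $x_j$ with $j\neq k$ and $\phi(x_k)=y$, then $\phi(h)=h+a_k(y-x_k)$ for $h=\sum_j a_jx_j\in H$.
\begin{itemize}
\item When the $k$-th coordinate is nonzero on $W=H/pG$, some $h\in H$ has $a_k$ prime to $p$. So $\phi(H)\leq H$ forces $y-x_k\in H$, and then $y\in H$ or $y\in pG$ gives $x_k\in H$, a case you have already done.
\item When the $k$-th coordinate vanishes on $W$, the construction gives nothing beyond the projection $\pi_k$.
\end{itemize}
In your own example $W=\langle\bar x_1-\bar x_2\rangle$, both coordinates are nonzero on $W$ and neither $x_i$ lies in $H$, so no choice of $k$ works. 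Also, the smallest-order summand is the \emph{most} constrained source, not the least, since $\phi(x_k)$ must have order dividing $|x_k|$.

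The missing step is in fact supplied by your first idea, and your example is no obstruction to it: $x_1\mapsto x_1-x_2$ is an allowed substitution precisely because $|x_2|$ divides $|x_1|$. In general, pick $0\neq w\in W$, let $k$ be an index in the support of $w$ with $|x_k|$ maximal, and scale so that $w_k=1$. Put $x_k'=x_k+\sum_{j\neq k}w_jx_j$, using integer lifts of the coordinates. Every $x_j$ occurring has order dividing $|x_k|$, so $G=\langle x_k'\rangle\oplus\bigoplus_{j\neq k}\langle x_j\rangle$. Moreover $x_k'\in H$, because its image in $V$ is $w$ and $pG\leq H$. Your first bullet then finishes.

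The paper instead treats all of $pG\leq H$ at once. It takes a nonzero $\theta\colon G/pG\to C_p$ vanishing on $H/pG$, sets $c_j=\theta(\bar x_j)$, and uses $\phi(x_j)=c_jx_i$. This sends $H$ into $pG$ but sends $x_i\notin H$ to $c_ix_i\notin H$. For $\phi$ to be well defined, one must take $|x_i|$ minimal among the indices with $c_i\neq 0$, a point the paper leaves implicit. In your example both routes yield the same endomorphism, $\phi(x_1)=\phi(x_2)=x_2$.
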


We prove in \zcref{sec:irreducibles} that endopure transfers inside the family $\fami$ generate and uniquely determine all join-irreducible global transfer systems. The classification of meet-irreducible global transfer systems requires comparatively more work. We prove that under a different construction, endopure pairs also generate and uniquely determine all the meet-irreducible global transfer systems. This, combined with the fundamental theorem of Formal Concept Analysis (see \zcref{thm:FTFCA2}) yields our main theorem:

\begin{thmintro}[\zcref{thm:summary}]
    Let $\mathcal{F}$ be a family of finite groups and $\mathsf{EP}(\fami)$ the collection of endopure transfers in $\fami$. Then there is a canonical isomorphism
    \[\varepsilon_\fami \colon \mathsf{Tr}^\mathsf{Gl}(\fami) \to \conlat(\mathsf{EP}(\fami), \mathsf{EP}(\fami), {\leq_{\mathsf{EP}}})\]
    from the complete lattice of $\fami$-global transfer systems to the lattice of formal concepts in the formal context of endopure pairs in $\fami$. This isomorphism sends $T$ to the formal concept given by the two subsets \[(\{(L\leq K) \in \mathsf{EP}(\fami) \mid (L \leq K) \in T\},\{(H\leq G) \in \mathsf{EP}(\fami) \mid  (H' \leq G) \notin T \; \text{for every} \; H \leq H' \lneq G \}).\]
    
    The relation $\leq_{\mathsf{EP}}$ in $\mathsf{EP}(\fami) \times \mathsf{EP}(\fami)$ is given by $(L \leq K) \leq_{\mathsf{EP}} (H \leq G)$ if there are no homomorphisms $\phi \colon G \to K$ with $H \leq \phi^{-1}(L) \lneq G$.
\end{thmintro}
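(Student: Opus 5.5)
The plan is to apply the fundamental theorem of Formal Concept Analysis (\zcref{thm:FTFCA2}). It says that a complete lattice $L$ is isomorphic to the concept lattice $\conlat(J,M,\leq)$ when $J$ is a join-dense subset, $M$ is a meet-dense subset, and the incidence relation is the restriction of the order of $L$. The isomorphism is $x \mapsto (\{j \in J \mid j \leq x\}, \{m \in M \mid x \leq m\})$. For the lattice $\mathsf{Tr}^{\mathsf{Gl}}(\fami)$ we therefore need three things: a join-dense family indexed by $\mathsf{EP}(\fami)$, a meet-dense family indexed by $\mathsf{EP}(\fami)$, and a translation of the order between them into the relation $\leq_{\mathsf{EP}}$.

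\textbf{Join side.} For each endopure pair $L \lneq K$, take $\langle L \leq K\rangle$, the global transfer system generated by the pair. Every transfer system is the join of the systems generated by its single transfers, so it suffices to show that each $\langle H \leq G\rangle$ is a join of endopure-generated systems. I would argue by induction on $|G|$ and then on $[G:H]$.

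\begin{itemize}
\item If some endomorphism $\phi$ of $G$ satisfies $H \lneq \phi^{-1}(H) \lneq G$, then $H \leq G$ factors as a composite of two transfers, each generated by a restriction of $H \leq G$ of smaller index.
\item If instead some non-injective $\phi$ satisfies $\phi^{-1}(H)=H$, then $H \leq G$ is a restriction of $\phi(H) \leq \phi(G)$, which lives in a smaller group, and conversely generates it.
\end{itemize}

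Iterating, every pair lands on endopure pairs. Note that restriction along the projection $G \to \phi(G)$ is available, since families are closed under quotients as well as subgroups.

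The identity $\langle L\leq K\rangle \subseteq T \iff (L \leq K) \in T$ is immediate. This gives the first component of $\varepsilon_\fami$.

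\textbf{Meet side.} For an endopure pair $H \lneq G$, define $M_{H\leq G}$ to be the largest global transfer system not containing any pair $H' \leq G$ with $H \leq H' \lneq G$. Concretely, $M_{H\leq G}$ should consist of all pairs $A \leq B$ such that no composite of restrictions of $A \leq B$ yields such an $H' \leq G$. The key check is that this set is closed under restriction and composition, which follows because the forbidden set is upward closed in $H'$ and is defined by a "cannot reach" condition.

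Meet-density then requires the following. If a pair $A \leq B$ is not in $T$, we must find an endopure $H \lneq G$ with $T \subseteq M_{H\leq G}$ and $(A\leq B)\notin M_{H\leq G}$. I would take a minimal counterexample $A \leq B$ with respect to $|B|$ and then maximal $A$. We enlarge $A$ to a maximal $H'$ with $H' \leq B$ not in $T$, then quotient $B$ by non-injective endomorphisms preserving the pair until the pair becomes endopure. The condition ``$(H'\leq G)\notin T$ for every $H\leq H'\lneq G$'' is exactly $T \subseteq M_{H\leq G}$, which matches the second component.

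\textbf{Incidence.} Finally we must verify that $\langle L\leq K\rangle \subseteq M_{H\leq G}$ if and only if there is no $\phi\colon G\to K$ with $H\leq\phi^{-1}(L)\lneq G$. The direction ``if $\phi$ exists then $\phi^{-1}(L) \leq G$ lies in $\langle L\leq K\rangle$'' is clear. For the converse, suppose some composite $H' = \phi_1^{-1}(L)\cap\cdots$, that is, a chain $H' = A_0 \leq A_1 \leq \cdots \leq A_n = G$ of restricted pairs, lands in the forbidden set. Then the top step $A_{n-1} \leq G$ is itself $\phi^{-1}(L)$ for a homomorphism $\phi\colon G\to K$, and $H \leq H' \leq A_{n-1} \lneq G$.

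I expect the main obstacle to be the meet-density step: both showing that $M_{H\leq G}$ is a genuine transfer system and that the reduction to an endopure pair preserves non-membership in $T$. The closure under restriction is subtle, since restrictions of pairs in $M$ must avoid forbidden targets over all groups. I would first try to prove it by the same ``top step of a composite'' argument used in the incidence step.
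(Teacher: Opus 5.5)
Your architecture differs from the paper's, and it is viable. You are really using the general basic theorem of formal concept analysis: join-dense and meet-dense families indexed by $\mathsf{EP}(\fami)$, with no injectivity needed. That is not \zcref{thm:FTFCA2} as stated, which is about $J(L)$ and $M(L)$. This lets you skip the paper's analysis of minimal (anti)generators. The cost is that you do not learn that endopure transfers index the irreducibles bijectively, but the statement does not need that.

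Several parts are fine. Your join-density induction on $(|G|,[G:H])$ is correct. (Families are not closed under quotients in general, e.g.\ $Q_8/Z(Q_8)\cong C_2^2\notin\Sub(Q_8)$, but you only need $\phi(G)\leq G$.) Your $M_{H\leq G}$ is exactly $\underline{\la H\leq G\ra}$, and the incidence argument works.

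The genuine gap is meet-density. The step ``enlarge $A$ to a maximal $H'$ with $(H'\leq B)\notin T$'' fails in two ways. First, by maximality every $X$ with $H'\lneq X\lneq B$ has $(X\leq B)\in T$, which is the opposite of $T\subseteq M_{H'\leq B}$. Second, and worse, a witness for $(H'\leq B)$ need not exclude $(A\leq B)$, because $M_{H\leq G}$ is closed under restriction but not under shrinking the bottom group. For instance, take $\Sub(S_3)$ with $T$ trivial and $(A\leq B)=(1\leq S_3)$. The choice $H'=\la\tau\ra$ is maximal and already endopure, and $T\subseteq\underline{\la C_2\leq S_3\ra}$. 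Yet $(1\leq S_3)\in\underline{\la C_2\leq S_3\ra}$, since no endomorphism of $S_3$ has a proper kernel containing $\tau$.

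The repair is to replace $(A\leq B)$ only by restrictions of itself. A witness for such a restriction is then a witness for $(A\leq B)$, by restriction-closure of $M_{H\leq G}$. With that rule, your parameters are exactly right. Take a counterexample with $|B|$ minimal, then $|A|$ maximal. Any restriction of it that lies outside $T$ and is smaller in this order has a witness, and that witness also works for $(A\leq B)$.

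\begin{enumerate}
\item Suppose $(X\leq B)\in T$ for some $A\leq X\lneq B$. Then $(A\leq X)\notin T$, and it is such a restriction with smaller top group. Hence $T\cap U_{A\leq B}=\varnothing$, i.e.\ $T\subseteq\underline{\la A\leq B\ra}$. This is the top-group shrinking of \zcref{prop-smallerumbrella}, not an enlargement of $A$.
\item Suppose $\phi$ is non-injective with $A\leq\phi^{-1}(A)\lneq B$.
\begin{itemize}
\item If $\phi^{-1}(A)=A$, use $(\phi(A)\leq\phi(B))$. It is a restriction of $(A\leq B)$ along the inclusion, because $A\cap\phi(B)=\phi(A)$. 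It lies outside $T$, because $(A\leq B)$ is its restriction along $\phi$ (as $\ker\phi\leq A$). Its top group is smaller.
\item Otherwise, use $(\phi^{-1}(A)\leq B)$, the restriction along $\phi$. It lies in $U_{A\leq B}$, so it is outside $T$ by the first step, and it has a larger bottom group.
\end{itemize}
\end{enumerate}

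Hence a minimal counterexample is endopure and is its own witness, which is a contradiction. With this in place your route proves the theorem.
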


This provides us with not only a conceptual understanding of how global transfer systems are constructed, but also a tangible computational tool. Given a family $\fami$, the main task is to classify the endopure transfers appearing in $\fami$. We provide \texttt{GAP} code for this  in the \hyperlink{appendix}{Appendix} so that the interested reader can perform their own computations.

In \zcref{sec:examples} we apply the machinery to obtain new results about global transfer systems in several families. Before this paper, only abelian examples had been explicitly computed, in \cite{abeliantransfers}. Of particular intrigue, as always, is the family of extraspecial $p$-groups:

\begin{thmintro}[\zcref{thm:esfinal}] Let $p$ be prime, and denote by $\mathcal{E}_p$ the family consisting of all extraspecial $p$-groups and their subgroups. Then 
    \[
        |\mathsf{Tr}^\mathsf{Gl}(\mathcal{E}_p)| = \begin{cases}
            5 & p=2 \\
            \infty & p \text{ odd}.
        \end{cases}
    \]
\end{thmintro}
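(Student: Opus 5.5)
The plan is to reduce everything to the main theorem (\zcref{thm:summary}). It gives an isomorphism $\mathsf{Tr}^\mathsf{Gl}(\mathcal{E}_p)\cong\conlat(\mathsf{EP}(\mathcal{E}_p),\mathsf{EP}(\mathcal{E}_p),\leq_{\mathsf{EP}})$, in which distinct endopure pairs (up to isomorphism of pairs) give distinct join-irreducible elements. So it suffices to show two things:
\begin{enumerate}
\item[(a)] for $p=2$, classify the endopure pairs occurring in $\mathcal{E}_2$, compute $\leq_{\mathsf{EP}}$ on them, and count formal concepts;
\item[(b)] for $p$ odd, exhibit infinitely many pairwise non-isomorphic endopure pairs in $\mathcal{E}_p$. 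Infinitely many join-irreducibles already force the lattice to be infinite.
\end{enumerate}

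The basic tool for both parts is the following observation. If $G$ is a non-abelian $p$-group in $\mathcal{E}_p$ and $\phi\colon G\to G$ is non-injective, then $\ker\phi$ is a nontrivial normal subgroup. When $G$ is extraspecial, $\ker\phi$ therefore meets, hence contains, $Z(G)=\Phi(G)=G'$. So $\phi$ factors through the elementary abelian group $G/Z(G)$, and its image is an elementary abelian subgroup of $G$.

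For abelian members of the family we use \zcref{prop:abendo}: the only abelian endopure pairs are $C_{p^{n-1}}\lneq C_{p^n}$. Since extraspecial groups and their subgroups have exponent at most $p^2$, this leaves only $1\lneq C_p$ and $C_p\lneq C_{p^2}$.

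For $p=2$, the non-abelian subgroups of extraspecial $2$-groups are central products of copies of $D_8$ and $Q_8$, possibly with $C_4$ or elementary abelian factors. I will argue case by case, using the factorisation through $G/Z$, that such a $G$ has no endopure subgroups except $C_4\lneq Q_8$. In $Q_8$ the unique involution forces every non-injective endomorphism to land in $Z(Q_8)\leq C_4$, so $\phi^{-1}(C_4)=Q_8$; hence $C_4\lneq Q_8$ is endopure. In larger groups there is enough room in $G/Z$ to build a non-injective $\phi$ with $H\leq\phi^{-1}(H)\lneq G$, by mapping onto an elementary abelian subgroup chosen to meet $H$ in the right way. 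With the resulting short list ($1\lneq C_2$, $C_2\lneq C_4$, $C_4\lneq Q_8$) I then compute $\leq_{\mathsf{EP}}$ directly from the homomorphism criterion in \zcref{thm:summary} and enumerate the concepts of this small context, expecting exactly $5$.

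For $p$ odd, I will look for an explicit family $H_n\lneq G_n$ with $G_n$ extraspecial of order $p^{2n+1}$ (or a suitable subgroup of one). The non-injective endomorphisms of $G_n$ correspond to linear maps from $V=G_n/Z\cong\mathbb{F}_p^{2n}$ into elementary abelian subgroups $A$, which have order at most $p^{n+1}$. This dimension constraint is what should make endopurity possible. Maximal subgroups are not endopure: one can choose $A$ with $A\cap H=Z$ and realise $\phi^{-1}(H)=H$. So I will first try $H$ with $H\cap Z=1$ and $HZ$ a maximal elementary abelian (Lagrangian) subgroup. Any non-injective $\phi$ with $\phi^{-1}(H)\geq H$ then has $\phi^{-1}(H)\geq HZ$. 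I would show, via the symplectic form on $V$ and the fact that $\phi(G)$ must be isotropic, that the only possibilities are $\phi^{-1}(H)=G$.

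This verification is the main obstacle. If it fails, I will instead search, with the appendix \texttt{GAP} code for small $n$, for a pattern of endopure pairs among the exponent-$p^2$ groups $p^{1+2n}_-$ or their subgroups, and then prove that pattern in general. Either way, once infinitely many non-isomorphic endopure pairs are found, \zcref{thm:summary} gives $|\mathsf{Tr}^\mathsf{Gl}(\mathcal{E}_p)|=\infty$.
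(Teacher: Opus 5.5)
Your reduction to \zcref{thm:summary} is the paper's, but both halves have a genuine gap.

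\textbf{The case $p$ odd.} Your primary candidate cannot work, because no $H$ with $H\cap Z=1$ is endopure. Choose a maximal subgroup $M\geq HZ$, a surjection $q\colon G\to G/M\cong C_p$, and a generator $z$ of $Z$, and put $\phi(g)=z^{q(g)}$. This $\phi$ is non-injective and $\phi^{-1}(H)=\phi^{-1}(H\cap Z)=M$, so $H\leq\phi^{-1}(H)\lneq G$. Since $\phi(G)=Z$ maps to $0$ in $V$, no symplectic or isotropy argument can exclude it. The same construction, using any element of order $p$ outside $H$, shows that endopure subgroups of $p$-groups contain all elements of order $p$ (\zcref{prop:order2}).

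Your claim that maximal subgroups are never endopure is also false. The ``dimension constraint'' is a red herring: $p_+^{1+2n}$ satisfies it but, having exponent $p$, has no endopure subgroups at all.

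The missing idea is to push your own correct observation one step further. Every non-injective endomorphism has elementary abelian image, so by \zcref{rem:reform} any proper $H$ containing all elements of order $p$ is endopure. For $p$ odd, $G$ has class $2$ with commutators of order $p$, so $(xy)^p=x^py^p[y,x]^{\binom p2}=x^py^p$ and $\{x\mid x^p=1\}$ is a subgroup. In $p_-^{1+2n}$, which has exponent $p^2$, this subgroup is proper (indeed maximal), hence endopure. That gives an endopure pair with top group of order $p^{1+2n}$ for every $n$, which is exactly what you need. Your \texttt{GAP} fallback might lead you there, but as written it is not an argument.

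\textbf{The case $p=2$.} Your list omits $Z(Q_8)\lneq Q_8$. It is endopure by exactly the argument you give for $C_4\lneq Q_8$: every non-injective endomorphism of $Q_8$ lands in $Z(Q_8)$, so $\phi^{-1}(Z(Q_8))=Q_8$. With only your three pairs, the context is a $3\times 3$ matrix whose concept lattice is a $4$-element chain, so your computation would give $4$, not $5$. The four pairs $1\leq C_2$, $C_2\leq C_4$, $C_2\leq Q_8$, $C_4\leq Q_8$ give the chain $[4]$.

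The step ``there is enough room in $G/Z$'' is also only a gesture. The clean route is again \zcref{prop:order2}:
\begin{enumerate}
\item Non-abelian subgroups of extraspecial $2$-groups have the form $E\times A$ or $(E\circ C_4)\times A$.
\item All of these except $Q_8\times C_2^n$ are generated by involutions, so they have no endopure subgroups.
\item For $Q_8\times C_2^n$ with $n\geq 1$, any endopure candidate contains all involutions, hence has the form $L\times C_2^n$. The non-injective endomorphism $(q,a)\mapsto(q,1)$ then satisfies $\phi^{-1}(L\times C_2^n)=L\times C_2^n$, so the candidate is not endopure.
\end{enumerate}
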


Using the computational tools facilitated by the theory in this paper, we are able to compute statistics regarding the most omnipresent families of groups, $\mathcal{G}_n$, containing the finite groups of order $\leq n$. Taking the limit over all these families (in an explicit sense discussed in \zcref{subsec:filts}) we can conjecture facts about the family $\mathcal{G}$ of \textit{all} finite groups. For a family $\fami$, the construction of the formal context $(\mathsf{EP}(\fami), \mathsf{EP}(\fami), {\leq_{\mathsf{EP}}})$ gives rise to a binary matrix $C(\fami)$ whose entry corresponding to $((L \leq K), (H \leq G))$ in $\mathsf{EP}(\fami) \times \mathsf{EP}(\fami)$ is $1$ if $(L \leq K) \leq_{\mathsf{EP}} (H \leq G)$, and $0$ otherwise. The density $\delta(\fami)$ is the number of ones in $C(\fami)$ divided by $\lvert \mathsf{EP}(\fami) \rvert^2$. This density allows us to discuss bounds for the size of the corresponding lattice. In many situations of interest, the limiting density over some filtered family is 1 (see \cite{balchinspitz} and \zcref{subsec:ab}). Our computational evidence leads us to the following, surprising conjecture:

\begin{conjintro}[\zcref{subsec:allgroups}]
Let $\mathcal{G}_n$ be the family of finite groups of order $\leq n$. Then $0.5 < \lim_n \delta(\mathcal{G}_n) < 1$.
\end{conjintro}

\subsection*{Acknowledgements}

MB is supported by the European Research Council (ERC) under Horizon Europe (grant No. 101042990). SB and MB would like to thank the Max Planck Institute for Mathematics in Bonn for its hospitality and financial support. No generative AI was used for the content or writing of this paper.

\section{Recollections}

\subsection{Global transfer systems}

We begin with a brief recap of the theory of global transfer systems from \cite{abeliantransfers}. For an introduction to the wider concepts surrounding global homotopy theory, the reader may consult \cite{global_book} or \cite{LNP} (although note that here we will work with arbitrary families of finite groups, as opposed to global families of compact Lie groups).

\begin{defi}
    A \emph{family} of finite groups $\mathcal{F}$ is a non-empty collection of finite groups closed under subgroups and isomorphisms.
\end{defi}

\begin{ex}
    Let $G$ be a finite group. Then $\Sub(G)$, the collection of all groups isomorphic to a subgroup of $G$, is a family.
\end{ex}

\begin{defi}
\label{defi:gts}
    Let $\mathcal{F}$ be a family. A \emph{global $\mathcal{F}$-transfer system} $T$ is a binary relation $\leqslant_T$ on $\mathcal{F}$ such that:
        \begin{enumerate}
            \item The relation $\leqslant_T$ is a partial order that refines the subgroup relation on $\mathcal{F}$.
            \item The relation $\leqslant_T$ is closed under pullback/restriction along arbitrary homomorphisms. That is, for any $G,H,K \in \mathcal{F}$ and any homomorphism $\phi \colon G \to H$, if $K \leqslant_T H$, then $\phi^{-1}(K) \leqslant_T G$.
        \end{enumerate}
\end{defi}

\begin{defi}
    Let $\mathcal{F}$ be a family. We will call a pair $H \leqslant G$ with $H, G \in \mathcal{F}$ a \emph{transfer}, and we will often denote such a transfer with $(H \leqslant G)$ for clarity. We will write $\mathbb{I}(\mathcal{F})$ for the set of transfers in $\fami$. For a global $\mathcal{F}$-transfer system $T$, we will write $(H \leqslant G) \in T$ if $H \leqslant_T G$. In this sense, a global transfer system is a set of transfers satisfying certain conditions, which justifies the name.

    Given two transfers, we will say that $(H \leq G)$ is a \emph{restriction} of $(H' \leq G')$ if there is a homomorphism $\phi \colon G \to G'$ with $\phi^{-1}(H')=H$. Given $G,H,K \in \mathcal{F}$ with $K \leq H \leq G$ we will say that $(K \leq G)$ is a \emph{composition} of $(K \leq H)$ and $(H \leq G)$. Thus, a global $\fami$-transfer system is a set of transfers in $\fami$ closed under composition and restriction.

    We will say that two transfers $(H \leq G)$ and $(H' \leq G')$ are isomorphic if there exists an isomorphism $\phi \colon G \to G'$ with $\phi(H) = H'$. Given a global $\mathcal{F}$-transfer system $T$, it is clear by the restriction condition of \zcref{defi:gts} that if two transfers are isomorphic, then one belongs to $T$ if and only if the other does. This is not true if one merely knows that $G \cong G'$ and $H \cong H'$ abstractly, as can be seen by looking at different copies of $C_2$ within the dihedral group $D_6$.
\end{defi}

\begin{rem}
    We denote the collection of all global $\mathcal{F}$-transfer systems by $\mathsf{Tr}^\mathsf{Gl}(\mathcal{F})$. This is a complete lattice where the order is given by containment, and meet is given by intersection. The join of a set of global transfer systems is the smallest global transfer system that contains their union, which is more concretely the transitive closure of their union (i.e., its closure under composition of transfers).
\end{rem}

\begin{rem}
    Let $G$ be a finite group. We will often write $\mathsf{Tr}^\mathsf{Gl}(G)$ for $\mathsf{Tr}^\mathsf{Gl}(\Sub(G))$. Global transfer systems for the family $\Sub(G)$ are in particular equivariant transfer systems for $G$ in the usual sense (see for example \cite[\S 2]{BBR}) with the added requirement that they are closed under restriction along arbitrary homomorphisms instead of just under conjugation and restriction to subgroups. This observation provides a lattice embedding $\mathsf{Tr}^\mathsf{Gl}(G) \hookrightarrow \mathsf{Tr}(G)$, see \cite[\S7]{globalNinf}. 
\end{rem}

\begin{ex}
    Let $G = C_{p^n}$ for $n \geqslant 1$ and $p$ prime. Then by \cite[Theorem~25]{BBR} we have $\mathsf{Tr}(G) \cong \mathcal{A}_{n+1}$, the $(n+1)$-associahedron, and thus $|\mathsf{Tr}(G)|$ is given by the $(n+1)$st Catalan number. In stark contrast, we have that $\mathsf{Tr}^\mathsf{Gl}(G) \cong [n]$, see \cite[Example 7.3]{globalNinf}.
\end{ex}

We will later need the following simple observation regarding restriction of transfers.

\begin{lemm}
\label{lemm:res-index}
    If $(H \leq G)$ is a restriction of $(H' \leq G')$ we have $[G' \colon H'] \geq [G \colon H]$.
\end{lemm}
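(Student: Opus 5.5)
The plan is to compare cosets of $H$ in $G$ with cosets of $H'$ in $G'$ via the homomorphism $\phi \colon G \to G'$ witnessing the restriction, so that $\phi^{-1}(H') = H$. Consider the map of left coset sets
\[
\bar\phi \colon G/H \to G'/H', \qquad gH \mapsto \phi(g)H'.
\]
The first step is to check that $\bar\phi$ is well defined. If $gH = g'H$, then $g^{-1}g' \in H$, and since $H = \phi^{-1}(H')$ this gives $\phi(g)^{-1}\phi(g') = \phi(g^{-1}g') \in H'$, so $\phi(g)H' = \phi(g')H'$.

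The second step is injectivity, which is where the hypothesis is used in the converse direction. Suppose $\phi(g)H' = \phi(g')H'$. Then $\phi(g^{-1}g') \in H'$, so $g^{-1}g' \in \phi^{-1}(H') = H$, and hence $gH = g'H$.

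Since $\bar\phi$ is injective, we get $[G \colon H] = |G/H| \leq |G'/H'| = [G' \colon H']$, which is the claim. No step here is a real obstacle; the only point needing care is that the equality $\phi^{-1}(H') = H$ (rather than a mere containment) is exactly what makes the induced map both well defined and injective. As a consistency check, this is the familiar fact that $\bar\phi$ identifies $G/H$ with the $G$-orbit of the coset $eH'$ under the action of $G$ on $G'/H'$ through $\phi$, whose stabilizer is $\phi^{-1}(H') = H$.
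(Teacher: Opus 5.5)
Your proof is correct and is the same argument as the paper's: the map $gH \mapsto \phi(g)H'$ from $G/H$ to $G'/H'$ is injective because $\phi^{-1}(H') = H$. You also check well-definedness explicitly, which the paper leaves implicit.
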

\begin{proof}
    There exists a homomorphism $\phi \colon G \to G'$ with $\phi^{-1}(H')=H$. For $g, g' \in G$, if $\phi(g)^{-1} \phi(g') \in H'$ then $g^{-1} g' \in \phi^{-1}(H)$. Thus the function that $\phi$ induces on cosets $G/H \to G'/H'$ is injective.
\end{proof}

\subsection{Formal concept analysis}\label{subsec:fca}

We now briefly recall the theory of formal concept analysis that we will need for our application. We follow the exposition of \cite{balchinspitz}.

\begin{defi}
    A \emph{formal context} is an ordered triple $(X, Y, R)$ where $X$ and $Y$ are non-empty sets and $R \subset X \times Y$ is a relation. Elements $x \in X$ are called \emph{objects}, and elements $y \in Y$ are called \emph{attributes}.
\end{defi}

\begin{rem}
    We will usually represent a formal context as an $|X| \times |Y|$ binary matrix denoted $C$, with $C(x,y) = 1$ if $xRy$ and 0 otherwise.
\end{rem}

\begin{defi}
    Let $(X, Y, R)$ be a formal context. Define two operators $(-)^\uparrow \colon \mathcal{P}(X) \to \mathcal{P}(Y)$ and $(-)^\downarrow \colon \mathcal{P}(Y) \to \mathcal{P}(X)$ as
    \[
        A^\uparrow = \{y \in Y \mid \text{for each } x \in A,\, (x,y) \in R\} \quad \text{and} \quad 
        B^\downarrow = \{x \in X \mid \text{for each } y \in B,\, (x,y) \in R\}.
    \]
\end{defi}

\begin{defi}\label{defn:ofconcept}
    Let $(X, Y, R)$ be a formal context. A \emph{formal concept} in $(X,Y,R)$ is a pair $A \subset X$, $B \subset Y$ such that $A^\uparrow = B$ and $B^\downarrow = A$.

    Denote by $\conlat(X, Y, R)$ the collection of all formal concepts of $(X, Y, R)$. It is naturally partially ordered by $(A_1,B_1) \leq (A_2,B_2) \iff A_1 \subset A_2$ (equivalently if and only if $B_1 \supseteq B_2$).
\end{defi}

\begin{thm}[Fundamental theorem of Formal Concept Analysis, Part I \cite{Wille1982Restructuring}]\label{thm:FTFCA1}
    Let $(X, Y, R)$ be a formal context. Then $\conlat(X, Y, R)$ is a complete lattice with meet and join given by
    \[
        \bigwedge_{j \in J} (A_j,B_j) = (\bigcap_{j \in J} A_j, (\bigcup_{j \in J}B_j)^{\downarrow \uparrow}) \quad \text{and} \quad
        \bigvee_{j \in J} (A_j,B_j) = ((\bigcup_{j \in J}A_j)^{\uparrow \downarrow}, \bigcap_{j \in J} B_j)
    \]
\end{thm}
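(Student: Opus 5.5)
We prove this by the standard Galois-connection argument. The first observation is that the pair of operators $(-)^\uparrow$ and $(-)^\downarrow$ is antitone: $A_1 \subset A_2$ implies $A_2^\uparrow \subset A_1^\uparrow$, and dually for $(-)^\downarrow$. They are also adjoint in the sense that $B \subset A^\uparrow$ if and only if $A \subset B^\downarrow$, since both conditions say that $A \times B \subset R$.

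From this we get that $A \subset A^{\uparrow\downarrow}$ and $B \subset B^{\downarrow\uparrow}$. Combining these with antitonicity gives $A^{\uparrow\downarrow\uparrow} = A^\uparrow$ and $B^{\downarrow\uparrow\downarrow}=B^\downarrow$. Thus $(-)^{\uparrow\downarrow}$ and $(-)^{\downarrow\uparrow}$ are closure operators. The formal concepts are then exactly the pairs $(A, A^\uparrow)$ with $A$ closed, and equivalently the pairs $(B^\downarrow, B)$ with $B$ closed.

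In particular, the extents of formal concepts are the closed sets of a closure operator on $\mathcal{P}(X)$. This justifies the equivalence $A_1 \subset A_2 \iff B_1 \supseteq B_2$ in \zcref{defn:ofconcept}:
\begin{itemize}
\item if $A_1 \subset A_2$, then $B_2 = A_2^\uparrow \subset A_1^\uparrow = B_1$;
\item conversely, if $B_2 \subset B_1$, then $A_1 = B_1^\downarrow \subset B_2^\downarrow = A_2$.
\end{itemize}

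For the meet, we check directly that $(\bigcup B_j)^\downarrow = \bigcap B_j^\downarrow = \bigcap A_j$, using that $(-)^\downarrow$ turns unions into intersections. So $A := \bigcap A_j$ is of the form $B^\downarrow$, hence closed, and $(A, A^\uparrow)$ is a concept. Here $A^\uparrow = (\bigcup B_j)^{\downarrow\uparrow}$, which matches the stated formula. It is a lower bound because $A \subset A_j$ for each $j$. It is the greatest lower bound because any concept $(A',B')$ below every $(A_j,B_j)$ satisfies $A' \subset \bigcap A_j = A$.

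The join is dual, with the roles of $X$ and $Y$ swapped. We have $(\bigcup A_j)^\uparrow = \bigcap A_j^\uparrow = \bigcap B_j$, so $(\bigcup A_j)^{\uparrow\downarrow}$ is the extent paired with $\bigcap B_j$. This concept is the least upper bound by the same argument, using the equivalent ordering by reverse inclusion of intents.

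Completeness follows because arbitrary (possibly empty) index sets are allowed. The empty meet gives $(X, X^\uparrow)$, which is the top element, and the empty join gives $(\emptyset^{\uparrow\downarrow}, Y)$, which is the bottom element.

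The only step needing care is the identity $A^{\uparrow\downarrow\uparrow}=A^\uparrow$, together with checking that the displayed formulas really produce pairs satisfying both equations $A^\uparrow = B$ and $B^\downarrow = A$. Everything else is a formal consequence of the Galois connection.
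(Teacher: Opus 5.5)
Your proposal is correct and is the standard Galois-connection proof. The paper does not prove this statement itself but cites Wille, and that argument runs as yours does: the closure operators $(-)^{\uparrow\downarrow}$ and $(-)^{\downarrow\uparrow}$, together with the identities $(\bigcup_j B_j)^\downarrow = \bigcap_j B_j^\downarrow$ and $(\bigcup_j A_j)^\uparrow = \bigcap_j A_j^\uparrow$, with your handling of the empty index set and of the extent/intent order equivalence also right.
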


\begin{defi}
    For a context $(X,Y,R)$, an object $x \in X$ is reducible if there is a (possibly empty) set $S \subset X \setminus \{ x\}$ such that $\{x\}^\uparrow = S^\uparrow$, and dually for $y \in Y$. The context $(X,Y,R)$ is \emph{reduced} if no object of $X$ or $Y$ is reducible. In other words, a context is reduced if no row (resp., column) can be obtained as a multiplication of a set of different rows (resp., columns) in a matrix representation of $R$.
\end{defi}

Given any context, one can \emph{reduce} it by iteratively removing rows and columns until a reduced context is obtained. We shall call this the \emph{reduction} of the context. One sees that the concept lattices for a context and its reduction are isomorphic, see \cite[Proposition 14]{FCAbook}.

Recall that for $L$ a lattice, an element $x \in L$ is said to be \emph{join-irreducible} (resp., \emph{meet-irreducible}) if, for all $X \subset L$, $x = \bigvee X$ implies $x \in X$ (resp., $x = \bigwedge X$ implies $x \in X$). We denote by $J(L)$ the set of join-irreducible elements of $L$, and by $M(L)$ the collection of meet-irreducible elements of $L$. 

\begin{thm}[The Fundamental Theorem of Formal Concept Analysis, Part II \cite{Wille1982Restructuring}]\label{thm:FTFCA2}
    Let $L$ be a complete lattice such that each element of $L$ can be expressed both as a join of join-irreducible elements and as a meet of meet-irreducible elements. Then there is a canonical isomorphism $L \to \conlat(J(L), M(L), {\leq})$ given by
    \[z \mapsto (\{x \in J(L) \mid x \leq z\},\{y \in M(L) \mid z \leq y\}).\]

    Moreover, the context $(J(L), M(L), {\leq})$ is reduced, and is the unique (up to isomorphism) reduced context with concept lattice isomorphic to $L$.
\end{thm}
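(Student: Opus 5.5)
The plan is to verify directly that the displayed map $z \mapsto (A_z, B_z)$, with $A_z=\{x\in J(L)\mid x\leq z\}$ and $B_z=\{y\in M(L)\mid z\leq y\}$, lands in $\conlat(J(L),M(L),\leq)$ and is an order isomorphism. Then I would check reducedness and, last, uniqueness.

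\emph{Well-definedness.} By hypothesis $z=\bigvee A_z$ and $z=\bigwedge B_z$. For $y\in M(L)$ we have $y\in A_z^\uparrow$ iff $x\leq y$ for all $x\in A_z$, iff $\bigvee A_z\leq y$, iff $z\leq y$. Hence $A_z^\uparrow=B_z$, and dually $B_z^\downarrow=A_z$.

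\emph{Isomorphism.} Injectivity is immediate, since $z=\bigvee A_z$ is recovered from the concept. For surjectivity, take a concept $(A,B)$ and set $z=\bigwedge B$. Then $A=B^\downarrow=\{x\in J(L)\mid x\leq \bigwedge B\}=A_z$, and $B=A^\uparrow=A_z^\uparrow=B_z$ by the previous step. Monotonicity in both directions holds because $A_{z}\subseteq A_{z'}$ iff $\bigvee A_z\leq\bigvee A_{z'}$. So the map is an order isomorphism, and hence a lattice isomorphism, using \zcref{thm:FTFCA1} for the lattice structure on concepts.

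\emph{Reducedness.} Suppose $x\in J(L)$ were reducible, so $\{x\}^\uparrow=S^\uparrow$ for some $S\subseteq J(L)\setminus\{x\}$. Then $x$ and $\bigvee S$ lie below exactly the same meet-irreducibles. Since each element is the meet of the meet-irreducibles above it, this gives $x=\bigvee S$ with $x\notin S$, contradicting join-irreducibility. The case $S=\emptyset$ forces $x$ to be the bottom element, which equals $\bigvee\emptyset$ and so is not join-irreducible. The argument for $M(L)$ is dual.

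\emph{Uniqueness (main obstacle).} Let $(X,Y,R)$ be reduced with $\conlat(X,Y,R)\cong L$. Consider the object concepts $\gamma(x)=(\{x\}^{\uparrow\downarrow},\{x\}^\uparrow)$ and the attribute concepts $\mu(y)=(\{y\}^\downarrow,\{y\}^{\downarrow\uparrow})$. By \zcref{thm:FTFCA1}, every concept $(A,B)$ is $\bigvee_{x\in A}\gamma(x)$. Hence every join-irreducible concept is some $\gamma(x)$.

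Conversely, if $\gamma(x)=\bigvee_{s\in S}\gamma(s)$ with $x\notin S$, then comparing intents gives $\{x\}^\uparrow=S^\uparrow$. This contradicts reducedness, so each $\gamma(x)$ is join-irreducible. The same argument shows $\gamma$ is injective: if $\gamma(x)=\gamma(x')$ with $x\neq x'$, take $S=\{x'\}$.

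Thus $\gamma\colon X\to J(L)$ is a bijection, and dually $\mu\colon Y\to M(L)$ is a bijection. Finally, $x\,R\,y$ iff $y\in\{x\}^\uparrow$ iff $\gamma(x)\leq\mu(y)$. So $(X,Y,R)\cong(J(L),M(L),\leq)$. The care needed here is in tracking the extent/intent bookkeeping correctly; the rest is routine.
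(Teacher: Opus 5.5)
The paper does not prove this theorem. It is quoted from Wille, in the formulation of Ganter--Wille and Balchin--Spitz, so there is no in-paper argument to compare yours against. Your direct proof is correct, and it is essentially the standard one. The first half is the basic theorem on concept lattices, specialised to the join-dense set $J(L)$ and the meet-dense set $M(L)$. The uniqueness half is the usual identification of the join-irreducible concepts of a reduced context with its object concepts, and of the meet-irreducible ones with its attribute concepts.

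Two small steps should be written out.

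\emph{First step.} ``By hypothesis $z=\bigvee A_z$'' needs a one-line justification. If $z=\bigvee S$ with $S\subseteq J(L)$, then $S\subseteq A_z$, so $z=\bigvee S\leq\bigvee A_z\leq z$. The same applies to $z=\bigwedge B_z$.

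\emph{Second step.} In the uniqueness part you must test join-irreducibility of $\gamma(x)$ against an arbitrary family $\mathcal{C}$ of concepts with $\gamma(x)=\bigvee\mathcal{C}$, not only against families of object concepts. Your join-density remark reduces this to your case.
\begin{itemize}
\item If $x$ lies in the extent of some $C\in\mathcal{C}$, then $\gamma(x)\leq C\leq\gamma(x)$, so $\gamma(x)\in\mathcal{C}$.
\item Otherwise, let $S$ be the union of the extents of the members of $\mathcal{C}$. Then $x\notin S$ and $\gamma(x)=\bigvee_{s\in S}\gamma(s)$, and your comparison of intents gives $\{x\}^\uparrow=S^\uparrow$, contradicting reducedness.
\end{itemize}

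Your separate treatment of $S=\varnothing$ in the reducedness step is harmless but redundant. The paper's definition of join-irreducible already quantifies over all $X\subset L$, including the empty set. With these two steps written out, the argument is complete.
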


\zcref{thm:FTFCA2} thus allows us to reconstruct a lattice $L$ purely from the data of the join- and meet-irreducible elements and the inclusions between them. This point of view was key in \cite[Theorem~A]{balchinspitz} where it was proved that for the lattice $\mathsf{Tr}(G)$ of equivariant transfer systems for a group $G$, the irreducible elements, along with the inclusion relation, admits a simple description. 

At this stage, the point of the current paper is clear, we wish to apply the Fundamental Theorem of Formal Concept Analysis to the \textit{global} setting.

\begin{rem}
    Unlike in \cite{balchinspitz}, our lattices need not be finite. As such, we will need to verify that the lattices satisfy the technical condition of \zcref{thm:FTFCA2}, which we will do in \zcref{subsec:condition}.
\end{rem}

\section{Endopurity of subgroups}
\label{sec:endopure}

\subsection{First definitions and examples}

In our hunt for a characterisation of the irreducible global transfer systems in \zcref{sec:irreducibles}, we arrive to a condition on subgroups of finite groups. We dedicate this section to an initial exploration into this seemingly new notion. We begin with the definition:

\begin{defi}\label{defi:endopure}
    Let $G$ be a finite group, and let $H \lneq G$. Then $H$ is \emph{endopure} in $G$ if there are no non-injective endomorphisms $\phi$ of $G$ with $H \leq \phi^{-1}(H) \lneq G$. 
    
    We denote by $\mathsf{EP}(G)$ the collection of endopure subgroups of $G$, where we identify $H \sim K$ whenever there exists $\alpha \in \mathrm{Aut}(G)$ such that $\alpha(H) = K$.
    \end{defi}

\begin{rem}\label{rem:reform}
    \zcref{defi:endopure} can equivalently be restated as saying that $H$ is endopure if for any non-injective $\phi \in \mathrm{End}(G)$ with $\phi(H) \leq H$, we have that $\phi(G) \leq H$. 
\end{rem}

\begin{defi}\label{defi:endocore}
    Let $G$ be a finite group. Then the \emph{endocore} of $G$ is the following subgroup:
    \[
    \Omega(G) \coloneq \la \phi(G) \mid \phi \in \mathrm{End}(G),\,\phi \text{ non-injective} \ra \leq G.
    \]
\end{defi} 

\begin{lemm}\label{lem:endocore}
    Let $\Omega(G) \leq H \lneq G$. Then $H$ is endopure.
\end{lemm}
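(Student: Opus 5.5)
The plan is to verify the reformulation in \zcref{rem:reform} directly. Fix a non-injective endomorphism $\phi$ of $G$, and suppose $H \leq \phi^{-1}(H)$, which is the same as $\phi(H) \leq H$. We must show that $\phi^{-1}(H) = G$, so that $H \leq \phi^{-1}(H) \lneq G$ cannot hold.

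By \zcref{defi:endocore}, the image $\phi(G)$ is one of the generators of $\Omega(G)$, because $\phi$ is non-injective. Hence
\[
\phi(G) \leq \Omega(G) \leq H,
\]
using the hypothesis $\Omega(G) \leq H$. It follows that $\phi^{-1}(H) \supseteq \phi^{-1}(\phi(G)) = G$, so $\phi^{-1}(H) = G$.

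This shows that no non-injective endomorphism satisfies $H \leq \phi^{-1}(H) \lneq G$. Since $H \lneq G$ by hypothesis, $H$ is endopure by \zcref{defi:endopure}. We never even needed the condition $\phi(H) \leq H$.

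There is no real obstacle here. The only point to check is that $\Omega(G)$ is generated by the images of all non-injective endomorphisms. This is exactly what makes every such image land inside $H$.
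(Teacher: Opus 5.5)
Your proof is correct and takes the same approach as the paper, which simply notes that the lemma follows immediately from the reformulation of endopurity in \zcref{rem:reform}. You spell out the one-line reason behind that: every non-injective endomorphism satisfies $\phi(G) \leq \Omega(G) \leq H$, so $\phi^{-1}(H) = G$.
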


\begin{proof}
    This immediately follows from the equivalent description of endopurity of \zcref{rem:reform}.
\end{proof}

\begin{rem}
    The converse to \zcref{lem:endocore} is false. For an explicit example take $G = S_3$. Then for each subgroup $K = \la \tau \ra$ for $\tau$ a transposition there is an endomorphism $\phi$ with $\phi(S_3) = K$. Since $S_3$ is generated by transpositions, $\Omega(S_3)=S_3$. However, $K = \la \tau \ra$ is endopure in $S_3$. We will see a family where the converse does hold in \zcref{thm:endoiffcore}.
\end{rem}

\begin{ex}\label{ex:cpn}
    Let $G = C_{p^n}$. Then the only endopure subgroup is $C_{p^{n-1}}$. Indeed, suppose $G = \la g \ra$, and consider the non-injective endomorphism $\phi \colon G \to G$ given by $\phi(g) = g^p$. Then for $H_i \coloneq \la g^{p^{n-i}} \ra \leq G$, we have
    \[
    \phi^{-1}(H_i) = \begin{cases} H_{i+1} & i < n-1 \\G & i=n-1.\end{cases}
    \]
    For $i < n-1$ we as such get $H_i \leq \phi^{-1}(H_i) = H_{i+1} \lneq G$, and as such $H_i$ is not endopure. When $i=n-1$, one observes that $\Omega(G) = H_{n-1}$, and as such, it is endopure.
    
    In fact, the main result of this section, \zcref{prop:abendo}, is that in the abelian case this is \textit{all} that can happen, i.e., for any other abelian group $G$, $\mathsf{EP}(G) = \varnothing$.
\end{ex}

\begin{ex}\label{ex:SL2F5}
    It is clear that if $G$ is a simple group, then any proper subgroup of $G$ is endopure as $\Omega(G) = 1$. The converse, however, need not be true. For example, the trivial subgroup 1 is endopure if and only if $G$ has no non-trivial non-injective endomorphisms. An example of such a group which is not simple is $\mathrm{SL}_2(\mathbb{F}_p)$ for $p \geq 5$. We further note that any such group must be perfect.
\end{ex}

\begin{rem}
\label{rem:s6}
    We give an explicit example to showcase that identifying endopure subgroups of $G$ up to automorphism of $G$, as opposed to abstract isomorphism, makes a difference.

    Let $G = S_6$. Recall that $S_6$ is unique among the symmetric groups, as it possesses an outer automorphism, which we will denote $\sigma$. Consider $V_4=C_2^2$. There are five conjugacy classes of $V_4$ in $S_6$:
    \begin{enumerate}[align = left]
        \item[Type I:] $\langle (i_1i_2)(i_5i_6), (i_3i_4)(i_5i_6)  \rangle$,
        \item[Type II:] $\langle (i_1i_3)(i_2i_4), (i_1i_2)(i_3i_4) \rangle$,
        \item[Type III:] $\langle (i_1i_2)(i_5i_6), (i_3i_4) \rangle$,
        \item[Type IV:] $\langle (i_1i_2)(i_3i_4), (i_3i_4) \rangle$,
        \item[Type V:] $\langle (i_1i_3)(i_2i_4), (i_1i_2)(i_3i_4)(i_5i_6) \rangle$.
    \end{enumerate}
    Among these, Types III, IV, and V are endopure. The outer automorphism $\sigma$ swaps the conjugacy classes of Type IV and V, while fixing those of Type III. As such, there are two distinct copies of $V_4$ in $\mathsf{EP}(S_6)$. If we were to just take the subgroups up to abstract isomorphism, we would only have one copy of $V_4$. As such we would not correctly identify enough join- and meet-irreducible elements of the lattice of global transfer systems for $\Sub(S_6)$, and thus miss some global transfer systems for this family.
\end{rem}

\begin{rem}
    From explicitly computing small examples by hand, the authors initially conjectured that the required condition would be that $H$ is ``good'' if there are no proper normal subgroups of $G$ containing $H$. However when trying to prove results regarding the irreducible elements of $\mathsf{Tr}^\mathsf{Gl}(\fami)$ the role of endopurity became clear, as described in the introduction. This other condition is incomparable to endopurity:
    \begin{itemize}
        \item $C_2 \leq \mathrm{SL}_2(\mathbb{F}_5)$ is a normal subgroup, and thus the trivial subgroup is not ``good''. However, we have seen in \zcref{ex:SL2F5} that the trivial subgroup is endopure.
         \item Let $G = C_3 \ltimes S_3$ where the action is via $S_3/C_3 = C_2$. Let $H = C_2$. Then there are no normal proper subgroups of $G$ that contain $H$, and thus it is ``good'' in the above sense. However, $H \leq G$ is not endopure as we can choose a homomorphism $\phi$ of the form $G \to G/C_3 \cong S_3 \hookrightarrow G$ such that the preimage of our chosen $H$ is a copy of $S_3$ that contains it.
    \end{itemize}
    
    For the convenience of the reader we display the subgroup lattices of both the groups discussed above in \zcref{fig:subgroups}.
\end{rem}

\begin{figure}[h!]
\centering
    \begin{tikzpicture}[scale=1.0,sgplattice]
     
  \node[char] at (2,0) (1) {\gn{C1}{1}};
  \node[char] at (2,0.803) (2) {\gn{C2}{C_2}};
  \node at (3.88,1.76) (3) {\gn{C3}{C_3}};
  \node at (0.125,1.76) (4) {\gn{C5}{C_5}};
  \node at (2,1.76) (5) {\gn{C4}{C_4}};
  \node at (3.88,2.97) (6) {\gn{C6}{C_6}};
  \node at (0.125,2.97) (7) {\gn{C10}{C_{10}}};
  \node at (2,2.97) (8) {\gn{Q8}{Q_8}};
  \node at (3.88,4.19) (9) {\gn{Dic3}{{\rm Dic}_3}};
  \node at (0.125,4.19) (10) {\gn{Dic5}{{\rm Dic}_5}};
  \node at (2,4.19) (11) {\gn{SL(2,3)}{{\rm SL}_2({\mathbb F}_3)}};
  \node[char] at (2,5.14) (12) {\gn{SL(2,5)}{{\rm SL}_2({\mathbb F}_5)}};
  \draw[lin] (1)--(2) (1)--(3) (1)--(4) (2)--(5) (2)--(6) (3)--(6) (2)--(7)
     (4)--(7) (5)--(8) (5)--(9) (6)--(9) (5)--(10) (7)--(10) (6)--(11)
     (8)--(11) (9)--(12) (10)--(12) (11)--(12);
  \node[cnj=3] {10};
  \node[cnj=4] {6};
  \node[cnj=5] {15};
  \node[cnj=6] {10};
  \node[cnj=7] {6};
  \node[cnj=8] {5};
  \node[cnj=9] {10};
  \node[cnj=10] {6};
  \node[cnj=11] {5};
\end{tikzpicture}
\qquad
\begin{tikzpicture}[scale=1.0,sgplattice, xscale=0.75]
\node at (0.125,-1) (dud) { };
  \node[char] at (4.25,0) (1) {\gn{C1}{1}};
  \node at (2.12,1.22) (2) {\gn{C2}{C_2}};
  \node[char] at (0.125,1.22) (3) {\gn{C3}{C_3}};
  \node[char] at (8.38,1.22) (4) {\gn{C3}{C_3}};
  \node[char] at (6.38,1.22) (5) {\gn{C3}{C_3}};
  \node[char] at (4.25,1.22) (6) {\gn{C3}{C_3}};
  \node at (0.125,2.87) (7) {\gn{S3}{S_3}};
  \node at (8.38,2.87) (8) {\gn{S3}{S_3}};
  \node at (6.38,2.87) (9) {\gn{S3}{S_3}};
  \node at (2.12,2.87) (10) {\gn{S3}{S_3}};
  \node[char] at (4.25,2.87) (11) {\gn{C3^2}{C_3^2}};
  \node[char] at (4.25,4.09) (12) {\gn{C3:S3}{C_3{\rtimes}S_3}};
  \draw[lin] (1)--(2) (1)--(3) (1)--(4) (1)--(5) (1)--(6) (2)--(7) (3)--(7)
     (2)--(8) (4)--(8) (2)--(9) (5)--(9) (2)--(10) (6)--(10) (3)--(11)
     (4)--(11) (5)--(11) (6)--(11) (7)--(12) (8)--(12) (9)--(12) (10)--(12)
     (11)--(12);
  \node[cnj=2] {9};
  \node[cnj=7] {3};
  \node[cnj=8] {3};
  \node[cnj=9] {3};
  \node[cnj=10] {3};
\end{tikzpicture}
\caption{The subgroup lattices of $\mathrm{SL}_2(\mathbb{F}_5)$ and $C_3 \ltimes S_3$. Normal subgroups are highlighted in red.}\label{fig:subgroups}
\end{figure}

\subsection{Endopurity for abelian groups}

We will now prove the aforementioned result regarding the lack of endopure subgroups among abelian groups.

\begin{thm}
    \label{prop:abendo}
    Let $G$ be a finite abelian group. Then $H \lneq G$ is endopure if and only if $G = C_{p^n}$ for some prime $p$, and $H = C_{p^{n-1}}$ 
\end{thm}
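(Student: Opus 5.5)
The "if" direction is \zcref{ex:cpn}, so the work is in the "only if" direction. Let $G$ be finite abelian and $H \lneq G$ endopure. I will show that $G$ is cyclic of prime power order; then \zcref{ex:cpn} forces $H = C_{p^{n-1}}$. The tool throughout is \zcref{rem:reform}: for any non-injective $\phi \in \mathrm{End}(G)$ with $\phi(H) \leq H$, we must have $\phi(G) \leq H$. Since $G$ is abelian, every multiplication-by-$m$ map $g \mapsto g^m$ is an endomorphism. These preserve every subgroup, so whenever one of them is non-injective its image must lie in $H$. More generally, so must the image of every non-injective endomorphism that commutes with, or stabilises, $H$.

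\textbf{Step 1: reduce to a $p$-group.} Write $G = \prod_p G_p$ as a product of its Sylow subgroups. Then $H = \prod_p H_p$ with $H_p \leq G_p$, because every subgroup of a finite abelian group splits along the primary decomposition. Suppose at least two primes divide $|G|$. For each prime $p$, let $\pi_p$ be the projection onto $G_p$ followed by inclusion. It is an endomorphism stabilising $H$, and it is non-injective, so $G_p = \pi_p(G) \leq H$. This holds for every $p$, giving $H = G$, a contradiction. Hence $G$ is a $p$-group.

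\textbf{Step 2: reduce to a cyclic group.} Now $G$ is a $p$-group and $H \lneq G$. The map $g \mapsto g^p$ is non-injective when $G \neq 1$, so $G^p \leq H$. Write $G = \la x_1 \ra \times \dots \times \la x_r \ra$ with $r \geq 2$, and aim for a contradiction. Here the projections onto factors need not preserve $H$, so I will instead pick a decomposition adapted to $H$. Since $G^p \leq H$, the quotient $G/H$ is elementary abelian and nontrivial, and $H$ is contained in a maximal subgroup $M \supseteq H$. The aim is a decomposition $G = A \times B$ with $A, B \neq 1$ in which $H = (H\cap A)\times(H \cap B)$. Then the projection onto $A$ stabilises $H$ and is non-injective, so $A \leq H$, and symmetrically $B \leq H$, giving $H = G$.

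Such a splitting may not exist, for instance when $H$ is "diagonal". For that case I will use a different endomorphism. Choose a basis with $x_1$ of maximal order $p^a$. Consider the endomorphisms $\psi$ that send $x_1 \mapsto x_1$, fix the other generators modulo $H$-compatible adjustments, and kill one generator $x_j$, $j\neq 1$. Alternatively, use $\phi = \mathrm{id} + \theta$ where $\theta$ has image in $G^p \leq H$ and satisfies $\theta(x_j) = x_j^{-1}$ up to a correction in $H$. The cleanest concrete choice I expect to work is the following. Since $G/H$ is nontrivial elementary abelian, choose the basis so that $x_1 \notin H$, which is possible by basis exchange, as some basis element lies outside the maximal subgroup. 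Define $\phi$ by $x_1 \mapsto x_1$ and $x_j \mapsto x_j\, x_1^{c_j}$, where $c_j$ is chosen so that $\phi$ preserves $H$ and kills something. Then $\phi(G)\ni x_1 \notin H$, contradicting \zcref{rem:reform}. The difficulty is that a map sending $x_1\mapsto x_1$ and $x_j\mapsto x_1^{c}$ must be a well-defined, non-injective endomorphism preserving $H$. Preserving $H$ is easiest when $\phi \equiv \mathrm{id}$ modulo a map into $G^p \leq H$, because then $\phi(h) \in hG^p \subseteq H$. So I will look for $\phi = \mathrm{id}\cdot\theta$ with $\theta(G) \leq G^p$ and $\phi$ non-injective.

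\textbf{Main obstacle.} The hard case is Step 2 when the exponents are distinct, say $x_2$ has order $p^b$ with $b < a$. One option is $\theta(x_1) = x_2^{-1}$-type maps, but those need image in $G^p$, which fails if $b$ is small. The other option is $\theta(x_2) = x_1^{-p^{a-b}}$; its image lies in $G^p$ when $a>b$, and $\phi$ then sends $x_2 \mapsto x_2 x_1^{-p^{a-b}}$. But this is injective, so a kernel has to be obtained differently. In that case I would instead take $\phi(x_1) = x_1 x_2$ type corrections, or simply handle the remaining case via the observation that $G/\la x_2\ra \cong$ a subgroup, giving a non-injective endomorphism $G \to G/\la x_2\ra \hookrightarrow G$ whose compatibility with $H$ is checked using $G^p \leq H$. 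Getting a uniform choice of $\phi$ that works across all configurations of $H$ is the step I expect to require the most care, possibly via a case split on whether $H$ contains a direct factor.
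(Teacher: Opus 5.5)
Your Step 1 and the observation $G^p\leq H$ are correct and agree with the paper's first two cases. The gap is the remaining case, where $G$ is a non-cyclic abelian $p$-group and $G^p\leq H\lneq G$. Here the proposal never exhibits a suitable endomorphism, and the candidates you favour provably fail.

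\textbf{Your candidates.} Any map $\phi(g)=g\,\theta(g)$ with $\theta(G)\leq G^p$ is injective: if $g\theta(g)=1$ then $g\in G^p$, so $\theta(g)\in G^{p^2}$, so $g\in G^{p^2}$, and inductively $g\in\bigcap_k G^{p^k}=1$. So the strategy ``preserve $H$ by being the identity modulo $G^p$'' can never produce a kernel. This is exactly why your attempt with $x_2\mapsto x_2x_1^{-p^{a-b}}$ came out injective. Similarly, the map $x_1\mapsto x_1$, $x_j\mapsto x_jx_1^{c_j}$ has inverse $x_j\mapsto x_jx_1^{-c_j}$, so it cannot kill anything. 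Your fallback $G\to G/\langle x_2\rangle\hookrightarrow G$ is a coordinate projection, and for a fixed basis it need not preserve $H$. For example, take $G=C_p\times C_p$ with $H=\langle x_1x_2\rangle$: then $G^p=1\leq H$, but $x_1x_2\mapsto x_1\notin H$. So $G^p\leq H$ alone does not justify it.

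\textbf{The missing idea.} The paper uses a rank-one endomorphism instead of a perturbation of the identity. Take a nonzero homomorphism $\theta\colon G/G^p\to C_p$ vanishing on $H/G^p$, and set $c_j=\theta(x_jG^p)$. Choose $i$ with $c_i\neq 0$, taking $x_i$ of minimal order among such indices so that the next map is well defined. Define $\phi(x_j)=x_i^{c_j}$. Then:
\begin{itemize}
\item $\phi(H)\leq\langle x_i^p\rangle\leq G^p\leq H$;
\item $\phi(x_i)=x_i^{c_i}\notin H$, since $\theta(x_i^{c_i}G^p)=c_i^2\neq 0$;
\item $\phi$ is non-injective, because its image is cyclic and $r\geq 2$.
\end{itemize}

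\textbf{Rescuing your first idea.} Contrary to your remark, the splitting does always exist once $G^p\leq H$. Write $G[p^k]$ for the $p^k$-torsion. There is a basis of $G/G^p$ adapted simultaneously to $H/G^p$ and to the flag $(G[p^k]G^p)/G^p$. This basis lifts to a basis $y_1,\dots,y_r$ of $G$. Then $H$ is the product of some of the $\langle y_j\rangle$ together with $\langle y_j^p\rangle$ for the remaining $j$. Projecting onto a factor $\langle y_j\rangle\not\leq H$ then gives the required endomorphism. However, that adapted-basis statement needs proof, which the proposal does not supply. Either way, this step must be filled in for the argument to be complete.
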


\begin{proof}
   From the discussion of \zcref{ex:cpn}, it suffices to show that if $G$ is not of this form then it has no endopure subgroups.

   We will begin by assuming that $G$ is not a $p$-group. Then $|G|$ has at least two prime factors, and as such it has at least two distinct Sylow subgroups and we write  $G = \prod_{p \mid |G|}G_p$ for the decomposition of $G$ into Sylow subgroups.

   Let $H \lneq G$. As $G$ is finite abelian we can equally split $H$ among the Sylow subgroups as $H = \prod_{p \mid |G|}H_p$ with each $H_p \leq G_p$. Since $H \lneq G$ we can find at least one $p$ such that $H_p \lneq G_p$. We then let $\phi \in \mathrm{End}(G)$ be the projection onto $G_p$ composed with the natural inclusion $G_p \hookrightarrow G$. As we have assumed that $G$ has at least one other non-trivial Sylow subgroup, $\phi$ is non-injective and clearly $H \leq \phi^{-1}(H) \lneq G$ as required.

   In light of the above, we now reduce to the case that $G$ is an abelian $p$-group. We consider two cases:

   \begin{enumerate}
       \item $pG \not\leq H$. Here we simply use the endomorphism $\phi(g) = pg$ which is non-injective as $G$ is a $p$-group. We have $\phi(H) = pH \leq H$, but $\phi(G)=pG \not\leq H$. Thus $H \leq \phi^{-1}(H) \lneq G$. That is, $H$ is not endopure.
       \item $pG \leq H$. This case is somewhat more involved, and we shall explicitly construct a non-injective endomorphism of $G$ using the generators $g_j$. The group $G$ necessarily has more than one cyclic factor, in particular we may write $G = \langle g_1 \rangle \times \cdots \times \langle g_r \rangle$ with $r \geq 2$ and $|g_i| = p^{n_i}$, and an arbitrary $g \in G$ can be written as $g = a_1g_1 + \cdots a_r g_r$. 
       
       Since we have assumed $pG \leq H$ and $H \lneq G$, we have $H/pG \lneq G/pG$, and there exists some non-zero homomorphism $\theta \colon G/pG \to C_p$ with $\theta(H/pH) = 0$. We want to upgrade this homomorphism to a homomorphism $\phi \colon G \to G$. For this, define $c_j$ to be $\theta(g_j+pG)$, choose $i$ such that $c_i \neq 0$ (which exists because $\theta$ is non-zero), and set $\phi(g_j) = c_j g_i$, extending linearly to $G$. 
       
       Since $\theta(H/pH) = 0$, for every element $h = a_1g_1 + \cdots a_r g_r \in H$ we have $c_1a_1+ \cdots + c_r a_r \equiv 0$ modulo $p$. Thus $\phi(h) = (\sum_j a_jc_j )g_i \in pG \leq H$. On the other hand $\phi(g_i) = c_ig_i \notin H$ as $g_i \notin H$, so $\phi(G) \not\leq H$. Lastly $\phi$ is non-injective as its image lies in $\langle g_i \rangle$, and thus $\phi$ shows that $H \leq G$ is not endopure.\qedhere
       
   \end{enumerate}
\end{proof}

While the abelian case can be completely resolved, the non-abelian case of direct products is more intricate. Naively, one may expect that any direct product group contains no endopure subgroups, but this is not the case. For $H_1 \leq G_1, H_2\leq G_2$ we always have that $H_1 \times H_2 \leq G_1 \times G_2$ is not endopure, which follows from the argument given at the start of the proof above. However, outside of this restrictive condition, we can have endopure subgroups of direct products. Below we provide the minimal (in terms of order of $G$) such example.

\begin{ex}\label{ex:c4xq8}
    Let $G = C_4 \times Q_8$. Then it is possible to check that $G$ contains two endopure subgroups (see the \hyperlink{appendix}{Appendix}) namely $H \cong C_4 \times C_2$ and $K \cong C_4 \rtimes C_4$. Writing $C_4 \cong \la a \ra$ and $Q_8 = \{1,i,j,k,-1,-i,-j,-k\}$ we can explicitly identify
    \[
    H =  \{(1,1),(1,-1),(a,i),(a,-i),(a^2,1),(a^2,-1),(a^3,i),(a^3,-i)\}
    \]
    We know that $H$ must be a diagonal-type subgroup, as otherwise it would not be endopure, and indeed its projections are both isomorphic to $C_4$, and as such if $H$ were of the form $H_1 \times H_2$ for $H_1 \leq C_4$ and $H_2 \leq Q_8$ it would be forced to have order 16.
\end{ex}

\subsection{Endopure transfers}

We finish this section by extending the notion of endopurity to families.

\begin{defi}
    Let $\mathcal{F}$ be a family. A transfer $(K \leq H) \in \mathbb{I}(\mathcal{F})$ is an \emph{endopure transfer} if $K \in \mathsf{EP}(H)$. We define
    \[\mathsf{EP}(\mathcal{F}) = \{ (K \leq H) \in \mathbb{I}(\mathcal{F}) \mid K \in \mathsf{EP}(H) \} / \cong\]
    to be the set of endopure transfers of $\mathcal{F}$, up to isomorphism of transfers.
\end{defi}

\begin{ex}\label{ex:epofsl2}
Let $G = \mathrm{SL}_2(\mathbb{F}_5)$, with its subgroup lattice shown in \zcref{fig:subgroups}. As a showcase we will  classify all endopure transfers of $\mathcal{F} = \Sub(G)$. It immediately follows from \zcref{ex:SL2F5} that $\Omega(\mathrm{SL}_2(\mathbb{F}_5)) = 1$, and hence every proper subgroup of $\mathrm{SL}_2(\mathbb{F}_5)$ is endopure, giving us endopure transfers:
\begin{multicols}{4}
    \begin{enumerate}[leftmargin=*,series=qa]
        \item $1 \leq \mathrm{SL}_2(\mathbb{F}_5)$,
        \item $C_2 \leq \mathrm{SL}_2(\mathbb{F}_5)$,
        \item $C_5 \leq \mathrm{SL}_2(\mathbb{F}_5)$,
        \item $C_4 \leq \mathrm{SL}_2(\mathbb{F}_5)$,
        \item $C_3 \leq \mathrm{SL}_2(\mathbb{F}_5)$,
        \item $C_{10} \leq \mathrm{SL}_2(\mathbb{F}_5)$,
        \item $Q_8 \leq \mathrm{SL}_2(\mathbb{F}_5)$,
        \item $C_6 \leq \mathrm{SL}_2(\mathbb{F}_5)$,
        \item $\mathrm{Dic}_5 \leq \mathrm{SL}_2(\mathbb{F}_5)$,
        \item $\mathrm{SL}_2(\mathbb{F}_3) \leq \mathrm{SL}_2(\mathbb{F}_5)$,
        \item $\mathrm{Dic}_3 \leq \mathrm{SL}_2(\mathbb{F}_5)$.
    \end{enumerate}
\end{multicols}
The following are all the endopure transfers where the larger subgroup is not $G$:
\begin{multicols}{4}
    \begin{enumerate}[resume=qa,leftmargin=*,align=left]
        \item $1 \leq C_2$,
        \item $1 \leq C_3$,
        \item $C_2 \leq C_4$,
        \item $1 \leq C_5$,
        \item $C_2 \leq Q_8$,
        \item $C_4 \leq Q_8$,
        \item $C_4 \leq \mathrm{Dic}_3$,
        \item $C_4 \leq \mathrm{Dic}_5$,
        \item $C_3 \leq \mathrm{SL}_2(\mathbb{F}_3)$,
        \item $C_6 \leq \mathrm{SL}_2(\mathbb{F}_3)$.
    \end{enumerate}
\end{multicols}

In particular we conclude that $|\mathsf{EP}(\Sub(\mathrm{SL}_2(\mathbb{F}_5)))| = 21$.
\end{ex}

\section{Irreducible global transfer systems correspond to endopure transfers}
\label{sec:irreducibles}

The goal of this section is to study the join- and meet-irreducible elements of the lattice of global transfer systems for a family $\mathcal{F}$. We will show that each of these is determined by a single transfer, and that in fact these irreducible elements correspond to endopure transfers.

After introducing two constructions for global transfer systems determined from a single transfer, we prove a series of auxiliary technical results, which culminate in the following five facts about the lattice of global $\mathcal{F}$-transfer systems:

\begin{enumerate}
    \item Any global transfer system can be written as a join of join-irreducible global transfer systems (\zcref{prop:join-condition}).
    \item Any global transfer system can be written as a meet of meet-irreducible global transfer systems (\zcref{prop:meet-condition}).
    \item Join-irreducible global transfer systems correspond to endopure transfers (\zcref{prop:join-bijection}).
    \item Meet-irreducible global transfer systems correspond to endopure transfers (\zcref{prop:meet-bijection}).
    \item Whether a join-irreducible is contained in a meet-irreducible depends on an explicit condition on the corresponding endopure transfers (\zcref{prop:containment-condition}).
\end{enumerate}

Throughout this section we will fix an implicit family $\mathcal{F}$, and generally omit it from the notation.

\subsection{Generation and anti-generation of global transfer systems}

Just as in the equivariant case, it is possible to form the smallest global transfer system generated by a set of transfers $S \subset \mathbb{I}(\mathcal{F})$ using a variant of Rubin's algorithm, introduced in \cite[Theorem A.2]{rubin}. In the global case however conjugation is taken care of by restriction along inner automorphisms, so there are only two steps to this algorithm, instead of the three steps for equivariant transfer systems.

\begin{constr}
    We will denote by $\overline{\la S \ra}$ the global transfer system generated by $S$, the smallest global transfer system that contains $S$. It is the transitive closure of the set of restrictions of transfers in $S$. Isomorphic single transfers generate the same global transfer system, as they are mutual restrictions.
\end{constr}

In \cite{balchinspitz}, \textit{co}transfer systems were considered when discussing the meet irreducible elements of the lattice. However, the notion of a cotransfer system cannot be straightforwardly generalized to the global case. Here we will instead describe a construction of the transfer system \textit{antigenerated} by a set of transfers $S$ (although this is not the most appropriate name, see \zcref{rem:antigeneration-example}). As motivation for said construction, consider the following lemma pertaining to global transfer systems generated by a single transfer.

\begin{lemm}
\label{lemm:main-property-generate}
    If $(H \leq G) \in \overline{\la H' \leq G' \ra}$ then there exists a homomorphism $\phi \colon G \to G'$ with ${H \leq \phi^{-1}(H') \lneq G}$.
\end{lemm}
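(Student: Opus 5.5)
The plan is to use the explicit description of $\overline{\la H' \leq G' \ra}$ from the construction above: it is the transitive closure of the set of restrictions of $(H' \leq G')$. Note first that the statement is only meaningful for $H \lneq G$, since every global transfer system contains the reflexive pairs $(G \leq G)$, and for these no $\phi$ with $\phi^{-1}(H') \lneq G$ need exist. So I will assume $H \lneq G$. I will also use that $H' \lneq G'$: if $H' = G'$, every restriction is of the form $(K \leq K)$, the generated system contains only reflexive pairs, and the hypothesis $H \lneq G$ cannot hold.

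\emph{Step 1: a chain of restrictions.} Since $(H \leq G)$ lies in the transitive closure of the restrictions of $(H' \leq G')$, there is a chain
\[H = K_0 \leq K_1 \leq \cdots \leq K_m = G\]
in which each $(K_{i-1} \leq K_i)$ is a restriction of $(H' \leq G')$. Deleting equalities $K_{i-1} = K_i$ keeps a valid chain, so I may assume all inclusions are strict. Because $H \neq G$, this gives $m \geq 1$.

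\emph{Step 2: take the top step.} The last link $(K_{m-1} \leq G)$ is a restriction of $(H' \leq G')$. By definition there is a homomorphism $\phi \colon G \to G'$ with $\phi^{-1}(H') = K_{m-1}$. Then $H = K_0 \leq K_{m-1} = \phi^{-1}(H')$, and $K_{m-1} \lneq G$ by strictness of the chain. This is exactly the required $\phi$.

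\emph{Main obstacle.} The only point needing care is that the generated system really is the transitive closure of restrictions. This requires that composites of restrictions are again restrictions, and that restricting a chain along any $\psi \colon L \to G$ gives the chain of preimages $\psi^{-1}(K_i)$, each of whose links is a restriction via $\phi_i \circ \psi$. This is asserted in the construction above, so the argument reduces to the two steps given.
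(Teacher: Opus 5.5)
Your proof is correct and is essentially the paper's argument: you write $(H \leq G)$ as a composition of restrictions of $(H' \leq G')$ and take $\phi$ from the top non-trivial link, just as the paper does. Your remark that the statement needs $H \lneq G$ is a correct clarification the paper leaves implicit, since $(G \leq G)$ always lies in the generated system and admits no such $\phi$.
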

\begin{proof}
    If the transfer $(H \leq G)$ is in $\overline{\la H' \leq G' \ra}$, then it is a composition of restrictions of $(H' \leq G')$. Let $(H_0 \lneq G)$ be the top non-trivial transfer of this composition, so that $H \leq H_0 \lneq G$. Then as $(H_0 \lneq G)$ is a restriction of $(H' \leq G')$, there exists a homomorphism $\phi \colon G \to G'$ with $\phi^{-1}(H')=H_0$.
\end{proof}

In the lemma above, if $(H \leq G) = (H' \leq G')$ then the resulting $\phi$ can simply be the identity in $G$. If $(H \leq G)$ is endopure, that means that \emph{only} the identity in $G$ works. So in a way, the endopure transfers are those for which $(H \leq G)$ lives in $\overline{\la H \leq G \ra}$ only for trivial reasons, meaning that $(H \leq G)$ cannot be written as a non-trivial composition of restrictions of itself. We will invert the condition of \zcref{lemm:main-property-generate} to construct the global transfer system $T$ antigenerated by $(H \leq G)$, which will also mean that if $(H \leq G)$ is endopure, then it will \emph{only} be missing from that $T$ for trivial reasons.

\begin{constr}
\label{constr-antigen}
For a transfer $(H \lneq G)$, let $U_{H \leq G}$ denote the set of all the transfers $(H' \leq G)$ for $H \leq H' \lneq G$.

Let $R_{H \leq G}$ denote the set of transfers satisfying that $(L \leq K) \in R_{H \leq G}$ if and only if there exists a homomorphism $\phi \colon G \to K$ and a transfer $(H' \leq G) \in U_{H \leq G}$ such that $\phi^{-1}(L) = H'$. Note that $U_{H \leq G} \subset R_{H \leq G}$. Let $\underline{\la H \leq G \ra} \coloneq \mathbb{I}(\fami) \setminus R_{H \leq G}$. We refer to $\underline{\la H \leq G \ra}$ as the (global) transfer system \textit{antigenerated} by $(H \leq G)$.

Equivalently, given a transfer $(H \leq G)$, another transfer $(L \leq K)$ belongs to $\underline{\la H \leq G \ra}$ if and only if for any $\phi \colon G \to K$, the subgroup $\phi^{-1}(L) \leq G$ equals all of $G$ or does not contain $H$. That is, $(L \leq K) \notin \underline{\la H \leq G \ra}$ if and only if there exists a homomorphism $\phi \colon G \to K$ with $H \leq \phi^{-1}(L) \lneq G$. Note that this last equivalent condition is the opposite of the one in \zcref{lemm:main-property-generate}.
\end{constr}

\begin{prop}
\label{prop:anti-is-gts}
    For any transfer $(H \lneq G)$ the set $\underline{\la H \leq G \ra}$ is a global transfer system.
\end{prop}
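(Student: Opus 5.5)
We have to check the two conditions of \zcref{defi:gts} for $T \coloneq \underline{\la H \leq G \ra}$. Throughout we use the characterization from \zcref{constr-antigen}: a transfer $(L \leq K)$ fails to lie in $T$ exactly when there is a homomorphism $\phi \colon G \to K$ with $H \leq \phi^{-1}(L) \lneq G$.

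\emph{Reflexivity.} For any $K$ and any $\phi \colon G \to K$ we have $\phi^{-1}(K) = G$, which is never a proper subgroup of $G$. Hence $(K \leq K) \in T$.

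\emph{Antisymmetry and refinement.} Every element of $T$ is a subgroup pair, so $T$ refines the subgroup order. Antisymmetry is then inherited from the subgroup relation.

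\emph{Restriction.} Suppose $(L \leq K) \in T$ and $\psi \colon K' \to K$ is a homomorphism. We must show $(\psi^{-1}(L) \leq K') \in T$, and we argue by contrapositive.
\begin{itemize}
    \item Assume there is $\phi \colon G \to K'$ with $H \leq \phi^{-1}(\psi^{-1}(L)) \lneq G$.
    \item Then $\psi\circ\phi \colon G \to K$ satisfies $(\psi\phi)^{-1}(L) = \phi^{-1}\psi^{-1}(L)$.
    \item This subgroup contains $H$ and is proper in $G$, so $(L \leq K) \notin T$, a contradiction.
\end{itemize}
So $R_{H\leq G}$ is closed under "co-restriction" and $T$ is closed under restriction.

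\emph{Transitivity.} This is the step requiring a little care. Suppose $(M \leq L) \in T$ and $(L \leq K) \in T$, and suppose for contradiction that $(M \leq K) \notin T$. Then there is $\phi \colon G \to K$ with $H \leq \phi^{-1}(M) \lneq G$. Since $M \leq L$, we get $\phi^{-1}(M) \leq \phi^{-1}(L)$, so $H \leq \phi^{-1}(L)$. Because $(L \leq K) \in T$, the subgroup $\phi^{-1}(L)$ cannot be proper in $G$, so $\phi^{-1}(L) = G$. This means $\phi$ factors through $L$: write $\phi = \iota \circ \phi'$ with $\phi' \colon G \to L$ and $\iota$ the inclusion $L \hookrightarrow K$. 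Then $\phi'^{-1}(M) = \phi^{-1}(M)$, since $M \leq L$, and this subgroup contains $H$ and is proper in $G$. Hence $(M \leq L) \notin T$, which is the desired contradiction.

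The only subtle point is this factorization through $L$ in the transitivity step. Everything else is formal.
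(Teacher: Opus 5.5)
Your proof is correct and takes essentially the same route as the paper. Restriction is handled by composing $\phi$ with the given homomorphism, and composition by noting that $\phi^{-1}(L)$ must be all of $G$, so $\phi$ lands in $L$ and the hypothesis $(M \leq L) \in \underline{\la H \leq G \ra}$ applies; the only difference is that you also check reflexivity and antisymmetry explicitly, which the paper leaves implicit.
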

\begin{proof}
    First we check restriction. Assume that $(L \leq K)$ belongs to $\underline{\la H \leq G \ra}$, and that we have a homomorphism $\phi \colon A \to K$. Assume for a contradiction that $(\phi^{-1}(L)\leq A) \notin \underline{\la H \leq G \ra}$. Then there exists a $\psi \colon G \to A$ such that $H \leq \psi^{-1}(\phi^{-1}(L)) \lneq G$. But $\psi^{-1}(\phi^{-1}(L))= (\phi \circ \psi)^{-1}(L)$, so this contradicts $(L \leq K) \in \underline{\la H \leq G \ra}$.

    To check that $\underline{\la H \leq G \ra}$ is closed under composition, Assume that we have $M \leq L \leq K$, with $(M \leq L), (L \leq K) \in \underline{\la H \leq G \ra}$, and consider a homomorphism $\phi\colon G \to K$. Then since $(L \leq K) \in \underline{\la H \leq G \ra}$, we know that the subgroup $\phi^{-1}(L)$ equals $G$ or does not contain $H$. If $\phi^{-1}(L)$ does not contain $H$ then neither does $\phi^{-1}(M) \leq \phi^{-1}(L)$. If instead $\phi^{-1}(L)=G$ then $\phi$ sends $G$ to $L$, and since $(M \leq L) \in \underline{\la H \leq G \ra}$, we see that $\phi^{-1}(M)$ equals $G$ or does not contain $H$. As this holds for any $\phi$, by construction $(M \leq K) \in \underline{\la H \leq G \ra}$.
\end{proof}

\begin{rem}
\label{rem:antigeneration-example}
    The global transfer system constructed above gives us a preferred choice of a global transfer system that is missing a specific transfer. We warn the reader however that this is not generally maximal among such objects, and thus calling these \emph{antigenerated} by $S$ is not entirely appropriate. An example can be seen by looking at $(1 \leq C_6)$, where $\underline{\la 1 \leq C_6 \ra}$ is trivial, so it does not contain $(C_2 \leq C_6)$ or $(C_3 \leq C_6)$, but there are global transfer systems that contain some of these but not $(1 \leq C_6)$ (the global transfer system $\overline{\la C_3 \leq C_6 \ra}$ generated by $(C_3 \leq C_6)$ would be an example). However, as we will see, all meet-irreducible global transfer systems are of this form constructed above, and in the next proposition we will show that any global transfer system $T$ that does not contain a given transfer $(H \leq G)$ must be contained in some global transfer system of this form, whose choice depends on $T$, and not just on $(H \leq G)$. This partially justifies our naming decision, as the construction gives us a very useful choice of a global transfer system that is missing a specific transfer.
\end{rem}

\begin{rem}
    In \cite[Proposition~2.12]{balchinspitz} it is shown that in the case of equivariant transfer systems the meet-irreducibles correspond to right lifts of cotransfer systems generated by a single transfer. As mentioned above, there is no analogue to the notion of cotransfer system in the global setting, but the final result of their construction coincides with what would be the analogue of our \zcref{constr-antigen}. 
\end{rem}

\begin{prop}
\label{prop-smallerumbrella}
    Let $T$ be a global transfer system, and assume that some transfer $(H \leq G)$ does not belong to $T$. Then there exists some group $G_0 \in \fami$ with $H \leq G_0 \leq G$, $H \neq G_0$, and such that $T \subset \underline{\la H \leq G_0 \ra}$.
\end{prop}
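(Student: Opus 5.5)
Choose $G_0$ minimal: among all subgroups $G_0$ with $H \lneq G_0 \leq G$ and $(H \leq G_0) \notin T$, pick one of minimal order. The set is non-empty, since $G$ itself qualifies: $(H \leq G) \notin T$, and $H \neq G$ because $T$ contains the identity transfers $(G \leq G)$ by reflexivity. Minimality gives the key property: for every $H' \lneq G_0$ with $H \lneq H'$, we have $(H \leq H') \in T$.

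Now show $T \subset \underline{\la H \leq G_0 \ra}$, by contradiction. Take $(L \leq K) \in T$ and suppose it is not in $\underline{\la H \leq G_0\ra}$. By the equivalent description in \zcref{constr-antigen}, there is a homomorphism $\phi \colon G_0 \to K$ with $H \leq \phi^{-1}(L) \lneq G_0$. Put $H' = \phi^{-1}(L)$. Since $T$ is closed under restriction along $\phi$, we get $(H' \leq G_0) \in T$. There are two cases.
\begin{enumerate}
\item If $H' = H$, then $(H \leq G_0) \in T$ directly, contradicting the choice of $G_0$.
\item If $H \lneq H' \lneq G_0$, then minimality gives $(H \leq H') \in T$. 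Closure under composition then gives $(H \leq G_0) \in T$, again a contradiction.
\end{enumerate}
Hence every transfer of $T$ lies in $\underline{\la H \leq G_0 \ra}$. Finally, $G_0 \in \fami$ because families are closed under subgroups.

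The only genuinely non-routine step is choosing $G_0$ minimal rather than taking $G_0 = G$. The example in \zcref{rem:antigeneration-example} shows that $G_0 = G$ can fail: there $T = \overline{\la C_3 \leq C_6\ra}$ is not contained in the trivial system $\underline{\la 1 \leq C_6 \ra}$. Minimality is exactly what makes the composition step in the second case go through. Everything else is direct unwinding of the definitions.
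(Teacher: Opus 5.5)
Your proof is correct and is essentially the paper's argument. The paper inducts on the index, passing from $G$ to some $G'$ with $(G' \leq G) \in T$ until it reaches a $G_0$ with $U_{H \leq G_0} \cap T = \varnothing$. You reach such a $G_0$ in one step by taking one of minimal order, and then rule out any $(L \leq K) \in T \cap R_{H \leq G_0}$ with the same restriction-plus-composition step.
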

\begin{proof}
    We prove this by induction on the index of the transfer $(H \leq G)$. If the index of $(H \leq G)$ is prime, then there are no intermediate subgroups between $H$ and $G$. This means that in the notation of \zcref{constr-antigen}, $U_{H \leq G}$ contains just the transfer $(H \leq G)$. Then restriction shows that since $(H \leq G) \notin T$, we also have that any transfer in $R_{H \leq G}$ is not in $T$. Thus $T \subset \underline{\la H \leq G \ra}$.

    Assume that the statement of the proposition holds whenever the index of $(H \leq G)$ is $<N$. For a transfer $(H \leq G)$ with index $N$ and a global transfer system $T$ with $(H \leq G) \notin T$, if $U_{H \leq G}$ is contained in the complement of $T$ then we are done. Otherwise, this means that there exists some transfer $(G' \leq G)$ in $T$ with $H \leq G' \leq G$ and $G' \neq G$. This shows that $(H \leq G') \notin T$, otherwise by composition we would have $(H \leq G) \in T$. The index of $H$ in $G'$ must now be $<N$, and the induction hypothesis shows that there exists $G_0$ with $H \leq G_0 \leq G'$, $H \neq G_0$, and such that $T \subset \underline{\la H \leq G_0 \ra}$.
\end{proof}

\subsection{Joins of join-irreducibles and meets of meet-irreducibles}\label{subsec:condition}

In this section we will first show that the join-irreducible global transfer systems satisfy the condition required for \zcref{thm:FTFCA2}. Then we will do the same for the meet-irreducibles.

Recall that the join in $\mathsf{Tr}^\mathsf{Gl}(\mathcal{F})$ is given by the closure under composition of the union. Each global transfer system can be written as a join of the transfer systems generated by its individual transfers:

\begin{prop}
\label{prop-cupgens}
    Let $T$ be a global transfer system. Then
    \[T= \bigvee_{(H \leq G) \in T} \overline{\la H \leq G \ra}.\]
\end{prop}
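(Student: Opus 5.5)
The plan is to prove the equality by showing that each side is contained in the other, using only that $\overline{\la S \ra}$ is the smallest global transfer system containing $S$ and that the join in $\mathsf{Tr}^\mathsf{Gl}(\fami)$ is the least upper bound.

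First, the inclusion $T \subset \bigvee_{(H \leq G) \in T} \overline{\la H \leq G \ra}$. Take any $(H \leq G) \in T$. By construction, $(H \leq G)$ lies in $\overline{\la H \leq G \ra}$, and this generated system is one of the joinands. The join contains every joinand, so it contains $(H \leq G)$.

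Next, the reverse inclusion. For each $(H \leq G) \in T$, the system $T$ is a global transfer system containing $(H \leq G)$. Minimality of the generated system then gives $\overline{\la H \leq G \ra} \subset T$. So $T$ is an upper bound for all the joinands. Since the join is the least upper bound in the complete lattice $\mathsf{Tr}^\mathsf{Gl}(\fami)$, it is contained in $T$. Concretely, the join is the closure of the union under composition, and $T$ contains the union and is closed under composition.

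One edge case needs checking. If $T$ contains only trivial transfers $(G \leq G)$, each $\overline{\la G \leq G \ra}$ is the trivial global transfer system. An empty join would also give the bottom element, so the formula holds in this case as well.

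I do not expect a real obstacle here; the statement is formal once both inclusions are written down.
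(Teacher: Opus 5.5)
Your proof is correct and is exactly the unpacking of the paper's one-line proof (``by construction''): each $\overline{\la H \leq G \ra}$ is contained in $T$ by minimality, and $T$ is contained in the join because each of its transfers lies in its own joinand. Your edge-case remark is harmless but unnecessary: reflexivity forces every $T$ to contain all trivial transfers $(G \leq G)$ with $G \in \fami$, so the join is never empty.
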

\begin{proof}
    By construction. \qedhere
\end{proof}

From the above we can deduce that a join-irreducible must be generated by a single transfer.

\begin{prop}
\label{prop:join-irre-form}
    Any join-irreducible global transfer system $T$ is of the form $\overline{\la H \leq G \ra}$ for some transfer $(H \leq G)$.
\end{prop}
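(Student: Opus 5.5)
The plan is to apply \zcref{prop-cupgens} directly to $T$ and then use the definition of join-irreducibility recalled in \zcref{subsec:fca}. That definition is phrased for arbitrary subsets $X$ of the lattice, so it covers infinite joins. This matters because $\fami$ may be infinite, and then $T$ may contain infinitely many transfers.

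Let $T$ be join-irreducible. By \zcref{prop-cupgens}, we have $T = \bigvee X$, where
\[X = \{\overline{\la H \leq G \ra} \mid (H \leq G) \in T\} \subset \mathsf{Tr}^\mathsf{Gl}(\fami).\]
Join-irreducibility then gives $T \in X$, so $T = \overline{\la H \leq G \ra}$ for some transfer $(H \leq G) \in T$. This is exactly the claim.

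The only point that needs a remark is the degenerate case where $X$ is empty. That cannot happen, because $T$ always contains the trivial transfers $(G \leq G)$ by reflexivity of $\leq_T$. Under the definition as stated, the bottom element (the trivial global transfer system) would in any case be excluded from being join-irreducible via $X = \varnothing$. Either way, whenever $T$ is join-irreducible the set $X$ is non-empty and the argument above goes through. If $T$ were generated only by trivial transfers, we could take $(H \leq G) = (G \leq G)$, but by the preceding remark this case does not arise.

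I do not expect any real obstacle here. The substantive work, which consists of showing that $(H \leq G)$ can be chosen endopure and that this choice is unique, belongs to the later propositions and not to this one.
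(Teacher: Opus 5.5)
Your proposal is correct and is exactly the paper's argument: write $T$ as the join of the $\overline{\la H \leq G \ra}$ over its transfers using \zcref{prop-cupgens}, then apply the definition of join-irreducibility. Your remark that $X$ is non-empty, because $T$ is reflexive and $\fami$ is non-empty, is a harmless extra check that the paper leaves implicit.
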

\begin{proof}
    Apply \zcref{prop-cupgens} and the definition of join-irreducible. \qedhere
\end{proof}

We just saw that each join-irreducible is generated by a single transfer. However not all transfers will generate a join-irreducible global transfer system, and most join-irreducibles can be generated by multiple different non-isomorphic transfers. Studying which transfers generate irreducibles is the goal of \zcref{subsec:irrs}, where we also deal with this multiplicity. For now we will check the condition on join-irreducibles required for \zcref{thm:FTFCA2}.

\begin{prop}
\label{prop:join-condition}
    Any global transfer system of the form $\overline{\la H \leq G \ra}$ can be written as a join of join-irreducible global transfer systems. In light of \zcref{prop-cupgens}, any global transfer system can also be written as a join of join-irreducible global transfer systems.
\end{prop}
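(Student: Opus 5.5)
We argue by strong induction on the index $[G \colon H]$; \zcref{lemm:res-index} is what makes this induction available. If $H = G$ then $\overline{\la H \leq G \ra}$ is the trivial (bottom) global transfer system, which is the empty join. So assume $H \lneq G$. If $\overline{\la H \leq G \ra}$ is join-irreducible we are done. Otherwise, write it as $T=\bigvee X$ with $T \notin X$. Since $(H \leq G) \in T$ and the join is the transitive closure of the union, $(H \leq G)$ factors as a chain
\[ H = H_0 \lneq H_1 \lneq \cdots \lneq H_m = G, \]
where each $(H_{i-1} \leq H_i)$ lies in some member $T_i \in X$.

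If $m = 1$, then $(H \leq G) \in T_1 \leq T$, so $T_1 = T$, contradicting $T \notin X$. Hence $m \geq 2$, and each step $(H_{i-1} \leq H_i)$ has index strictly smaller than $[G \colon H]$. By the inductive hypothesis each $\overline{\la H_{i-1} \leq H_i \ra}$ is a join of join-irreducibles.

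We then check that
\[ T = \bigvee_i \overline{\la H_{i-1} \leq H_i \ra}. \]
The inclusion $\supseteq$ holds because each $H_{i-1} \leq H_i$ lies in $T_i \leq T$. The inclusion $\subseteq$ holds because the right-hand side is a global transfer system containing the composite $(H \leq G)$, and $T$ is the smallest such. Therefore $T$ is a join of joins of join-irreducibles, hence itself a join of join-irreducibles.

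The second sentence of the statement then follows from \zcref{prop-cupgens}, since a join of joins of join-irreducibles is again a join of join-irreducibles.

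The step needing most care is the first one: showing that a non-trivial join decomposition forces a genuine chain with $m \geq 2$ whose steps have strictly smaller index. This is exactly where the concrete description of joins as transitive closures of unions is used. No other difficulty is expected.
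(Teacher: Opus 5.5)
Your proof is correct and follows the paper's argument essentially verbatim: induction on $[G \colon H]$, with a non-trivial join forcing a chain $H = H_0 \lneq \cdots \lneq H_m = G$ with $m \geq 2$, and then the identification $\overline{\la H \leq G \ra} = \bigvee_i \overline{\la H_{i-1} \leq H_i \ra}$, whose steps have strictly smaller index. The differences are cosmetic: the paper takes the prime-index case as its base case (showing directly that it is join-irreducible) rather than your empty-join case $H = G$, and your appeal to the restriction-index lemma is unnecessary, since the index drops simply because indices multiply along the chain.
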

\begin{proof}
    We prove the statement by induction on the index of the transfer $(H \leq G)$. First assume that the index of $(H \leq G)$ is prime, we will show that $\overline{\la H \leq G \ra}$ is join irreducible. If $\overline{\la H \leq G \ra} = \bigvee_{i \in I} T_i$ for some (possibly infinite) $I$ then $(H \leq G)$ is a composition of transfers in $\bigcup_{i \in I} T_i$, but since there are no intermediate subgroups between $H$ and $G$, this is a composition of length one, and $(H \leq G) \in \bigcup_{i \in I} T_i$. This means that $(H \leq G) \in T_i$ for some $i$, so $\overline{\la H \leq G \ra} =T_i$.

    Assume that the statement of the proposition holds whenever the index of $(H \leq G)$ is $<N$, and consider a transfer $(H \leq G)$ with index $N$. If $\overline{\la H \leq G \ra}$ is join-irreducible then we are done. If $\overline{\la H \leq G \ra}$ is not join-irreducible then $\overline{\la H \leq G \ra} = \bigvee_{i \in I} T_i$ for some (possibly infinite) non-trivial join of global transfer systems. This means that $H \leq G$ can be written as a (finite) composition of transfers in $\bigcup_{i \in I} T_i$. Thus we have $H= H_0 \leq H_1, \leq \dots, \leq H_M =G$, and for each $1 \leq j \leq M$ there is an $i_j \in I$ with $(H_{j-1} \leq H_j) \in T_{i_j}$. Here $M \geq 2$, otherwise $\overline{\la H \leq G \ra} = T_{i_1}$ and the join above would be trivial. We have
    \[\overline{\la H \leq G \ra} \subset \bigvee_{j=1}^M \overline{\la H_{j-1} \leq H_j \ra} \subset \bigvee_{j=1}^M T_{i_j} \subset \bigvee_{i \in I} T_i = \overline{\la H \leq G \ra},\]
    and hence $\overline{\la H \leq G \ra} = \bigvee_{j=1}^M \overline{\la H_{j-1} \leq H_j \ra}$. Since each transfer $(H_{j-1} \leq H_j)$ has index $<N$, each $\overline{\la H_{j-1} \leq H_j \ra}$ can be written as a join of join-irreducibles by the induction hypothesis, and thus so can $\overline{\la H \leq G \ra}$.
\end{proof}

We now prove analogues of the above for meet-irreducible global transfer systems, that meet-irreducibles are antigenerated by a single transfer, and that each global transfer system is a meet of meet-irreducibles.

\begin{prop}
\label{prop-capantis}
    Let $T$ be a global transfer system and let $C_T$ denote the set of all transfers $(H \leq G)$ that satisfy $U_{H \leq G} \cap T = \varnothing$. Then
    \[T= \bigcap_{(H \leq G) \in C_T} \underline{\la H \leq G \ra}.\]
\end{prop}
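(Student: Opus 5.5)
We prove the two inclusions separately.

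For $T \subset \bigcap_{(H \leq G) \in C_T} \underline{\la H \leq G \ra}$, we fix $(H \leq G) \in C_T$ and show $T \subset \underline{\la H \leq G \ra}$. Equivalently, no transfer of $R_{H \leq G}$ lies in $T$. Let $(L \leq K) \in R_{H \leq G}$. By \zcref{constr-antigen} there is a homomorphism $\phi \colon G \to K$ with $\phi^{-1}(L) = H'$ for some $(H' \leq G) \in U_{H \leq G}$. If $(L \leq K)$ were in $T$, then closure of $T$ under restriction along $\phi$ would give $(H' \leq G) \in T$, contradicting $U_{H \leq G} \cap T = \varnothing$. So $R_{H \leq G} \cap T = \varnothing$, i.e.\ $T \subset \underline{\la H \leq G \ra}$. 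This direction is just the restriction axiom.

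For the reverse inclusion, we argue contrapositively. Take a transfer $(H \leq G) \notin T$ (necessarily $H \neq G$, since $T$ is reflexive); we must find some element of $C_T$ whose antigenerated system omits $(H \leq G)$. By \zcref{prop-smallerumbrella} there is $G_0$ with $H \leq G_0 \leq G$, $H \neq G_0$ and $T \subset \underline{\la H \leq G_0 \ra}$. Note that the proof of \zcref{prop-smallerumbrella} in fact produces $G_0$ with $U_{H \leq G_0} \cap T = \varnothing$. Alternatively, this follows from the inclusion $T \subset \underline{\la H \leq G_0 \ra}$ together with $U_{H \leq G_0} \subset R_{H \leq G_0}$, which is disjoint from $\underline{\la H \leq G_0 \ra}$. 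Hence $(H \leq G_0) \in C_T$.

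It remains to check that $(H \leq G) \notin \underline{\la H \leq G_0 \ra}$. By the equivalent description in \zcref{constr-antigen}, it suffices to exhibit a homomorphism $\psi \colon G_0 \to G$ with $H \leq \psi^{-1}(H) \lneq G_0$. The inclusion $\psi \colon G_0 \hookrightarrow G$ works, since $\psi^{-1}(H) = H \cap G_0 = H \lneq G_0$. Therefore $(H \leq G)$ is absent from the intersection, which gives $\bigcap_{C_T} \underline{\la \cdot \ra} \subset T$.

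The only subtle point is ensuring the $G_0$ supplied by \zcref{prop-smallerumbrella} actually lies in $C_T$. As argued, this follows either by inspecting that proof or directly from $U \subset R$. We also note that if $T = \mathbb{I}(\fami)$ then $C_T = \varnothing$, and the empty intersection is interpreted as $\mathbb{I}(\fami)$, so the statement still holds.
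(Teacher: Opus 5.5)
Your proof is correct and follows essentially the same route as the paper. The first inclusion is the restriction axiom, which you spell out where the paper simply asserts that $U_{H \leq G} \cap T = \varnothing$ forces $R_{H \leq G} \cap T = \varnothing$; the reverse inclusion uses \zcref{prop-smallerumbrella} together with restriction along the inclusion $G_0 \hookrightarrow G$, exactly as in the paper.
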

\begin{proof}
    If $U_{H \leq G} \cap T =\varnothing$ then also $R_{H \leq G} \cap T =\varnothing$, and so $T \subset \underline{\la H \leq G \ra}$. This shows that $T \subset \bigcap_{(H \leq G) \in C_T} \underline{\la H \leq G \ra}$.

    To show the other inclusion, assume that some transfer $(H \leq G)$ is not in $T$. Then by \zcref{prop-smallerumbrella} we have some $G_0$ with $H \leq G_0 \leq G$, $H \neq G_0$, and $T \subset \underline{\la H \leq G_0 \ra}$, so in particular $U_{H \leq G_0} \cap T =\varnothing$ and $(H \leq G_0) \in C_T$. If we let $\phi$ be the inclusion of $G_0$ into $G$, we see that $\phi^{-1}(H) = H$, and thus $(H \leq G) \in R_{H \leq G_0}$, so $(H \leq G) \notin \underline{\la H \leq G_0 \ra}$, and $(H \leq G) \notin \bigcap_{(H' \leq G') \in C_T} \underline{\la H' \leq G' \ra}$.
\end{proof}

\begin{prop}
\label{prop:meet-irre-form}
    Any meet-irreducible global transfer system $T$ is of the form $\underline{\la H \leq G \ra}$ for some transfer $(H \leq G)$.
\end{prop}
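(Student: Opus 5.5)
This follows the pattern of \zcref{prop:join-irre-form}, with \zcref{prop-capantis} in place of \zcref{prop-cupgens}. Let $T$ be meet-irreducible. By \zcref{prop-capantis} we can write
\[T = \bigcap_{(H \leq G) \in C_T} \underline{\la H \leq G \ra}.\]
Meet in $\mathsf{Tr}^\mathsf{Gl}(\fami)$ is intersection, so this is a meet of global transfer systems, each of which is of the required form by \zcref{prop:anti-is-gts}. By the definition of meet-irreducibility (for arbitrary subsets $X$), $T$ must equal one of the terms, i.e.\ $T = \underline{\la H \leq G \ra}$ for some $(H \leq G) \in C_T$.

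The only case needing care is $C_T = \varnothing$, where the intersection is empty. Here I would use the convention that the empty meet is the top element $\mathbb{I}(\fami)$. Under that convention \zcref{prop-capantis} gives $T = \mathbb{I}(\fami)$, which is the empty meet itself. Meet-irreducibility with $X = \varnothing$ would then require $T \in \varnothing$, a contradiction. So the top element is not meet-irreducible, and we may assume $C_T \neq \varnothing$.

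I should also check that the indexing transfers $(H \leq G) \in C_T$ are proper, so that \zcref{constr-antigen} applies. If $H = G$, then $U_{H \leq G}$ is empty and the condition $U_{H\leq G} \cap T = \varnothing$ holds vacuously. So I will read $C_T$, as the construction implicitly requires, as consisting of proper transfers $(H \lneq G)$. The proof of \zcref{prop-capantis} only uses proper ones anyway, since they come from \zcref{prop-smallerumbrella} with $H \neq G_0$. With this reading the argument closes.

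The main obstacle is just this bookkeeping: the empty-meet case and the properness of the indexing transfers. The substantive work has already been done in \zcref{prop-capantis}.
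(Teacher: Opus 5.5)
Your proposal is correct and is essentially the paper's argument: write $T$ as the intersection in \zcref{prop-capantis} and apply the definition of meet-irreducibility. Your extra bookkeeping is sound and only makes explicit what the paper leaves implicit. The empty meet is the top element, which cannot be meet-irreducible. Restricting $C_T$ to proper transfers does not affect the equality in \zcref{prop-capantis}.
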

\begin{proof}
    Apply \zcref{prop-capantis} and the definition of meet-irreducible. \qedhere
\end{proof}

\begin{prop}
\label{prop:meet-condition}
    Any global transfer system of the form $\underline{\la H \leq G \ra}$ can be written as a meet of meet-irreducible global transfer systems. In light of \zcref{prop-capantis}, any global transfer system can also be written as a meet of meet-irreducible global transfer systems.
\end{prop}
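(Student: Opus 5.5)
We argue by induction on the index of $(H \leq G)$, mirroring the proof of \zcref{prop:join-condition}, but with meets in place of joins and \zcref{prop-smallerumbrella} playing the role that the decomposition into a composition played there.

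For the base case, suppose $[G:H]$ is prime. We claim that $\underline{\la H \leq G \ra}$ is meet-irreducible. Suppose $\underline{\la H \leq G \ra} = \bigcap_{i \in I} T_i$. Since $(H \leq G) \notin \underline{\la H \leq G \ra}$ (it lies in $U_{H\leq G} \subset R_{H \leq G}$), some $T_i$ omits $(H \leq G)$. Because the index is prime, $U_{H\leq G} = \{(H \leq G)\}$. Closure of $T_i$ under restriction then forces $R_{H\leq G} \cap T_i = \varnothing$, exactly as in the base case of \zcref{prop-smallerumbrella}. Hence $T_i \subset \underline{\la H \leq G \ra}$. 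The reverse inclusion $\underline{\la H \leq G \ra} \subset T_i$ comes from the meet, so $T_i = \underline{\la H \leq G \ra}$.

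For the inductive step, suppose $[G:H] = N$ and the claim holds for all smaller indices. If $\underline{\la H \leq G \ra}$ is meet-irreducible we are done. Otherwise write $\underline{\la H \leq G \ra} = \bigcap_{i\in I} T_i$ as a non-trivial meet, so that every $T_i$ strictly contains $\underline{\la H \leq G \ra}$. We must show that $\underline{\la H\leq G\ra}$ equals an intersection of antigenerated systems of strictly smaller index. For each transfer $(L \leq K) \notin \underline{\la H \leq G \ra}$, some $T_i$ omits it. Applying \zcref{prop-smallerumbrella} to $T_i$ and $(L\leq K)$ gives $L \lneq K_0 \leq K$ with $T_i \subset \underline{\la L \leq K_0 \ra}$. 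Then
\[
\underline{\la H \leq G \ra} \subset T_i \subset \underline{\la L \leq K_0 \ra},
\]
and $(L \leq K) \notin \underline{\la L\leq K_0\ra}$, as in the proof of \zcref{prop-capantis}. Intersecting over all omitted $(L\leq K)$ therefore writes $\underline{\la H\leq G\ra}$ as a meet of systems $\underline{\la L \leq K_0\ra}$, and it remains to arrange that each of these has index $<N$, or is itself already a meet of meet-irreducibles.

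The main obstacle is controlling these indices. The plan is as follows. If $\underline{\la H\leq G\ra} \subset \underline{\la L\leq K_0\ra}$, then in particular $R_{L\leq K_0} \supset R_{H\leq G} \ni (H\leq G)$. So there exist $\phi\colon K_0 \to G$ and $L \leq L' \lneq K_0$ with $\phi^{-1}(H)=L'$, and \zcref{lemm:res-index} gives $[K_0:L'] \leq N$. I also expect to be able to replace $(L\leq K_0)$ by a transfer with the same antigenerated system, or a smaller one still containing $T_i$, so that the index drops to $[K_0:L']$.

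The case of index exactly $N$ needs separate treatment. In that case $\phi$ induces a bijection on cosets. I would try to show that this forces $\underline{\la L' \leq K_0\ra} \supseteq \underline{\la H\leq G\ra}$ with the reverse inclusion also holding via $\phi$. That would mean $T_i = \underline{\la H\leq G\ra}$, contradicting the non-triviality of the meet.

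Should this direct index argument fail, the fallback is to induct instead on the pair (order of $G$, index). When $(H\leq G)$ is not meet-irreducible, the witnesses $T_i$ should yield antigenerators with either smaller index or strictly smaller ambient group $K_0 \lneq G$, and in each case the induction hypothesis applies. Finally, \zcref{prop-capantis} gives the general statement.
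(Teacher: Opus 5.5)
Your base case is correct and matches the paper's, but the inductive step has a genuine gap. You separate \emph{every} transfer $(L \leq K) \notin \underline{\la H \leq G \ra}$, and this loses all control over the groups that \zcref{prop-smallerumbrella} produces. Here $K$ is an arbitrary target of a homomorphism out of $G$: for instance $(H \times 1 \leq G \times C_2)$ lies in $R_{H \leq G}$ via $g \mapsto (g,1)$. So nothing bounds $[K_0 : L]$ or $\lvert K_0 \rvert$ by $[G:H]$ or $\lvert G \rvert$, and neither your index induction nor the fallback induction on (order of $G$, index) can be applied.

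Your proposed repair also rests on a reversed inclusion. Since $\underline{\la - \ra}$ is the complement of $R_{-}$, the containment $\underline{\la H \leq G \ra} \subset \underline{\la L \leq K_0 \ra}$ means $R_{L \leq K_0} \subset R_{H \leq G}$, not $R_{L \leq K_0} \supset R_{H \leq G}$. So you cannot conclude $(H \leq G) \in R_{L \leq K_0}$. What you do get is $(L \leq K_0) \in R_{H \leq G}$, i.e.\ some $\phi \colon G \to K_0$ with $H \leq \phi^{-1}(L) \lneq G$. Then \zcref{lemm:res-index} yields only the lower bound $[K_0 : L] \geq [G : \phi^{-1}(L)]$, which is useless here. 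The remaining steps (replacing $(L \leq K_0)$, the separate treatment of index $N$) are stated as expectations rather than proved.

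The missing observation is that you only need to separate the transfers in $U_{H \leq G}$, namely $(H' \leq G)$ with $H \leq H' \lneq G$. By definition every transfer in $R_{H \leq G}$ restricts to one of these, so any global transfer system omitting all of $U_{H \leq G}$ omits all of $R_{H \leq G}$.

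So for each such $H'$ choose some $T_{i_{H'}}$ omitting $(H' \leq G)$, and apply \zcref{prop-smallerumbrella} to that transfer. This gives $H' \lneq G_{H'} \leq G$ with $T_{i_{H'}} \subset \underline{\la H' \leq G_{H'} \ra}$. Moreover $(H' \leq G) \notin \underline{\la H' \leq G_{H'} \ra}$, because the inclusion $G_{H'} \hookrightarrow G$ pulls $H'$ back to $H'$. Hence $\underline{\la H \leq G \ra} = \bigcap_{H'} \underline{\la H' \leq G_{H'} \ra}$.

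Now the ambient group stays inside $G$, so $[G_{H'} : H'] \leq [G : H'] \leq [G : H]$ automatically. Equality holds only if $H' = H$ and $G_H = G$, but then $T_{i_H} \subset \underline{\la H \leq G \ra} \subset T_{i_H}$, contradicting non-triviality of the meet. Thus every index is $< N$ and the induction closes, with no separate index-$N$ analysis and no induction on $\lvert G \rvert$.
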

\begin{proof}
    We prove the statement by induction on the index of the transfer $(H \leq G)$. Assume that the index of $(H \leq G)$ is prime, and that $\underline{\la H \leq G \ra} = \bigcap_{i \in I} T_i$ for some (possibly infinite) intersection of global transfer systems. Then $(H \leq G) \notin \underline{\la H \leq G \ra}$, so it must not belong to $T_j$ for some $j \in I$. Since $U_{H \leq G} = \{(H \leq G)\}$ we have $T_j \subset \underline{\la H \leq G \ra}$ and $T_j = \underline{\la H \leq G \ra}$. Thus $\underline{\la H \leq G \ra}$ is meet-irreducible.

    Assume that the statement of the proposition holds whenever the index of $(H \leq G)$ is $<N$, and consider a transfer $(H \leq G)$ with index $N$. If $\underline{\la H \leq G \ra}$ is irreducible then we are done. If $\underline{\la H \leq G \ra}$ is not irreducible then $\underline{\la H \leq G \ra} = \bigcap_{i \in I} T_i$ for some (possibly infinite) non-trivial intersection of global transfer systems. Each $(H' \leq G) \in U_{H \leq G}$ must not be contained in some $T_i$, we will choose one of those and denote it by $T_{i_{H'}}$. Then by \zcref{prop-smallerumbrella} there exists some group $G_{H'}$ with $H' \leq G_{H'} \leq G$, $H' \neq G_{H'}$, and such that $T_{i_{H'}} \subset\underline{\la H' \leq G_{H'} \ra}$. We have that 
    \[\underline{\la H \leq G \ra} = \bigcap_{i \in I} T_i \subset \bigcap_{(H' \leq G) \in U_{H \leq G}} T_{i_{H'}} \subset \bigcap_{(H' \leq G) \in U_{H \leq G}}\underline{\la H' \leq G_{H'} \ra}.\]
    As in the proof of \zcref{prop-capantis}, we can see that each $(H' \leq G) \in U_{H \leq G}$ is not contained in $\underline{\la H' \leq G_{H'} \ra}$ by restriction, so 
    \[\underline{\la H \leq G \ra} \; = \; \bigcap_{(H' \leq G) \in U_{H \leq G}}\underline{\la H' \leq G_{H'} \ra}.\]
    Now if $G_H=G$ then $\underline{\la H \leq G \ra} = T_{i_{H}}$ which would contradict the fact that the original intersection was non-trivial. Thus the index of $H'$ in $G_{H'}$ is $< N$ for each $(H' \leq G) \in U_{H \leq G}$, and the induction hypothesis shows that each $\underline{\la H' \leq G_{H'} \ra}$ can be written as a meet of meet-irreducibles, and therefore so can $\underline{\la H \leq G \ra}$.
\end{proof}

\subsection{Classifying irreducible transfers}\label{subsec:irrs}

Now that we have checked the conditions of \zcref{thm:FTFCA2}, we know that the structure of the lattice of global transfer systems is completely determined by the sets of join-irreducible and meet-irreducible global transfer systems, and their containment relation. Furthermore, we know that each join-irreducible (resp.\ meet-irreducible) is generated (resp.\ antigenerated) by a single transfer. So the only question left is to determine which transfers give rise to irreducibles in this way.

\begin{lemm}
\label{lemm:endopure-missing}
    Let $(H \lneq G)$ be an endopure transfer. Then $(K \leq G) \notin \overline{\la H \leq G \ra}$ for any $K$ with $H \lneq K \lneq G$.
\end{lemm}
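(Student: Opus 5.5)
The plan is to argue by contradiction, using \zcref{lemm:main-property-generate} to produce an endomorphism of $G$ and then ruling out both possibilities for that endomorphism: non-injective (excluded by endopurity) and injective (excluded by a counting argument).

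Suppose that $(K \leq G) \in \overline{\la H \leq G \ra}$ for some $K$ with $H \lneq K \lneq G$. Apply \zcref{lemm:main-property-generate} with $(H' \leq G') = (H \leq G)$ and with the transfer $(K \leq G)$ in place of $(H \leq G)$. This gives a homomorphism $\phi \colon G \to G$, that is, an endomorphism of $G$, with
\[K \leq \phi^{-1}(H) \lneq G.\]
Since $H \lneq K$, this gives the chain $H \lneq K \leq \phi^{-1}(H) \lneq G$. In particular $H \leq \phi^{-1}(H) \lneq G$, which is exactly the configuration that \zcref{defi:endopure} forbids for non-injective $\phi$.

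We now split into two cases according to whether $\phi$ is injective.
\begin{enumerate}
    \item If $\phi$ is non-injective, the inclusions $H \leq \phi^{-1}(H) \lneq G$ directly contradict the endopurity of $H$ in $G$.
    \item If $\phi$ is injective, then it is an automorphism because $G$ is finite. Hence $\phi^{-1}(H)$ is a subgroup of the same order as $H$. However, $\phi^{-1}(H)$ contains $K$, and $K$ strictly contains $H$, so $|\phi^{-1}(H)| \geq |K| > |H|$. This is a contradiction.
\end{enumerate}
Both cases are impossible, so $(K \leq G) \notin \overline{\la H \leq G \ra}$.

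There is no serious obstacle here. The only point needing care is the injective case, which endopurity does not address; it is exactly where the hypothesis $H \lneq K$ is used, through the order comparison above. The hypothesis $K \lneq G$ is needed so that \zcref{lemm:main-property-generate} applies to a proper transfer and yields the strict inclusion $\phi^{-1}(H) \lneq G$.
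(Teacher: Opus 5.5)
Your proof is correct and is essentially the paper's argument: \zcref{lemm:main-property-generate} yields an endomorphism $\phi$ of $G$ with $H \lneq K \leq \phi^{-1}(H) \lneq G$, which contradicts endopurity. The paper leaves the injective case implicit, and your order count is exactly the detail that closes it: an injective $\phi$ is an automorphism, so $\lvert \phi^{-1}(H) \rvert = \lvert H \rvert < \lvert K \rvert$.
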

\begin{proof}
    If $(K \leq G) \in \overline{\la H \leq G \ra}$ by \zcref{lemm:main-property-generate} we have $\phi \colon G \to G$ with $H\lneq K \leq f\phi^{-1}(H) \lneq G$, which contradicts the endopurity of $(H \leqslant G)$.
\end{proof}

\begin{prop}
\label{prop:endopure-join}
    Let $(H \lneq G)$ be an endopure transfer. Then $\overline{\la H \leq G \ra}$ is a join-irreducible global transfer system.
\end{prop}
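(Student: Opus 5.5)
Suppose $\overline{\la H \leq G \ra} = \bigvee_{i \in I} T_i$ for some family of global transfer systems $T_i$. The plan is to show that $\overline{\la H \leq G \ra} = T_i$ for some $i$. Since each $T_i \subset \overline{\la H \leq G \ra}$, it is enough to find one $i$ with $(H \leq G) \in T_i$.

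Recall that the join is the closure of $\bigcup_i T_i$ under composition. Hence $(H \leq G)$ is a finite composition
\[H = H_0 \lneq H_1 \lneq \dots \lneq H_M = G,\]
where each $(H_{j-1} \leq H_j)$ lies in some $T_{i_j}$. Repeated subgroups can be discarded, so we may assume all inclusions are strict. If $M = 1$ we are done, since then $(H \leq G) \in T_{i_1}$.

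So assume $M \geq 2$ and look at the top step $(H_{M-1} \leq G)$. It lies in $T_{i_M} \subset \overline{\la H \leq G \ra}$, and $H \lneq H_{M-1} \lneq G$. This contradicts \zcref{lemm:endopure-missing}. Therefore $M = 1$, and the join is trivial.

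The degenerate case $I = \varnothing$ needs a separate word. There the join is the trivial transfer system, which contains only the identity transfers. It does not contain $(H \lneq G)$, because $H \neq G$. So $\overline{\la H \leq G \ra}$ is not the empty join, and join-irreducibility holds.

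The only substantive input is \zcref{lemm:endopure-missing}, which rests on \zcref{lemm:main-property-generate} together with endopurity. Given that lemma, this is the same argument as in the prime-index base case of \zcref{prop:join-condition}. The one step needing care is discarding repeated subgroups, which guarantees that the top step is a proper inclusion $H_{M-1} \lneq G$ lying strictly above $H$.
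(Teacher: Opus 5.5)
Your proof is correct and matches the paper's argument: write $(H \leq G)$ as a composition of transfers from $\bigcup_i T_i$, and apply \zcref{lemm:endopure-missing} to the top non-trivial step to force it to be $(H \leq G)$ itself, so that $(H \leq G)$ lies in some $T_i$. Your explicit treatment of the empty join, and your remark about discarding repeated subgroups, are small details the paper leaves implicit.
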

\begin{proof}
    Assume that $\overline{\la H \leq G \ra}=\bigvee_i T_i$ for global transfer systems $T_i$. Then $(H \leq G)$ can be written as a composition of non-trivial transfers in $\cup_i T_i$. Let $(H_0 \lneq G)$ with $H \leq H_0$ be the top non-trivial transfer in this composition, then $(H_0 \leq G) \in \cup_i T_i \subset \overline{\la H \leq G \ra}$ so by \zcref{lemm:endopure-missing} we have $H_0=H$, thus $(H \leq G)$ is in one of the $T_i$, and $\overline{\la H \leq G \ra}$ equals $T_i$.
\end{proof}

\begin{defi}
    Let $H \lneq G$. We say that the transfer $(H \leq G)$ is a \emph{minimal generator} if for every $H' \leq G'$ with $\overline{\la H' \leq G' \ra} = \overline{\la H \leq G \ra}$ we have that $[G' \colon H'] \geq [G \colon H]$ and $\lvert G' \rvert \geq \lvert G \rvert$.

    In other words, among all transfers that antigenerate the same global transfer system, the transfer $(H \leq G)$ has minimal index and its top group has minimal order.
\end{defi}

\begin{prop}
\label{prop:endopure-minimal}
    Let $(H \lneq G)$ be an endopure transfer. Then $(H \leqslant G)$ is a minimal generator.
\end{prop}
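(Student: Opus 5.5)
Let $(H' \leq G')$ be any transfer with $\overline{\la H' \leq G' \ra} = \overline{\la H \leq G \ra}$. We must show $[G' \colon H'] \geq [G \colon H]$ and $\lvert G' \rvert \geq \lvert G \rvert$. The plan is to exploit the two memberships $(H \leq G) \in \overline{\la H' \leq G' \ra}$ and $(H' \leq G') \in \overline{\la H \leq G \ra}$, and then use endopurity to force the relevant homomorphisms to be injective.

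The first step is to show that $(H \leq G)$ is an honest restriction of $(H' \leq G')$. By \zcref{lemm:main-property-generate} applied to $(H \leq G) \in \overline{\la H' \leq G' \ra}$, there is a homomorphism $\phi \colon G \to G'$ with $H \leq \phi^{-1}(H') \lneq G$. The transfer $(\phi^{-1}(H') \leq G)$ is a restriction of $(H' \leq G')$, so it lies in $\overline{\la H' \leq G' \ra} = \overline{\la H \leq G \ra}$. By \zcref{lemm:endopure-missing}, no $K$ with $H \lneq K \lneq G$ has $(K \leq G)$ in this system, so $\phi^{-1}(H') = H$. Thus $(H \leq G)$ is a restriction of $(H' \leq G')$, and \zcref{lemm:res-index} gives $[G' \colon H'] \geq [G \colon H]$.

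For the order bound, I would apply \zcref{lemm:main-property-generate} in the other direction to $(H' \leq G') \in \overline{\la H \leq G \ra}$. This yields $\psi \colon G' \to G$ with $H' \leq \psi^{-1}(H) \lneq G'$. Now consider the endomorphism $\theta = \psi \circ \phi \colon G \to G$. We have $\theta^{-1}(H) = \phi^{-1}(\psi^{-1}(H)) \supseteq \phi^{-1}(H') = H$. Also $\theta^{-1}(H) \neq G$: otherwise $\phi(G) \leq \psi^{-1}(H)$, so $\phi^{-1}(\psi^{-1}(H)) = G$. I then need to rule this out, and this is the delicate point. If $\theta^{-1}(H)$ is proper, then $H \leq \theta^{-1}(H) \lneq G$, and endopurity forces $\theta$ to be injective. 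Hence $\phi$ is injective, so $\lvert G \rvert \leq \lvert G' \rvert$.

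The main obstacle is the case $\phi(G) \leq \psi^{-1}(H)$. Since $\phi^{-1}(H') = H \neq G$, the image $\phi(G)$ is not contained in $H'$, but it might sit inside the larger group $\psi^{-1}(H)$. To handle this, I would choose $\psi$ more carefully using the structure of the composition. Write $(H' \leq G')$ as a composition $H' = K_0 \leq \dots \leq K_m = G'$ of restrictions of $(H \leq G)$, with restricting maps $\psi_j \colon K_j \to G$ satisfying $\psi_j^{-1}(H) = K_{j-1}$. Let $j$ be the largest index with $\phi(G) \leq K_j$ but $\phi(G) \not\leq K_{j-1}$; such $j$ exists because $\phi(G) \leq G'$ and $\phi(G) \not\leq H'$. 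Restricting $\psi_j$ to $\phi(G)$ and setting $\theta = \psi_j \circ \phi \colon G \to G$, we get $\theta^{-1}(H) = \phi^{-1}(K_{j-1})$. This contains $H$, since $\phi(H) \leq H' \leq K_{j-1}$, and it is proper, since $\phi(G) \not\leq K_{j-1}$. Endopurity then makes $\theta$ injective, hence $\phi$ is injective and $\lvert G' \rvert \geq \lvert G \rvert$. I expect checking this choice of $j$ to be the only real care needed.
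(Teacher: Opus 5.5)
Your proof is correct and is essentially the paper's own argument. You get $\phi^{-1}(H')=H$ from \zcref{lemm:main-property-generate} and \zcref{lemm:endopure-missing}, and then compose $\phi$ with the restricting map at the lowest step $K_j$ of the chain that contains $\phi(G)$, so that endopurity forces $\psi_j\circ\phi$, and hence $\phi$, to be injective. The exploratory attempt with a single $\psi$ (whose sentence ``Also $\theta^{-1}(H)\neq G$'' reads as an unproved assertion) can simply be deleted, since the refined choice of $j$ handles everything.
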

\begin{proof}
    Assume that we have $H', G'$ with $\overline{\la H \leq G \ra} = \overline{\la H' \leq G' \ra}$. By \zcref{lemm:main-property-generate} there exists a homomorphism $\phi \colon G \to G'$ with $H \leq \phi^{-1}(H') \lneq G$, so that $(\phi^{-1}(H') \leq G) \in \overline{\la H' \leq G' \ra} = \overline{\la H \leq G \ra}$, and by \zcref{lemm:endopure-missing} we must have $\phi^{-1}(H')=H$. Then by \zcref{lemm:res-index} we obtain that $[G' \colon H'] \geq [G \colon H]$.

    Since $(H' \leq G') \in \overline{\la H \leq G \ra}$, we can write the former as a composition $H' =H'_0 \lneq H'_1 \lneq \dots \lneq G'$ of non-trivial transfers which are restrictions of $(H \leq G)$. Let $H'_i$ be the smallest of these intermediate groups that satisfies that $\phi^{-1}(H'_i)=G$ but $\phi^{-1}(H'_{i-1}) \lneq G$ (this is well defined because $\phi^{-1}(G')=G$ and $\phi^{-1}(H'_0)=H$). Since $(H'_{i-1} \leq H'_i)$ is a restriction of $(H \leq G)$, we have a homomorphism $\psi \colon H'_i \to G$ with $\psi^{-1}(H)=H'_{i-1}$. Then the composition $\psi \circ \phi \colon G \to G$ is well defined, and satisfies that $\phi^{-1}(\psi^{-1}(H))=\phi^{-1}(H'_{i-1})\lneq G$ and $H= \phi^{-1}(H') \leq \phi^{-1}(H'_{i-1})=\phi^{-1}(\psi^{-1}(H))$. Thus $\psi \circ \phi$ must be injective by endopurity of $H \leq G$, then $\phi$ is also injective and $\lvert G' \rvert \geq \lvert G \rvert$.
\end{proof}

\begin{prop}
\label{prop:minimal-plus-irre-endopure}
    Let $(H \lneq G)$ be such that $\overline{\la H \leq G \ra}$ is join-irreducible. Assume that for every $H' \leq G'$ with $\overline{\la H' \leq G' \ra} = \overline{\la H \leq G \ra}$ we have that $[G' \colon H'] > [G \colon H]$, or $[G' \colon H'] = [G \colon H]$ and $\lvert G' \rvert \geq \lvert G \rvert$. Then $H \leq G$ is endopure.
\end{prop}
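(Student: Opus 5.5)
The plan is to argue by contraposition. Suppose $(H \lneq G)$ satisfies the hypotheses but is not endopure. Then there is a non-injective endomorphism $\phi \colon G \to G$ with $H \leq H_0 \coloneq \phi^{-1}(H) \lneq G$. I would split into two cases according to whether $H_0 = H$ or $H \lneq H_0$. In each case I produce another transfer generating the same global transfer system $T = \overline{\la H \leq G \ra}$ that is strictly smaller in the lexicographic order (index, then order of the top group). This contradicts the minimality assumption.

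\textbf{Case $H \lneq H_0 \lneq G$.} First, $(H_0 \leq G)$ is a restriction of $(H \leq G)$ along $\phi$, so it lies in $T$. Second, $(H \leq H_0)$ is a restriction of $(H \leq G)$ along the inclusion $H_0 \hookrightarrow G$, since the preimage of $H$ under this inclusion is $H \cap H_0 = H$; so it also lies in $T$. Hence $(H \leq G)$ is the composition of two transfers in $T$, and
\[T = \overline{\la H \leq H_0 \ra} \vee \overline{\la H_0 \leq G \ra}.\]
By join-irreducibility, $T$ equals one of the two joinands. However, both $[H_0 \colon H]$ and $[G \colon H_0]$ are strictly smaller than $[G \colon H]$, since $H \lneq H_0 \lneq G$. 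This contradicts the hypothesis that every generator of $T$ has index at least $[G \colon H]$.

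\textbf{Case $H_0 = H$.} Here $\phi^{-1}(H) = H$, so $\ker \phi \leq H$. Set $G' = \phi(G)$ and $H' = H \cap G'$. Corestricting $\phi$ gives a surjection $\bar\phi \colon G \to G'$, and
\[\bar\phi^{-1}(H') = \phi^{-1}(H) = H,\]
so $(H \leq G)$ is a restriction of $(H' \leq G')$. Conversely, $(H' \leq G')$ is a restriction of $(H \leq G)$ along the inclusion $G' \hookrightarrow G$. The two transfers are therefore mutual restrictions and generate the same global transfer system $T$. Since $\bar\phi$ is surjective and $H = \bar\phi^{-1}(H')$ contains $\ker \bar\phi$, the correspondence theorem gives $[G' \colon H'] = [G \colon H]$. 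On the other hand $|G'| < |G|$ because $\phi$ is non-injective, and $H' \lneq G'$ since the index is unchanged. This contradicts the hypothesis in the equal-index case.

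I expect the main subtlety to lie in the second case. One has to check that $\ker\phi \leq H$ really forces the index to be preserved, so that the contradiction comes from the order clause rather than the index clause; this is exactly why the hypothesis is phrased lexicographically. The first case is a direct application of join-irreducibility once both pieces are recognised as restrictions of $(H \leq G)$.
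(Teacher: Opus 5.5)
Your proof is correct and follows the paper's argument: you write $\overline{\la H \leq G \ra}$ as the join of $\overline{\la H \leq \phi^{-1}(H) \ra}$ and $\overline{\la \phi^{-1}(H) \leq G \ra}$, use join-irreducibility and minimality of the index to force $\phi^{-1}(H) = H$, and then pass to $(H \cap \Img \phi \leq \Img \phi)$ to contradict minimality of the top group's order. The only differences are cosmetic: you make the case split explicit, and you get the index equality from the correspondence theorem instead of applying the restriction-index lemma in both directions.
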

    Here it is not enough to consider just the case where $H \lneq G$ is a minimal antigenerator, we need the slightly weaker assumption taken in the statement, otherwise we would not be able to construct a minimal generator for a join-irreducible in the next proposition.
\begin{proof}[Proof of \zcref{prop:minimal-plus-irre-endopure}]
    Assume for a contradiction that $H \leq G$ is not endopure. This means that there exists a non-injective endomorphism $\phi \colon G \to G$ with $H \leq \phi^{-1}(H) \lneq G$. Then \[\overline{\la H \leq G \ra} = \overline{\la H \leq \phi^{-1}(H) \ra} \vee \overline{\la \phi^{-1}(H) \leq G \ra}\]
    where one containment is given by composition and the other by restriction along $\phi$ and along the inclusion of $\phi^{-1}(H)$ in $G$. By minimality of the index we cannot have $\overline{\la H \leq G \ra} = \overline{\la H \leq \phi^{-1}(H) \ra}$. Thus since $\overline{\la H \leq G \ra}$ is join-irreducible we have $\overline{\la H \leq G \ra} = \overline{\la \phi^{-1}(H) \leq G \ra}$, and again by minimality of the index $\phi^{-1}(H)=H$.

    Consider now the transfer $(H \cap \Img \phi \leq \Img \phi)$. Since $\phi^{-1}(H)=H$ we have $\overline{\la H \cap \Img \phi \leq \Img \phi \ra} = \overline{\la H \leq G \ra}$, via restriction to a subgroup for one direction, and restriction along $\phi$ for the other. These two transfers must have the same index by \zcref{lemm:res-index}, so $\lvert \Img \phi \rvert \geq \lvert G \rvert$ by assumption. However this contradicts the fact that $\phi$ is not injective.\qedhere
\end{proof}

\begin{prop}
\label{prop:join-minimal}
    Let $T$ be a join-irreducible global transfer system. Then there exists an endopure transfer $(H \leqslant G)$ such that $T = \overline{\la H \leq G \ra}$.
\end{prop}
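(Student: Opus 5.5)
By \zcref{prop:join-irre-form}, $T = \overline{\la H_1 \leq G_1 \ra}$ for some transfer $(H_1 \leq G_1)$, and this transfer is non-trivial, since otherwise $T$ would be the bottom element, which is the empty join. Among all non-trivial transfers $(H' \leq G')$ with $\overline{\la H' \leq G' \ra} = T$, I pick one, $(H \leq G)$, that is lexicographically minimal for the pair (index $[G'\colon H']$, order $\lvert G' \rvert$). The goal is then to apply \zcref{prop:minimal-plus-irre-endopure} to this choice.

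The minimum exists because indices and orders are positive integers, so the set of such pairs is a non-empty subset of $\mathbb{N}\times\mathbb{N}$ under the lexicographic order, which is a well-order. The minimization is over the class of all transfers in $\fami$. This causes no set-theoretic problem, since we only minimize an $\mathbb{N}\times\mathbb{N}$-valued function.

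By construction, every $(H' \leq G')$ with $\overline{\la H' \leq G' \ra} = \overline{\la H \leq G \ra}$ satisfies either $[G'\colon H'] > [G\colon H]$, or $[G'\colon H'] = [G\colon H]$ and $\lvert G'\rvert \geq \lvert G \rvert$. This is exactly the hypothesis of \zcref{prop:minimal-plus-irre-endopure}. Since $\overline{\la H \leq G \ra} = T$ is join-irreducible and $H \lneq G$, that proposition gives that $(H \leq G)$ is endopure, which proves the statement.

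The only delicate point is choosing the right minimality notion. The remark after \zcref{prop:minimal-plus-irre-endopure} warns that minimizing the index and the order separately may not be simultaneously achievable. Lexicographic minimization avoids this and yields precisely the weaker hypothesis that the proposition requires, so no further obstacle arises.
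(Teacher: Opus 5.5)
Your proposal is correct and is essentially the paper's proof. The paper also takes a single generator from \zcref{prop:join-irre-form}, first minimizes the index, then minimizes the order of the top group among minimal-index generators (which is your lexicographic minimization), and concludes with \zcref{prop:minimal-plus-irre-endopure}; your remarks on non-triviality (the bottom element is the empty join, so it is not join-irreducible) and on the well-ordering just spell out details the paper leaves implicit.
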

\begin{proof}
    The global transfer system $T$ must be generated by a single transfer $(H \leq G)$ by \zcref{prop:join-irre-form}. If this transfer does not have minimal index among those that generate $T$, we can replace it with one that has strictly smaller index and generates $T$, so we can assume that $(H \leq G)$ has minimal index. Then among those transfers that generate $T$ and have minimal index, we can choose one transfer with minimal order of the top group. This transfer will satisfy the conditions of \zcref{prop:minimal-plus-irre-endopure}, thus it will be endopure.
\end{proof}

\begin{prop}
\label{prop:join-unique}
    Let $H \lneq G$ be endopure, and assume that $\overline{\la H \leq G \ra} = \overline{\la H' \leq G' \ra}$ with $[G' \colon H'] = [G \colon H]$ and $\lvert G \rvert = \lvert G' \rvert$. Then there is an isomorphism $\phi \colon G \to G'$ with $\phi(H) = H'$, so the two transfers are isomorphic.
\end{prop}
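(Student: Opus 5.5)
We reuse the argument of \zcref{prop:endopure-minimal}. Since $(H' \leq G') \in \overline{\la H \leq G \ra}$ and $(H \leq G) \in \overline{\la H' \leq G' \ra}$, applying \zcref{lemm:main-property-generate} to the latter gives a homomorphism $\phi \colon G \to G'$ with $H \leq \phi^{-1}(H') \lneq G$. Then $(\phi^{-1}(H') \leq G)$ is a restriction of $(H' \leq G')$, so it lies in $\overline{\la H \leq G \ra}$. By \zcref{lemm:endopure-missing} we get $\phi^{-1}(H') = H$.

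Next we show $\phi$ is injective, exactly as in the second half of the proof of \zcref{prop:endopure-minimal}. Write $(H' \leq G')$ as a chain $H' = H'_0 \lneq H'_1 \lneq \dots \lneq H'_m = G'$ of non-trivial restrictions of $(H \leq G)$. Pick the smallest $i$ with $\phi^{-1}(H'_i) = G$, so that $\phi^{-1}(H'_{i-1}) \lneq G$. Since $(H'_{i-1} \leq H'_i)$ is a restriction of $(H \leq G)$, there is $\psi \colon H'_i \to G$ with $\psi^{-1}(H) = H'_{i-1}$. The endomorphism $\psi\circ\phi$ of $G$ then satisfies $H \leq (\psi\circ\phi)^{-1}(H) \lneq G$, so endopurity forces $\psi \circ \phi$, and hence $\phi$, to be injective. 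Because $\lvert G \rvert = \lvert G' \rvert$, $\phi$ is an isomorphism $G \to G'$.

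Finally, $\phi^{-1}(H') = H$ with $\phi$ bijective gives $\phi(H) = H' \cap \phi(G) = H'$. So the two transfers are isomorphic. We could also cross-check orders via the index equality, since $[G\colon H] = [G' \colon H']$ and $|G| = |G'|$ give $|H| = |H'|$, but this is not needed.

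The main obstacle is the injectivity step, in particular choosing the index $i$ correctly: we need $\phi(G) \leq H'_i$ so that $\psi\circ\phi$ is defined, together with $\phi^{-1}(H'_{i-1}) \lneq G$. This is guaranteed by minimality of $i$ together with $\phi^{-1}(H'_0) = H \neq G$. The remaining steps are immediate from the earlier lemmas.
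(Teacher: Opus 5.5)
Your proof is correct, but it gets injectivity of $\phi$ by a different route from the paper.

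\textbf{The paper's route.} It first shows that $(H' \leq G')$ is itself endopure. Since $(H \leq G)$ is a minimal generator by \zcref{prop:endopure-minimal}, the index and order hypotheses make $(H' \leq G')$ satisfy the hypotheses of \zcref{prop:minimal-plus-irre-endopure}. The symmetric argument then produces a global map $\psi \colon G' \to G$ with $\psi^{-1}(H) = H'$. This gives $(\psi \circ \phi)^{-1}(H) = H$, and endopurity of $H \leq G$ forces $\psi \circ \phi$ to be injective.

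\textbf{Your route.} You rerun the chain argument from the proof of \zcref{prop:endopure-minimal}. This yields only a partial map $\psi \colon H'_i \to G$, but it requires nothing about $H' \leq G'$.

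\textbf{Comparison.} Your approach is more economical. It avoids \zcref{prop:minimal-plus-irre-endopure} entirely, and it shows that the index hypothesis is redundant: only $\lvert G \rvert = \lvert G' \rvert$ is used. In effect, the proposition becomes a corollary of the proof of \zcref{prop:endopure-minimal}.

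The paper's approach has its own advantages. It establishes endopurity of $H' \leq G'$ along the way. It is also built to mirror \zcref{prop:meet-unique}. In that dual setting antigenerated systems have no composition chains, so your chain argument is unavailable, and the symmetric argument with both transfers endopure is what carries over.
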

\begin{proof}
    Since $(H \leq G)$ is endopure, it is a minimal generator by \zcref{prop:endopure-minimal}. Thus $H' \leq G'$ is a minimal generator of a join-irreducible global transfer system, so it is also endopure by \zcref{prop:minimal-plus-irre-endopure}.

    By \zcref{lemm:main-property-generate} there exists a homomorphism $\phi \colon G \to G'$ with $H \leq \phi^{-1}(H') \lneq G$, so that $(\phi^{-1}(H') \leq G) \in \overline{\la H' \leq G' \ra} = \overline{\la H \leq G \ra}$, and by \zcref{lemm:endopure-missing} we must have $\phi^{-1}(H')=H$. By the same argument we obtain $\psi \colon G' \to G$ satisfying that $\psi^{-1}(H)=H'$. Then $\psi \circ \phi$ is injective by endopurity of $H \leq G$, thus $\phi$ is injective, and an isomorphism with $\phi(H) = H'$ since $\lvert G \rvert = \lvert G' \rvert$.
\end{proof}

Putting all of the above together yields that any join-irreducible global transfer system is generated by a single endopure minimal generator.

\begin{prop}
\label{prop:join-bijection}
    The assignment that sends an endopure transfer $H \leq G$ to $\overline{\la H \leq G \ra}$ is a bijection between the set of isomorphism classes of endopure transfers and the set of join-irreducible global transfer systems.
\end{prop}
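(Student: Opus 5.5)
The plan is to assemble the preceding propositions, checking well-definedness, image, surjectivity and injectivity in turn.

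First, the assignment is well defined on isomorphism classes. Isomorphic transfers are mutual restrictions, so they generate the same global transfer system. By \zcref{prop:endopure-join}, $\overline{\la H \leq G \ra}$ is join-irreducible for every endopure $(H \lneq G)$. Hence we obtain a map from isomorphism classes of endopure transfers to $J(\mathsf{Tr}^\mathsf{Gl}(\fami))$.

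Surjectivity is exactly \zcref{prop:join-minimal}: every join-irreducible $T$ is of the form $\overline{\la H \leq G \ra}$ for some endopure $(H \leq G)$.

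For injectivity, suppose $(H \leq G)$ and $(H' \leq G')$ are both endopure with $\overline{\la H \leq G \ra} = \overline{\la H' \leq G' \ra}$. By \zcref{prop:endopure-minimal} both transfers are minimal generators of this common global transfer system. Applying the definition of minimal generator in both directions gives $[G' \colon H'] \geq [G \colon H]$ and $[G \colon H] \geq [G' \colon H']$, so the indices agree. The same symmetric argument gives $\lvert G \rvert = \lvert G' \rvert$. The hypotheses of \zcref{prop:join-unique} are then satisfied, and it produces an isomorphism $\phi \colon G \to G'$ with $\phi(H) = H'$. So the two transfers lie in the same isomorphism class, which proves injectivity.

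I expect no real obstacle, since all the substantive work is in the earlier propositions. The one point to check carefully is that minimality must be applied symmetrically, using endopurity of both transfers, so that both equalities of index and order needed in \zcref{prop:join-unique} are obtained.
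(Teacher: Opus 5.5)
Your proposal is correct and follows the paper's proof exactly: join-irreducibility comes from Proposition~\ref{prop:endopure-join}, surjectivity from Proposition~\ref{prop:join-minimal}, and injectivity from Proposition~\ref{prop:endopure-minimal} combined with Proposition~\ref{prop:join-unique}. Your explicit symmetric use of minimality to get equal index and equal order, together with the well-definedness check on isomorphism classes, spells out steps the paper leaves implicit.
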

\begin{proof}
    If $H \leq G$ is endopure then $\overline{\la H \leq G \ra}$ is join-irreducible by \zcref{prop:endopure-join}. This assignment is surjective by \zcref{prop:join-minimal}. It is injective by \zcref{prop:endopure-minimal} and \zcref{prop:join-unique}.    
\end{proof}

We can also prove the dual statements of all of the above for meet-irreducible elements. Note that although most of the statements in the rest of this subsection are analogous to those that we just proved for join-irreducibles, some require different arguments. To that end, we will need a few technical results related to endopure transfers and antigenerated global transfer systems, which we check now. 

\begin{lemm}
\label{lemm:proper-umbrella}
    Let $(H \lneq G)$ be endopure. Then $(H \leq G_0) \in \underline{\la H \leq G \ra}$ for each $G_0$ with $H \leq G_0 \lneq G$.
\end{lemm}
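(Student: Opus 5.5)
The plan is to unwind \zcref{constr-antigen} and reduce directly to the definition of endopurity. By \zcref{constr-antigen}, the transfer $(H \leq G_0)$ fails to lie in $\underline{\la H \leq G \ra}$ exactly when there is a homomorphism $\phi \colon G \to G_0$ with $H \leq \phi^{-1}(H) \lneq G$. So I will assume such a $\phi$ exists and derive a contradiction with the endopurity of $H$ in $G$.

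First, let $\iota \colon G_0 \hookrightarrow G$ be the inclusion, and set $\psi \coloneq \iota \circ \phi \in \mathrm{End}(G)$. The map $\psi$ is non-injective for cardinality reasons. Its image lies in $G_0$, and $G_0 \lneq G$ means $\lvert G_0 \rvert < \lvert G \rvert$, so $\psi$ cannot be injective on $G$.

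Next, I compute the preimage under $\psi$:
\[\psi^{-1}(H) = \phi^{-1}(\iota^{-1}(H)) = \phi^{-1}(H \cap G_0) = \phi^{-1}(H).\]
The last equality uses $H \leq G_0$. Hence $H \leq \psi^{-1}(H) \lneq G$ with $\psi$ a non-injective endomorphism of $G$. This contradicts \zcref{defi:endopure}.

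Therefore no such $\phi$ exists, and $(H \leq G_0) \in \underline{\la H \leq G \ra}$. The degenerate case $G_0 = H$ is covered by the same argument and needs no separate treatment. The only point needing care is the identity $\iota^{-1}(H) = H$, which depends on $H \leq G_0$; this is what lets the composite $\psi$ witness non-endopurity. I expect no serious obstacle.
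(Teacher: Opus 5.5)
Your proposal is correct and follows the paper's proof: negate membership via \zcref{constr-antigen} to get $\phi \colon G \to G_0$ with $H \leq \phi^{-1}(H) \lneq G$, then compose with the inclusion $G_0 \lneq G$ to obtain a non-injective endomorphism of $G$ contradicting endopurity. You also make explicit the identity $\psi^{-1}(H) = \phi^{-1}(H)$ (using $H \leq G_0$) and the cardinality reason for non-injectivity, both of which the paper leaves implicit.
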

\begin{proof}
    Assume that $(H \leq G_0) \notin \underline{\la H \leq G \ra}$, then by construction we have a homomorphism $\phi \colon G \to G_0$ with $H \leq \phi^{-1}(H) \lneq G$, and then composing $\phi$ with the subgroup inclusion $G_0\lneq G$ yield a non-injective homomorphism that contradicts the fact that $H \leq G$ is endopure.
\end{proof}

\begin{lemm}
\label{lemm:antigenerator-missing}
    Let $(H \lneq G)$ be endopure, and let $T$ be a global transfer system. If $\underline{\la H \leq G \ra} \subset T$ and $(H \leq G) \notin T$ then $T = \underline{\la H \leq G \ra}$
\end{lemm}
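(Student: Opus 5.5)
The plan is to show the reverse inclusion $T \subset \underline{\la H \leq G \ra}$, since $\underline{\la H \leq G \ra} \subset T$ is a hypothesis. The natural tool is \zcref{prop-smallerumbrella}, which turns the missing transfer into an antigenerated system containing $T$. The only thing left to control is which subgroup $G_0$ that proposition produces, and \zcref{lemm:proper-umbrella} should pin it down.

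First, since $(H \leq G) \notin T$, I apply \zcref{prop-smallerumbrella}. This gives a group $G_0$ with $H \leq G_0 \leq G$ and $H \neq G_0$ such that $T \subset \underline{\la H \leq G_0 \ra}$.

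Second, I argue by cases on $G_0$.
\begin{itemize}
\item If $G_0 = G$, then $T \subset \underline{\la H \leq G \ra}$, and together with the hypothesis this gives equality.
\item Suppose instead $H \lneq G_0 \lneq G$. Endopurity of $(H \leq G)$ and \zcref{lemm:proper-umbrella} give $(H \leq G_0) \in \underline{\la H \leq G \ra} \subset T \subset \underline{\la H \leq G_0 \ra}$. However, $(H \leq G_0)$ lies in $U_{H \leq G_0} \subset R_{H \leq G_0}$, by taking the identity homomorphism $G_0 \to G_0$ in \zcref{constr-antigen}. Since $H \lneq G_0$, this means $(H \leq G_0) \notin \underline{\la H \leq G_0 \ra}$, which is a contradiction.
\end{itemize}
Hence $G_0 = G$, and the first case applies.

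I do not expect a real obstacle. The only point to check is that \zcref{lemm:proper-umbrella} applies, which needs $H \leq G_0 \lneq G$; this is exactly the case being excluded. The other point is that a nontrivial transfer $(H \lneq G_0)$ never belongs to its own antigenerated system, which follows directly from the inclusion $U_{H \leq G_0} \subset R_{H \leq G_0}$ noted in \zcref{constr-antigen}.
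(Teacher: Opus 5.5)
Your proof is correct and is the same as the paper's. You apply \zcref{prop-smallerumbrella} to obtain $G_0$, then use \zcref{lemm:proper-umbrella} to rule out $G_0 \neq G$ via the chain $(H \leq G_0) \in \underline{\la H \leq G \ra} \subset T \subset \underline{\la H \leq G_0 \ra}$, and your observation that $(H \lneq G_0)$ lies in $U_{H \leq G_0} \subset R_{H \leq G_0}$ just makes explicit the contradiction the paper leaves implicit.
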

\begin{proof}
    If $(H \leq G) \notin T$ then by \zcref{prop-smallerumbrella} there is a subgroup $G_0$ with $H \lneq G_0 \leq G$ and $T \subset \underline{\la H \leq G_0 \ra}$. By \zcref{lemm:proper-umbrella} if $G_0 \neq G$ then $(H \leq G_0) \in \underline{\la H \leq G \ra} \subset T \subset \underline{\la H \leq G_0 \ra}$ which is a contradiction, thus $G_0=G$  and $\underline{\la H \leq G \ra} \subset T \subset \underline{\la H \leq G \ra}$.
\end{proof}

\begin{prop}
\label{prop:endopure-meet}
    Let $(H \lneq G)$ be an endopure transfer. Then $\underline{\la H \leq G \ra}$ is a meet-irreducible global transfer system.
\end{prop}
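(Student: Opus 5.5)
We argue directly from the definition of meet-irreducibility, using \zcref{lemm:antigenerator-missing} as the key input. Suppose $\underline{\la H \leq G \ra} = \bigcap_{i \in I} T_i$ for some (possibly infinite) family of global transfer systems $T_i$. We must show that $\underline{\la H \leq G \ra} = T_j$ for some $j \in I$.

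First, $(H \leq G) \notin \underline{\la H \leq G \ra}$. Indeed, the identity $\mathrm{id}_G$ satisfies $\mathrm{id}_G^{-1}(H) = H$, and $H \leq H \lneq G$, so $(H \leq G) \in U_{H \leq G} \subset R_{H \leq G}$. Since the intersection of the $T_i$ does not contain $(H \leq G)$, there is some index $j \in I$ with $(H \leq G) \notin T_j$.

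Second, $\underline{\la H \leq G \ra} = \bigcap_i T_i \subset T_j$. Thus $T_j$ is a global transfer system that contains $\underline{\la H \leq G \ra}$ but not $(H \leq G)$. Since $(H \leq G)$ is endopure, \zcref{lemm:antigenerator-missing} applies directly and gives $T_j = \underline{\la H \leq G \ra}$. This proves meet-irreducibility.

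The case of an empty index set $I$ deserves a brief check. The empty meet is the top element $\mathbb{I}(\fami)$, and $\underline{\la H \leq G \ra} \neq \mathbb{I}(\fami)$ because it misses $(H \leq G)$. So the empty meet cannot equal $\underline{\la H \leq G \ra}$, and this case does not arise.

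There is no substantial obstacle, since the real work (\zcref{prop-smallerumbrella} together with \zcref{lemm:proper-umbrella}) is already packaged into \zcref{lemm:antigenerator-missing}. The only points needing care are the observation that $(H \leq G)$ is missing from its own antigenerated system, and the empty-meet case above.
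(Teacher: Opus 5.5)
Your proposal is correct and follows the paper's proof exactly: $(H \leq G)$ must be missing from some $T_j \supseteq \underline{\la H \leq G \ra}$, and \zcref{lemm:antigenerator-missing} then forces $T_j = \underline{\la H \leq G \ra}$. Your extra checks, that $(H \leq G) \notin \underline{\la H \leq G \ra}$ via the identity and that the empty meet cannot occur, are valid details the paper leaves implicit.
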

\begin{proof}
    If $\underline{\la H \leq G \ra} = \bigcap_{i \in I} T_i$, then the transfer $(H \leq G)$ must be missing from at least one of the global transfer systems $T_i$, which must then be equal to $\underline{\la H \leq G \ra}$ by \zcref{lemm:antigenerator-missing}.
\end{proof}

It is immediate that two transfers that are mutual restrictions generate the same global transfer system. It is however not so straightforward to see that they antigenerate the same global transfer system. This is because it is not universally true that if $(\phi^{-1}(L) \leq G)$ is a restriction of $(L \leq K)$, then $\underline{\la \phi^{-1}(L) \leq G \ra} \subset \underline{\la L \leq K \ra}$. This will hold if both transfers have the same index though.

\begin{lemm}
    \label{lemm:contain-antigenerated}
    Let $\phi \colon G \to K$ and let $L \leq K$. If $[K \colon L] = [G \colon \phi^{-1}(L)]$ then $\underline{\la \phi^{-1}(L) \leq G \ra} \subset \underline{\la L \leq K \ra}$.
\end{lemm}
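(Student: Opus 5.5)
We argue via complements, using the description of antigenerated systems in \zcref{constr-antigen}. Write $H_0 = \phi^{-1}(L)$. We show that $R_{L \leq K} \subset R_{H_0 \leq G}$, equivalently that every transfer $(A \leq B)$ not lying in $\underline{\la L \leq K \ra}$ also fails to lie in $\underline{\la H_0 \leq G \ra}$. Taking complements in $\mathbb{I}(\fami)$ then gives the claimed containment.

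So suppose $(A \leq B) \notin \underline{\la L \leq K \ra}$. By \zcref{constr-antigen} there is a homomorphism $\psi \colon K \to B$ with $L \leq \psi^{-1}(A) \lneq K$. The natural candidate witnessing $(A \leq B) \notin \underline{\la H_0 \leq G \ra}$ is the composite $\psi \circ \phi \colon G \to B$. We have
\[(\psi\circ\phi)^{-1}(A) = \phi^{-1}(\psi^{-1}(A)) \geq \phi^{-1}(L) = H_0,\]
so the lower bound holds automatically. It remains to check that $\phi^{-1}(\psi^{-1}(A)) \neq G$.

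This last step is the only real point, and it is where the index hypothesis enters. Set $M = \psi^{-1}(A)$, so that $L \leq M \lneq K$. As in the proof of \zcref{lemm:res-index}, $\phi$ induces an injective map of coset sets $G/H_0 \to K/L$. Since $[G \colon H_0] = [K \colon L]$, this map is a bijection, so $\phi(G)L = K$; that is, every coset of $L$ in $K$ meets the image of $\phi$.

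Now suppose $\phi^{-1}(M) = G$. Then $\phi(G) \leq M$. Since $L \leq M$, we get $K = \phi(G)L \leq M$, contradicting $M \lneq K$. Hence $H_0 \leq (\psi\circ\phi)^{-1}(A) \lneq G$, which says exactly that $(A \leq B) \in R_{H_0 \leq G}$, and the proof is complete.
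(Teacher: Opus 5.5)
Your proof is correct and is essentially the paper's argument: both pull the witnessing subgroup $L \leq M \lneq K$ back along $\phi$, and both use the index hypothesis to conclude that $\phi^{-1}(M) \neq G$. The differences are cosmetic. The paper checks only the transfers in $U_{L \leq K}$, leaving the passage to all of $R_{L \leq K}$ implicit (it follows because $\underline{\la \phi^{-1}(L) \leq G \ra}$ is closed under restriction), and it obtains $[G \colon \phi^{-1}(K_0)] = [K \colon K_0] > 1$ by applying \zcref{lemm:res-index} to both steps of $L \leq K_0 \leq K$ and using multiplicativity of indices. You instead handle every element of $R_{L \leq K}$ directly via $\psi \circ \phi$, and you get the key step from surjectivity of the coset map, which gives $\phi(G)L = K$.
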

Note that the condition on the index is essential, take $K=C_6$, $L=1$, and $\phi \colon C_2 \to C_6$ the subgroup inclusion to see this.
\begin{proof}[Proof of \zcref{lemm:contain-antigenerated}]
    Let $(K_0 \lneq K) \in U_{L \leq K}$. By \zcref{lemm:res-index} we have $[G \colon \phi^{-1}(K_0)] \leq [K \colon K_0]$ and $[\phi^{-1}(K_0)\colon \phi^{-1}(L)] \leq [K_0 \colon L]$, but \[[G \colon \phi^{-1}(K_0)] [\phi^{-1}(K_0)\colon \phi^{-1}(L)] = [G \colon \phi^{-1}(L)] =[K \colon L] = [K \colon K_0] [K_0 \colon L]\] so $[G \colon \phi^{-1}(K_0)] = [K \colon K_0] > 1$. Thus by construction $(K_0 \lneq K) \in R_{\phi^{-1}(L) \leq G}$, and we obtain that $U_{L \leq K} \subset R_{\phi^{-1}(L) \leq G}$ so $\underline{\la \phi^{-1}(L) \leq G \ra} \subset \underline{\la L \leq K \ra}$.
\end{proof}

\begin{lemm}
    \label{lemm:mutual-restriction}
    Let $(H \leq G)$ and $(H' \leq G')$ be two transfers, such that each is a restriction of the other. Then they have the same index and antigenerate the same global transfer system.
\end{lemm}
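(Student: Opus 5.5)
The plan is to first use \zcref{lemm:res-index} twice to get equality of indices, and then \zcref{lemm:contain-antigenerated} twice to get equality of the antigenerated systems.

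Suppose $(H \leq G)$ is a restriction of $(H' \leq G')$ via $\phi \colon G \to G'$ with $\phi^{-1}(H') = H$. Suppose also that $(H' \leq G')$ is a restriction of $(H \leq G)$ via $\psi \colon G' \to G$ with $\psi^{-1}(H) = H'$. Applying \zcref{lemm:res-index} to $\phi$ gives $[G' \colon H'] \geq [G \colon H]$, and applying it to $\psi$ gives the reverse inequality. Hence $[G \colon H] = [G' \colon H']$.

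With the indices equal, apply \zcref{lemm:contain-antigenerated} to $\phi \colon G \to G'$ and $L = H'$. The hypothesis $[G' \colon H'] = [G \colon \phi^{-1}(H')]$ is exactly the equality just established, so the lemma gives
\[\underline{\la H \leq G \ra} = \underline{\la \phi^{-1}(H') \leq G \ra} \subset \underline{\la H' \leq G' \ra}.\]
The symmetric argument with $\psi \colon G' \to G$ and $L = H$ gives the reverse inclusion, and so the two antigenerated global transfer systems coincide.

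There is essentially no obstacle. The only point that needs care is that \zcref{lemm:contain-antigenerated} requires equal indices; this is exactly why the index equality must be derived first from the mutual restrictions rather than assumed. If one insists on working only with proper transfers, the degenerate case $H = G$ is handled by the same index argument, which forces $H' = G'$ as well.
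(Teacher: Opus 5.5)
Your proposal is correct and follows essentially the same route as the paper: you apply \zcref{lemm:res-index} in both directions to get equal indices, then \zcref{lemm:contain-antigenerated} in both directions to get mutual containment of the antigenerated systems. Your remark that the index equality has to be established before \zcref{lemm:contain-antigenerated} can be invoked is exactly the point the paper's short proof relies on.
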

\begin{proof}
    By \zcref{lemm:res-index} these two transfers have the same index. Thus by \zcref{lemm:contain-antigenerated} the global transfer systems that they generate are mutually contained in each other.
\end{proof}

\begin{defi}
    Let $H \lneq G$. We say that the transfer $(H \leq G)$ is a \emph{minimal antigenerator} if for every $H'\leq G'$ with $\underline{\la H' \leq G' \ra} = \underline{\la H \leq G \ra}$ we have that $[G' \colon H'] \geq [G \colon H]$ and $\lvert G' \rvert \geq \lvert G \rvert$.

    In other words, among all transfers that antigenerate the same global transfer system, the transfer $(H \leq G)$ has minimal index and its top group has minimal order. 
\end{defi}

\begin{prop}
\label{prop:endopure-minimalanti}
    Let $(H \lneq G)$ be an endopure transfer. Then $(H \leqslant G)$ is a minimal antigenerator.
\end{prop}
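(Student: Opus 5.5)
The plan is to compare $(H \leq G)$ and $(H' \leq G')$ through homomorphisms in both directions, and then to use endopurity of $H \leq G$ on an endomorphism of $G$ built from them. This mirrors \zcref{prop:endopure-minimal} and \zcref{prop:join-unique}.

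Suppose $T \coloneq \underline{\la H \leq G \ra} = \underline{\la H' \leq G' \ra}$. Since $U_{H\leq G}\subset R_{H\leq G}$, we have $(H \leq G) \notin T$. The characterisation in \zcref{constr-antigen} of membership in $\underline{\la H' \leq G' \ra}$ then gives a homomorphism $\varphi \colon G' \to G$ with $H' \leq \varphi^{-1}(H) \lneq G'$. Symmetrically, $(H' \leq G') \notin T = \underline{\la H \leq G \ra}$ gives a homomorphism $\psi \colon G \to G'$ with $H \leq \psi^{-1}(H') \lneq G$. Set $\theta = \varphi\circ\psi \in \mathrm{End}(G)$. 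Then
\[H \leq \psi^{-1}(H') \leq \psi^{-1}(\varphi^{-1}(H)) = \theta^{-1}(H).\]
If $\theta^{-1}(H) \lneq G$, endopurity forces $\theta$ to be injective, so $\theta$ is an automorphism. Then $\theta^{-1}(H)$ has the same order as $H$ and contains it, so $\theta^{-1}(H) = H$. This squeezes $\psi^{-1}(H') = H$. It also makes $\psi$ injective, so $\lvert G'\rvert \geq \lvert G\rvert$. Finally, $(H \leq G)$ is a restriction of $(H' \leq G')$ along $\psi$, so \zcref{lemm:res-index} gives $[G'\colon H'] \geq [G \colon H]$, as required.

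The main obstacle is the degenerate case $\theta^{-1}(H) = G$, i.e.\ $\psi(G) \leq K' \coloneq \varphi^{-1}(H) \lneq G'$. In that case $\psi$ corestricts to a map $G \to K'$ with $H \leq \psi^{-1}(H') \lneq G$, so $(H' \leq K') \notin \underline{\la H \leq G\ra} = T$. I would try two ways to handle this.

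\textbf{First approach.} Exploit that $T = \underline{\la H' \leq G' \ra}$ is built from the transfers in $U_{H' \leq G'}$, and use \zcref{lemm:proper-umbrella} together with \zcref{lemm:antigenerator-missing} applied to $(H \leq G)$. Running \zcref{prop-smallerumbrella} on $(H' \leq K') \notin T$ produces $K'_0$ with $H' \lneq K'_0 \leq K'$ and $T \subset \underline{\la H' \leq K'_0\ra}$. The aim is then to show that $T = \underline{\la H' \leq K'_0\ra}$, by comparing with $\underline{\la H\leq G\ra}$ via \zcref{lemm:antigenerator-missing}. This would replace $(H' \leq G')$ by an antigenerator of $T$ with strictly smaller top group.

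\textbf{Second approach (fallback).} Make the choice of $\psi$ extremal from the start, for instance choosing $\psi$ so that $\psi^{-1}(H')$ is maximal among proper subgroups of $G$ containing $H$. Then try to show that $\psi(G)\leq K'$ contradicts this maximality, by replacing $\psi$ with a composite through $K'$.

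To finish with the first approach, I would argue by induction on $\lvert G'\rvert$ over all antigenerators of $T$. Either the good case applies directly, or we pass to an antigenerator of $T$ with smaller top group and inductively obtain the bounds for it. The remaining point is to transfer those bounds back to $(H' \leq G')$, which should follow from \zcref{lemm:res-index} applied along the inclusion $K'_0 \leq G'$. I expect this reduction in the degenerate case to be the hardest part; the good case above is routine.
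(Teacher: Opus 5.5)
Your good case is correct and uses the same endopurity argument as the paper. However, the proposal has a genuine gap: the degenerate case $\psi(G) \leq K' = \varphi^{-1}(H)$ is left open, and neither suggested route closes it.

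The case really occurs. Take $(H \leq G) = (1 \leq C_2)$, so $T$ consists of the $(L \leq K)$ with $L$ containing every involution of $K$. Take $(H' \leq G') = (1 \leq A_5 \times C_2)$, which antigenerates the same $T$ because $A_5 \times C_2$ is generated by involutions. Let $\varphi$ be the projection onto $C_2$, and let $\psi$ send the generator of $C_2$ to $(x,1)$ with $x$ a double transposition. Then $K' = A_5 \times 1$ and $\theta$ is trivial.

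In your first approach, $U_{1 \leq A_5} \cap T = \varnothing$, so $K'_0 = A_5 \times 1$ is a valid output of \zcref{prop-smallerumbrella}; it is exactly what its proof produces. Yet $(1 \leq C_2) \in \underline{\la 1 \leq A_5 \ra}$, since every homomorphism $A_5 \to C_2$ is trivial, so $\underline{\la H' \leq K'_0 \ra}$ strictly contains $T$. The comparison via \zcref{lemm:antigenerator-missing} fails because it needs a map $K'_0 \to G$ witnessing $(H \leq G) \notin \underline{\la H' \leq K'_0 \ra}$. Restricting $\varphi$ to $K'_0 \leq \varphi^{-1}(H)$ never gives one, since the preimage of $H$ is all of $K'_0$.

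In your fallback, every admissible $\psi$ has $\psi^{-1}(H') = 1$. So your maximality condition is vacuous and does not exclude the degenerate $\psi$.

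The missing idea is to enlarge the bottom group instead of shrinking the top group. Since $H' \leq K'$, you have $U_{K' \leq G'} \subset U_{H' \leq G'}$, hence $T = \underline{\la H' \leq G' \ra} \subset \underline{\la K' \leq G' \ra}$. Also $(H \leq G) \notin \underline{\la K' \leq G' \ra}$ by restriction along $\varphi$, because $\varphi^{-1}(H) = K'$. So \zcref{lemm:antigenerator-missing} gives $\underline{\la K' \leq G' \ra} = T$.

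Now choose $\psi$ witnessing $(K' \leq G') \notin T$, i.e.\ $H \leq \psi^{-1}(K') \lneq G$. Then $\theta^{-1}(H) = \psi^{-1}(\varphi^{-1}(H)) = \psi^{-1}(K') \lneq G$, so you are automatically in your good case. That gives $\lvert G' \rvert \geq \lvert G \rvert$ and $[G' \colon K'] \geq [G \colon H]$, and $[G' \colon H'] \geq [G' \colon K']$ finishes.

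This is exactly the paper's step of assuming, without loss of generality, that $H'$ is maximal among subgroups of $G'$ antigenerating $T$. Maximality forces $H' = \varphi^{-1}(H)$ and removes the degenerate case from the start.
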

\begin{proof}
    Consider a transfer $(H' \leq G')$ with $\underline{\la H' \leq G' \ra} = \underline{\la H \leq G \ra}$. Assume without loss of generality that $H'$ is maximal among subgroups of $G'$ that satisfy that $\underline{\la H' \leq G' \ra} = \underline{\la H \leq G \ra}$ (otherwise we can replace $H'$). 
    
    Since $\underline{\la H' \leq G' \ra} = \underline{\la H \leq G \ra}$ there exists a homomorphism $\phi_1 \colon G \to G'$ with $H \leq \phi_1^{-1}(H') \lneq G$ and another homomorphism $\phi_2 \colon G' \to G$ with $H' \leq \phi_2^{-1}(H) \lneq G'$. We have $\underline{\la H \leq G \ra} = \underline{\la H' \leq G' \ra} \subset \underline{\la \phi_2^{-1}(H) \leq G' \ra}$ by construction, and $(H \leq G) \notin \underline{\la \phi_2^{-1}(H) \leq G' \ra}$ by restriction along $\phi_2$, so by \zcref{lemm:antigenerator-missing} we obtain that $\underline{\la H \leq G \ra} = \underline{\la \phi_2^{-1}(H) \leq G' \ra}$. By maximality of $H'$ we obtain that $H' = \phi_2^{-1}(H)$, and thus $H \leq \phi_1^{-1}(\phi_2^{-1}(H)) \lneq G$. Since $(H \leq G)$ is endopure, we deduce that $\phi_2 \circ \phi_1$ must be injective, and thus an isomorphism. Therefore $H= \phi_1^{-1}(\phi_2^{-1}(H))= \phi_1^{-1}(H')$ and by \zcref{lemm:res-index} we have $[G' \colon H'] \geq [G \colon H]$. Additionally $\phi_1$ must be injective and thus $\lvert G' \rvert \geq \lvert G \rvert$.

\end{proof}

\begin{prop}
\label{prop:minimal-plus-meet-irre-endopure}
    Let $(H \lneq G)$ be such that $\underline{\la H \leq G \ra}$ is meet-irreducible. Assume that for every $H' \leq G'$ with $\underline{\la H' \leq G' \ra} = \underline{\la H \leq G \ra}$ we have $[G' \colon H'] > [G \colon H]$, or $[G' \colon H'] = [G \colon H]$ and $\lvert G' \rvert \geq \lvert G \rvert$. Then $H \leq G$ is endopure.
\end{prop}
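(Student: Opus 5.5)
The plan is to dualise the proof of \zcref{prop:minimal-plus-irre-endopure}. Put $T = \underline{\la H \leq G \ra}$ and suppose, for a contradiction, that $H \leq G$ is not endopure. Then there is a non-injective endomorphism $\phi \colon G \to G$ with $H \leq K \coloneq \phi^{-1}(H) \lneq G$. There are two cases, $K \neq H$ and $K = H$. The first should contradict the minimality of the index, and the second the minimality of $\lvert G \rvert$.

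\emph{Case $K = H$.} Here we mimic the $\Img \phi$ argument from the join case, with \zcref{lemm:contain-antigenerated} in place of mutual restriction. Set $G_1 = \Img \phi$ and $H_1 = H \cap G_1$.
\begin{itemize}
\item The corestriction $\phi \colon G \to G_1$ satisfies $\phi^{-1}(H_1) = \phi^{-1}(H) = H$, so $(H \leq G)$ is a restriction of $(H_1 \leq G_1)$.
\item The inclusion $G_1 \hookrightarrow G$ shows that $(H_1 \leq G_1)$ is a restriction of $(H \leq G)$.
\end{itemize}
By \zcref{lemm:mutual-restriction} the two transfers have equal index and antigenerate the same global transfer system. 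The hypothesis then forces $\lvert G_1 \rvert \geq \lvert G \rvert$. This contradicts the non-injectivity of $\phi$.

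\emph{Case $H \lneq K$.} Since $U_{K \leq G} \subset U_{H \leq G}$, we get $R_{K \leq G} \subset R_{H \leq G}$, and hence $T \subset \underline{\la K \leq G \ra}$.

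Next, $(H \leq K) \notin T$. Indeed, restricting $(H \leq G)$ along the inclusion $K \hookrightarrow G$ gives preimage $H \lneq K$, so $(H \leq K) \in R_{H \leq G}$. Applying \zcref{prop-smallerumbrella} to $T$ and $(H \leq K)$ yields $G_0$ with $H \lneq G_0 \leq K$ and $T \subset \underline{\la H \leq G_0 \ra}$. So
\[
T \subset \underline{\la K \leq G \ra} \cap \underline{\la H \leq G_0 \ra},
\]
and both factors have index strictly smaller than $[G \colon H]$. If this inclusion is an equality, then meet-irreducibility of $T$ forces $T$ to equal one of the two factors. That contradicts the index hypothesis in the statement.

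To prove the equality, I need $R_{H \leq G} \subset R_{K \leq G} \cup R_{H \leq G_0}$. Take $(L \leq M)$ witnessed by $\psi \colon G \to M$ with $H \leq H' \coloneq \psi^{-1}(L) \lneq G$.
\begin{itemize}
\item If $K \leq H'$, the witness already places $(L \leq M)$ in $R_{K \leq G}$.
\item Otherwise, first try $\psi \circ \phi$. Its preimage of $L$ is $\phi^{-1}(H')$, which contains $\phi^{-1}(H) = K$. If it is proper in $G$, then $(L \leq M) \in R_{K \leq G}$.
\item If instead $\phi(G) \leq H'$, restrict $\psi$ to $G_0$. The preimage $H' \cap G_0$ contains $H$, and I expect to show it is proper in $G_0$, which gives $(L \leq M) \in R_{H \leq G_0}$.
\end{itemize}

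\textbf{Main obstacle.} This last sub-case is the step I expect to be hardest; nothing above yet guarantees $G_0 \not\leq H'$. If the direct argument fails, I will use the freedom in choosing $\phi$. Replace $\phi$ by a power, or take $K$ maximal among such preimages, so that $K$ is $\phi$-stable. Alternatively, rerun the argument with $G_0$ replaced by the subgroup produced by \zcref{prop-smallerumbrella} applied to a transfer of the form $(H' \cap K \leq K)$. In either version, the aim is to make $H' \cap G_0 \lneq G_0$ automatic.
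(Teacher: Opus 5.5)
Your case $K=H$ is correct and is the paper's closing step, and the preliminary facts in the case $H \lneq K$ are fine. \textbf{The gap is the claimed equality} $T=\underline{\la K \leq G \ra}\cap\underline{\la H \leq G_0 \ra}$, in exactly the sub-case you flag ($\Img\phi \leq H'$, $K \not\leq H'$). This equality is not just unproven; it is false in general, because $G_0$ is produced from $T$ alone and knows nothing about $\Img\phi$.

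For a counterexample, take $G=Q_{16}$, $H=1$, and $\phi$ the composite $Q_{16}\to Q_{16}/C_8\cong Z(Q_{16})\hookrightarrow Q_{16}$, so $K=C_8$.
\begin{itemize}
\item Every homomorphism $Q_{16}\to C_8$ factors through $Q_{16}^{\mathrm{ab}}\cong C_2^2$, so $(C_2\leq C_8)$ and $(C_4\leq C_8)$ lie in $T$. Since $T\subset\underline{\la 1\leq G_0\ra}$ requires $T\cap U_{1\leq G_0}=\varnothing$, this forces $G_0=Z(Q_{16})$.
\item Every non-injective endomorphism of $Q_{16}$ has image in $Z(Q_{16})$, and no automorphism sends $C_8$ into $Z(Q_{16})$. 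Hence $(Z(Q_{16})\leq Q_{16})\in\underline{\la C_8\leq Q_{16}\ra}$.
\item Every homomorphism $C_2\to Q_{16}$ lands in $Z(Q_{16})$, so $(Z(Q_{16})\leq Q_{16})\in\underline{\la 1\leq Z(Q_{16})\ra}$.
\item Restricting along the identity shows $(Z(Q_{16})\leq Q_{16})\notin T$.
\end{itemize}
The same happens for the two kernels $K\cong Q_8$. Your fallbacks do not rescue this. All possible $K$ here are already maximal with $\phi(K)=1$, and $\phi^2=1$, so choosing $K$ maximal or passing to powers changes nothing. Applying \zcref{prop-smallerumbrella} to $(H'\cap K\leq K)$ cannot even start, since $(Z(Q_{16})\leq K)\in T$. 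Moreover, that construction would make $G_0$ depend on $H'$, so it would not yield a single two-factor decomposition.

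\textbf{The missing idea is to build the second factor from $\phi$ itself:} use $\underline{\la \la H\cup\Img\phi\ra\leq G\ra}$ in place of $\underline{\la H\leq G_0\ra}$.
\begin{itemize}
\item In your troublesome sub-case, $\la H\cup\Img\phi\ra\leq H'=\psi^{-1}(L)\lneq G$. So $\psi$ itself witnesses $(L\leq M)\in R_{\la H\cup\Img\phi\ra\leq G}$. If $\la H\cup\Img\phi\ra=G$, the sub-case is empty.
\item Your second bullet already handles $\Img\phi\not\leq H'$.
\item Since $\phi^{-1}(H)\neq G$, we have $\Img\phi\not\leq H$, so this factor has index strictly less than $[G\colon H]$.
\end{itemize}
Meet-irreducibility and the index hypothesis then give $T=\underline{\la \phi^{-1}(H)\leq G\ra}$ with $\phi^{-1}(H)=H$. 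This reduces everything to your (correct) case $K=H$. That is the paper's argument; it needs no prior case split and no appeal to \zcref{prop-smallerumbrella}.
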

\begin{proof}
    Assume for a contradiction that $H \leq G$ is not endopure. This means that there exists a non-injective endomorphism $\phi \colon G \to G$ with $H \leq \phi^{-1}(H) \lneq G$. Then \[\underline{\la H \leq G \ra} = \underline{\la \phi^{-1}(H) \leq G\ra} \cap \underline{\la \la H \cup \Img \phi \ra \leq G \ra}\] where one containment is by construction. For any $(H' \leq G) \in U_{H \leq G}$ if $\Img \phi \leq H'$ then $(H' \leq G) \notin \underline{\la \la H \cup \Img \phi \ra \leq G \ra}$. Otherwise $\phi^{-1}(H) \leq \phi^{-1}(H') \lneq G$ and $(H' \leq G) \notin \underline{\la \phi^{-1}(H) \leq G\ra}$, which shows the other containment.
    
    Since $\phi^{-1}(H) \lneq G$, $\Img \phi$ is not contained in $H$, and the transfer $(\la H \cup \Img \phi \ra \leq G)$ has a strictly smaller index than $(H \leq G)$. Thus by assumption we cannot have $\underline{\la H \leq G \ra} = \underline{\la \la H \cup \Img \phi \ra \leq G \ra}$. Since $\underline{\la H \leq G \ra}$ is meet-irreducible we must have $\underline{\la H \leq G \ra} = \underline{\la \phi^{-1}(H) \leq G \ra}$, and once again by assumption $\phi^{-1}(H)=H$.

    Consider now the transfer $(H \cap \Img \phi \leq \Img \phi)$. This transfer restricts to $(H \leq G)$ through $\phi$ because $\phi^{-1}(H)=H$, and we can restrict $(H \leq G)$ through the subgroup inclusion of $\Img \phi$ into $G$ to obtain $(H \cap \Img \phi \leq \Img \phi)$. Thus by \zcref{lemm:mutual-restriction} these transfers have the same index and $\underline{\la H \cap \Img \phi \leq \Img \phi \ra} = \underline{\la H \leq G \ra}$. But $\lvert \Img \phi \rvert < \lvert G \rvert$ because $\phi$ is not injective, thus this contradicts the fact that $H \lneq G$ is a minimal generator.
\end{proof}

\begin{prop}
\label{prop:meet-minimal}
    Let $T$ be a meet-irreducible global transfer system. Then there exists an endopure transfer $(H \leqslant G)$ such that $T = \underline{\la H \leq G \ra}$.
\end{prop}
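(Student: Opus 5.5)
The proof mirrors that of \zcref{prop:join-minimal}, now using the meet-side results. By \zcref{prop:meet-irre-form}, the meet-irreducible $T$ is of the form $\underline{\la H \leq G \ra}$ for some transfer $(H \lneq G)$. The transfer is proper: $U_{H \leq G}$ is only defined for proper pairs, and $T$ is not all of $\mathbb{I}(\fami)$, since the top element is the empty meet and so is not meet-irreducible.

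Next we consider the nonempty set $A_T$ of proper transfers $(H' \lneq G')$ with $\underline{\la H' \leq G' \ra} = T$. Indices are positive integers, so we can pick an element of $A_T$ of minimal index $[G' \colon H']$. Among the elements of $A_T$ with this minimal index, the orders $\lvert G' \rvert$ are also positive integers, so we can pick one, $(H_0 \lneq G_0)$, with minimal top group order. This well-ordering argument works even though $\fami$ may be infinite, because we are only minimising natural-number invariants.

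Finally we check the hypotheses of \zcref{prop:minimal-plus-meet-irre-endopure} for $(H_0 \leq G_0)$. Its antigenerated system is $T$, which is meet-irreducible. Now take any $(H' \leq G')$ antigenerating $T$. Either $[G' \colon H'] > [G_0 \colon H_0]$, or the two indices are equal, in which case $\lvert G' \rvert \geq \lvert G_0 \rvert$ by the second minimisation. Hence \zcref{prop:minimal-plus-meet-irre-endopure} shows that $(H_0 \leq G_0)$ is endopure, and $T = \underline{\la H_0 \leq G_0 \ra}$ as required.

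The only delicate point is that we use the lexicographic (index, then order) minimality rather than full minimal-antigenerator status, which might not be attainable simultaneously. This is exactly why \zcref{prop:minimal-plus-meet-irre-endopure} was stated with the weaker hypothesis, so no further obstacle arises.
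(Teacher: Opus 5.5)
Your proposal is correct and is essentially the paper's proof: the paper reuses the argument of \zcref{prop:join-minimal}, taking an antigenerator from \zcref{prop:meet-irre-form}, minimising the index and then the order of the top group, and applying \zcref{prop:minimal-plus-meet-irre-endopure}. Your additional remarks are sound clarifications that the paper leaves implicit: $T$ cannot be the top element, so its antigenerator is a proper transfer, and the minimisation only involves natural-number invariants.
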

\begin{proof}
    This proof is identical to that of \zcref{prop:join-minimal}, now using \zcref{prop:meet-irre-form} and \zcref{prop:minimal-plus-meet-irre-endopure} instead.\qedhere
\end{proof}

\begin{prop}
\label{prop:meet-unique}
    Let $(H \lneq G)$ be endopure, and assume that $\underline{\la H \leq G \ra} = \underline{\la H' \leq G' \ra}$ with $[G' \colon H'] = [G \colon H]$ and $\lvert G \rvert = \lvert G' \rvert$. Then there is an isomorphism $\psi \colon G' \to G$ with $\psi(H') = H$, so the two transfers are isomorphic.
\end{prop}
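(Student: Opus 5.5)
The plan is to mirror the proof of \zcref{prop:join-unique}, replacing \zcref{lemm:endopure-missing} by the antigeneration lemmas \zcref{lemm:antigenerator-missing} and \zcref{lemm:contain-antigenerated}.

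First we show that $(H' \leq G')$ is endopure. By \zcref{prop:endopure-minimalanti}, $(H \leq G)$ is a minimal antigenerator. Since $(H' \leq G')$ has the same index and the same top order, it satisfies the hypothesis of \zcref{prop:minimal-plus-meet-irre-endopure}. Moreover $\underline{\la H' \leq G' \ra} = \underline{\la H \leq G \ra}$ is meet-irreducible by \zcref{prop:endopure-meet}. Hence $(H' \leq G')$ is endopure.

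Next we produce homomorphisms in both directions with exact preimages. Since $(H \leq G) \notin \underline{\la H \leq G \ra} = \underline{\la H' \leq G' \ra}$, \zcref{constr-antigen} gives $\phi_1 \colon G \to G'$ with $H \leq \phi_1^{-1}(H') \lneq G$. Symmetrically, $(H' \leq G') \notin \underline{\la H \leq G \ra}$ gives $\psi \colon G' \to G$ with $H' \leq \psi^{-1}(H) \lneq G'$.

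We claim $\psi^{-1}(H) = H'$, and for this we use endopurity of $(H' \leq G')$. Set $K = \psi^{-1}(H)$. By construction $\underline{\la H' \leq G' \ra} \subset \underline{\la K \leq G' \ra}$. Also $(H \leq G) \notin \underline{\la K \leq G' \ra}$, by restriction along $\psi$. Applying \zcref{lemm:antigenerator-missing} to the endopure $(H \leq G)$ with $T = \underline{\la K \leq G' \ra}$ yields $\underline{\la K \leq G' \ra} = \underline{\la H \leq G \ra} = \underline{\la H' \leq G' \ra}$. By the minimal-antigenerator property, $[G' \colon K] \geq [G \colon H] = [G' \colon H']$, and since $H' \leq K$ this forces $K = H'$. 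The same argument with the roles swapped (using endopurity of $H' \leq G'$, now established) gives $\phi_1^{-1}(H') = H$.

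Finally, $H \leq (\psi \circ \phi_1)^{-1}(H) = \phi_1^{-1}(H') = H \lneq G$. Endopurity of $(H \leq G)$ then forces $\psi \circ \phi_1$ to be injective. Therefore $\psi$ restricted to $\Img \phi_1$ is injective. Likewise $\phi_1 \circ \psi$ is injective, so $\psi$ itself is injective. Since $\lvert G \rvert = \lvert G' \rvert$, $\psi$ is an isomorphism. Then $\psi^{-1}(H) = H'$ gives $\psi(H') = H$.

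The main obstacle is the claim that the preimages are exact, $\psi^{-1}(H) = H'$. Unlike the join case, where \zcref{lemm:endopure-missing} handles this directly, here we must pass through \zcref{lemm:antigenerator-missing} together with the index-minimality argument above.
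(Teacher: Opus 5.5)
Your proposal is correct and follows essentially the same route as the paper. Both arguments first show $(H' \leq G')$ is endopure via minimal antigeneration, then obtain $\psi^{-1}(H) = H'$ and $\phi_1^{-1}(H') = H$ from the antigenerator-missing lemma plus index minimality, and finally use endopurity to force a composite to be injective. The differences are cosmetic: the paper uses only $\phi_1 \circ \psi$ with endopurity of $(H' \leq G')$, so your check on $\psi \circ \phi_1$ is redundant, and your proof of $\psi^{-1}(H) = H'$ actually uses endopurity of $(H \leq G)$, not of $(H' \leq G')$ as you announce.
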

\begin{proof}
    Since $(H \leq G)$ is endopure, it is a minimal antigenerator by \zcref{prop:endopure-minimalanti}. Thus $H' \leq G'$ is a minimal antigenerator of a meet-irreducible global transfer system, so it is also endopure by \zcref{prop:minimal-plus-meet-irre-endopure}.

    Since $\underline{\la H' \leq G' \ra} = \underline{\la H \leq G \ra}$ there exist homomorphisms $\phi \colon G \to G'$ with $H \leq \phi^{-1}(H') \lneq G$ and $\psi \colon G' \to G$ with $H' \leq \psi^{-1}(H) \lneq G'$. We have $\underline{\la H \leq G \ra} = \underline{\la H' \leq G' \ra} \subset \underline{\la \psi^{-1}(H) \lneq G' \ra}$ and $(H \leq G) \notin \underline{\la \psi^{-1}(H) \lneq G' \ra}$ by restriction along $\psi$, so by \zcref{lemm:antigenerator-missing} we obtain that $\underline{\la H \leq G \ra} = \underline{\la \psi^{-1}(H) \lneq G' \ra}$. Since $(H \leq G)$ is a minimal antigenerator and $[G' \colon H'] = [G \colon H]$ we must have $\psi^{-1}(H)=H'$. By the same argument $\phi^{-1}(H')=H$. Then $H'=\psi^{-1}(\phi^{-1}(H')) \lneq G'$, and the endopurity of $H' \leq G'$ shows that the composition $\phi \circ \psi$ must be injective. Thus $\psi$ is injective, and an isomorphism, with $\psi(H') = H$.
\end{proof}

Now putting all of the above together yields that any meet-irreducible global transfer system is also generated by a single endopure minimal generator.

\begin{prop}
\label{prop:meet-bijection}
    The assignment that sends an endopure transfer $(H \leq G)$ to $\underline{\la H \leq G \ra}$ is a bijection between the set of isomorphism classes of endopure transfers and the set of meet-irreducible global transfer systems.
\end{prop}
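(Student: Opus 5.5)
The plan mirrors the proof of \zcref{prop:join-bijection}, assembling the three meet-side results just established. First, the assignment is well defined on isomorphism classes. If $(H \leq G)$ and $(H' \leq G')$ are isomorphic via $\alpha \colon G \to G'$ with $\alpha(H) = H'$, then each is a restriction of the other, along $\alpha$ and $\alpha^{-1}$ respectively. So \zcref{lemm:mutual-restriction} shows that they antigenerate the same global transfer system. By \zcref{prop:endopure-meet}, the image of any endopure transfer is meet-irreducible, so the assignment lands in the right set.

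For surjectivity we invoke \zcref{prop:meet-minimal} directly: every meet-irreducible global transfer system $T$ is of the form $\underline{\la H \leq G \ra}$ for some endopure transfer $(H \leq G)$.

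For injectivity, suppose $(H \leq G)$ and $(H' \leq G')$ are both endopure with $\underline{\la H \leq G \ra} = \underline{\la H' \leq G' \ra}$.
\begin{enumerate}
\item By \zcref{prop:endopure-minimalanti}, both transfers are minimal antigenerators of this common global transfer system.
\item Applying the definition of minimal antigenerator in each direction gives $[G' \colon H'] \geq [G \colon H]$ and $[G \colon H] \geq [G' \colon H']$, so the indices are equal.
\item The same argument applied to the orders of the top groups gives $\lvert G \rvert = \lvert G' \rvert$.
\item The hypotheses of \zcref{prop:meet-unique} are now satisfied, and it provides an isomorphism $\psi \colon G' \to G$ with $\psi(H') = H$. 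Hence the two transfers represent the same isomorphism class.
\end{enumerate}

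There is no real obstacle here, since the substantive work was done in the preceding propositions. The only point needing care is the well-definedness step. As remarked before \zcref{lemm:contain-antigenerated}, restriction does not in general preserve antigeneration, so we must pass through \zcref{lemm:mutual-restriction}, which uses the equality of indices, rather than argue naively.
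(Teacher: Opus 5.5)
Your proposal is correct and follows the paper's argument: meet-irreducibility comes from \zcref{prop:endopure-meet}, surjectivity from \zcref{prop:meet-minimal}, and injectivity from \zcref{prop:endopure-minimalanti} combined with \zcref{prop:meet-unique}. Two points the paper leaves implicit, and which you spell out, are the well-definedness on isomorphism classes via \zcref{lemm:mutual-restriction} and the two-sided minimality argument that forces equal indices and orders.
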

\begin{proof}
    If $H \leq G$ is endopure then $\underline{\la H \leq G \ra}$ is meet-irreducible by \zcref{prop:endopure-meet}. This assignment is surjective by \zcref{prop:meet-minimal}. It is injective by \zcref{prop:endopure-minimalanti} and \zcref{prop:meet-unique}.    
\end{proof}

All that is left to do is to provide a condition for when join-irreducible elements are contained in meet-irreducible elements, at which point we have the entire description of the formal context for global transfer systems.

\begin{prop}
\label{prop:containment-condition}
    Let $(H \leq G)$ and $(L \leq K)$ be two elements of $\mathsf{EP}(\mathcal{F})$. We have that $\overline{\la L \leq K \ra} \subset \underline{\la H \leq G \ra}$ if and only if there are no homomorphisms $\phi \colon G \to K$ with $H \leq \phi^{-1}(L) \lneq G$.

    Equivalently, $\overline{\la L \leq K \ra} \subset \underline{\la H \leq G \ra}$ if and only if for any homomorphism $\phi \colon G \to K$, if $\phi(H) \leq L$, then $\phi(G) \leq L$.
\end{prop}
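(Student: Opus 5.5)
The plan is to reduce the containment to a single membership. Since $\underline{\la H \leq G \ra}$ is a global transfer system (\zcref{prop:anti-is-gts}) and $\overline{\la L \leq K \ra}$ is the smallest global transfer system containing $(L \leq K)$, we have $\overline{\la L \leq K \ra} \subset \underline{\la H \leq G \ra}$ if and only if $(L \leq K) \in \underline{\la H \leq G \ra}$. For the forward direction of this equivalence, note that $(L \leq K)$ lies in its own generated transfer system. For the backward direction, use minimality of the generated system.

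By the equivalent description in \zcref{constr-antigen}, $(L \leq K) \notin \underline{\la H \leq G \ra}$ holds exactly when there is a homomorphism $\phi \colon G \to K$ with $H \leq \phi^{-1}(L) \lneq G$. Negating this gives the first form of the statement directly. Note that endopurity is not actually needed for this equivalence; it is only relevant because these are the indexing objects of the context.

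For the reformulation, observe that $H \leq \phi^{-1}(L)$ is equivalent to $\phi(H) \leq L$, and $\phi^{-1}(L) = G$ is equivalent to $\phi(G) \leq L$. Hence "no $\phi$ with $H \leq \phi^{-1}(L) \lneq G$" says precisely that every $\phi$ with $\phi(H) \leq L$ has $\phi(G) \leq L$.

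No step is a real obstacle. The only point needing care is the first reduction, namely that containment of a generated system can be checked on its generator, and this is immediate from the definitions.
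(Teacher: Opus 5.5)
Your proposal is correct and follows essentially the same route as the paper. Both reduce containment of the generated system to the single membership $(L \leq K) \in \underline{\la H \leq G \ra}$, using that the antigenerated set is a global transfer system, and then read off the condition from the equivalent description in \zcref{constr-antigen}; you additionally spell out the routine translation to the reformulation via $\phi(H) \leq L$ and $\phi(G) \leq L$, which the paper leaves implicit.
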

\begin{proof}
    We have that $\overline{\la L \leq K \ra} \subset \underline{\la H \leq G \ra}$ if and only if $(L \leq K) \in \underline{\la H \leq G \ra}$ if and only if there are no homomorphisms $\phi \colon G \to K$ with $H \leq \phi^{-1}(L) \lneq G$ by construction.
\end{proof}

We summarise the results of this section in the following theorem:

\begin{thm}\label{thm:summary}
    Let $\mathcal{F}$ be a family. Then there is a canonical isomorphism \[\varepsilon_\fami \colon \mathsf{Tr}^\mathsf{Gl}(\fami) \to \conlat(\mathsf{EP}(\fami), \mathsf{EP}(\fami), {\leq_{\mathsf{EP}}})\] given by
    \[T \mapsto (\{(L\leq K) \in \mathsf{EP}(\fami) \mid (L \leq K) \in T\},\{(H\leq G) \in \mathsf{EP}(\fami) \mid  (H' \leq G) \notin T \; \text{for every} \; H \leq H' \lneq G \}).\]
        
    The relation $\leq_{\mathsf{EP}}$ in $\mathsf{EP}(\fami) \times \mathsf{EP}(\fami)$ is given by $(L \leq K) \leq_{\mathsf{EP}} (H \leq G)$ if there are no homomorphisms $\phi \colon G \to K$ with $H \leq \phi^{-1}(L) \lneq G$.
\end{thm}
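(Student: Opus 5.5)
The plan is to apply \zcref{thm:FTFCA2} to $L = \mathsf{Tr}^\mathsf{Gl}(\fami)$ and then transport the resulting formal context along the bijections of this section. The hypothesis of \zcref{thm:FTFCA2} splits into two halves. Every global transfer system is a join of join-irreducibles by \zcref{prop:join-condition}. It is also a meet of meet-irreducibles by \zcref{prop:meet-condition}. Since $\mathsf{Tr}^\mathsf{Gl}(\fami)$ is a complete lattice, we obtain a canonical isomorphism
\[
\mathsf{Tr}^\mathsf{Gl}(\fami) \to \conlat(J, M, \subset), \qquad T \mapsto (\{x \in J \mid x \subset T\},\{y \in M \mid T \subset y\}).
\]

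Next we replace $J$ and $M$ by $\mathsf{EP}(\fami)$. By \zcref{prop:join-bijection}, $(L \leq K) \mapsto \overline{\la L \leq K \ra}$ is a bijection $\mathsf{EP}(\fami) \to J$. By \zcref{prop:meet-bijection}, $(H \leq G) \mapsto \underline{\la H \leq G \ra}$ is a bijection $\mathsf{EP}(\fami) \to M$. Under these bijections the incidence relation $\overline{\la L \leq K \ra} \subset \underline{\la H \leq G \ra}$ becomes exactly $(L \leq K) \leq_{\mathsf{EP}} (H \leq G)$, by \zcref{prop:containment-condition}. A pair of bijections on objects and attributes preserving the incidence relation induces an isomorphism of concept lattices. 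Composing gives $\varepsilon_\fami$.

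It remains to check that the formula for $\varepsilon_\fami$ matches. On the object side, $\overline{\la L \leq K \ra} \subset T$ holds if and only if $(L \leq K) \in T$, since $T$ is a global transfer system and $\overline{\la L \leq K \ra}$ is the smallest one containing $(L \leq K)$.

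The attribute side is the main step. We must show that $T \subset \underline{\la H \leq G \ra}$ if and only if $(H' \leq G) \notin T$ for every $H \leq H' \lneq G$, that is, $U_{H \leq G} \cap T = \varnothing$.
\begin{itemize}
\item If $U_{H \leq G} \cap T = \varnothing$, then $R_{H \leq G} \cap T = \varnothing$. Indeed, a transfer in $R_{H \leq G}$ that lies in $T$ would restrict, along the relevant homomorphism $G \to K$, to an element of $U_{H \leq G}$ lying in $T$. This is the first paragraph of the proof of \zcref{prop-capantis}. Hence $T \subset \underline{\la H \leq G \ra}$.
\item Conversely, $U_{H \leq G} \subset R_{H \leq G}$ by \zcref{constr-antigen}, so $U_{H \leq G}$ is disjoint from $\underline{\la H \leq G \ra}$. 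If $T \subset \underline{\la H \leq G \ra}$, then $U_{H \leq G}$ is disjoint from $T$.
\end{itemize}

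Finally, all these sets are invariant under isomorphism of transfers, so the formulas are well defined on isomorphism classes. The only remaining care is that the identifications in $\mathsf{EP}(\fami)$ agree with those used in the two bijection propositions, which they do by construction.
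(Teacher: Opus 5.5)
Your proof is correct and follows the paper's argument. You apply \zcref{thm:FTFCA2} using \zcref{prop:join-condition} and \zcref{prop:meet-condition}, then transport along the bijections of \zcref{prop:join-bijection} and \zcref{prop:meet-bijection}, with the incidence relation identified via \zcref{prop:containment-condition}. Your extra check that $T \subset \underline{\la H \leq G \ra}$ if and only if $U_{H \leq G} \cap T = \varnothing$ confirms the explicit attribute-side formula for $\varepsilon_\fami$, a step the paper leaves implicit.
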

\begin{proof}
    We can apply \zcref{thm:FTFCA2} by \zcref{prop:join-condition} and \zcref{prop:meet-condition}. The constructions $\overline{\la - \ra}$ and $\underline{\la - \ra}$ induce bijections from $\mathsf{EP}(\fami)$ to $J(\mathsf{Tr}^\mathsf{Gl}(\fami))$ and $M(\mathsf{Tr}^\mathsf{Gl}(\fami))$ by \zcref{prop:join-bijection} and \zcref{prop:meet-bijection} respectively. The induced relation between irreducibles corresponds to $\leq_{\mathsf{EP}}$ by \zcref{prop:containment-condition}.
\end{proof}

\section{Examples}
\label{sec:examples}

We can now use our characterization of the lattice of global transfer systems in terms of endopure transfers to study some interesting families, obtaining novel computations and some surprising examples of behaviour.

\subsection{Change of family and filtrations}\label{subsec:filts}

Restricting global transfer systems to a subfamily $\mathcal{E} \subset \fami$ preserves containment, inducing a morphism of posets which was denoted by $U^{\fami}_{\mathcal{E}}$ in \cite[Lemma~4.2]{abeliantransfers}. In \cite[Proposition~4.4]{abeliantransfers} a right adjoint (as posets) of this restriction morphism was constructed, which was in fact an embedding of $\mathsf{Tr}^\mathsf{Gl}(\mathcal{E})$ in $\mathsf{Tr}^\mathsf{Gl}(\fami)$. Here we will consider a different embedding, given by the left adjoint.

\begin{lemm}
    The restriction homomorphisms $U^{\mathcal{F}}_{\mathcal{E}}$ have left adjoints $L^{\mathcal{F}}_{\mathcal{E}}$. These left adjoints are embeddings with $U^{\mathcal{F}}_{\mathcal{E}} \circ L^{\mathcal{F}}_{\mathcal{E}} = \id_\mathcal{E}$.
\end{lemm}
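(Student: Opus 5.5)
The natural candidate for $L^{\fami}_{\mathcal{E}}$ is "generate in the larger family". For $T \in \mathsf{Tr}^\mathsf{Gl}(\mathcal{E})$ I will set
\[L^{\fami}_{\mathcal{E}}(T) \coloneq \overline{\la T \ra}_{\fami},\]
the smallest global $\fami$-transfer system containing the transfers of $T$. Here $T \subset \mathbb{I}(\mathcal{E}) \subset \mathbb{I}(\fami)$, and the closure is the transitive closure of all restrictions, along homomorphisms between groups of $\fami$, of transfers in $T$. Monotonicity of $L^{\fami}_{\mathcal{E}}$ is immediate.

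The adjunction $L^{\fami}_{\mathcal{E}}(T) \subset S \iff T \subset U^{\fami}_{\mathcal{E}}(S)$, for $S \in \mathsf{Tr}^\mathsf{Gl}(\fami)$, follows formally. Recall that $U^{\fami}_{\mathcal{E}}(S) = S \cap \mathbb{I}(\mathcal{E})$, so $T \subset U^{\fami}_{\mathcal{E}}(S)$ just says $T \subset S$ as sets of transfers. Since $S$ is a global $\fami$-transfer system, this holds exactly when the $\fami$-closure of $T$ lies in $S$, by the universal property of generation.

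It remains to prove $U^{\fami}_{\mathcal{E}} \circ L^{\fami}_{\mathcal{E}} = \id$. Once this holds, $L^{\fami}_{\mathcal{E}}$ is injective and order-reflecting, so it is an embedding. The inclusion $T \subset \overline{\la T \ra}_\fami \cap \mathbb{I}(\mathcal{E})$ is clear, so the work lies in the converse. Take $(H \leq G) \in \overline{\la T\ra}_\fami$ with $G \in \mathcal{E}$. Then it is a composition $H = H_0 \leq H_1 \leq \dots \leq H_M = G$, where each $(H_{j-1} \leq H_j)$ is a restriction of some $(L_j \leq K_j) \in T$ along a homomorphism $\phi_j \colon H_j \to K_j$. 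Since $\mathcal{E}$ is closed under subgroups, every $H_j \leq G$ lies in $\mathcal{E}$, and $K_j \in \mathcal{E}$ because $T$ is an $\mathcal{E}$-transfer system. Each $\phi_j$ is therefore a homomorphism between groups of $\mathcal{E}$, so by restriction closure of $T$ we get $(H_{j-1} \leq H_j) \in T$. Composition closure of $T$ then gives $(H \leq G) \in T$.

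The only subtle point is this last step: a priori the $\fami$-closure could route through intermediate groups outside $\mathcal{E}$. Using the explicit description of the generated system as "compositions of restrictions" (the construction of $\overline{\la S \ra}$), which needs no iterated restrictions of compositions, removes this worry. Restriction-closure of the set of compositions holds because pulling back a chain along a homomorphism gives a chain of restrictions. The sources are all subgroups of $G$, and only the targets $K_j$ could leave $\mathcal{E}$, which they do not since they come from $T$. The key ingredient throughout is that $\mathcal{E}$ is closed under subgroups.
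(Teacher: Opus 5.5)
Your proposal is correct and follows essentially the same route as the paper. You define $L^{\mathcal{F}}_{\mathcal{E}}$ as generation in $\mathcal{F}$, and you prove $U^{\mathcal{F}}_{\mathcal{E}} \circ L^{\mathcal{F}}_{\mathcal{E}} = \id$ by writing a transfer of $\overline{\la T \ra}$ with top group in $\mathcal{E}$ as a composition of restrictions of transfers in $T$, all of whose groups lie in $\mathcal{E}$ by subgroup-closure. The differences are cosmetic. The paper checks the adjunction through the unit/counit inequalities ($U L = \id$ and $L U(S) \subset S$), while you use the equivalent formulation $L(T) \subset S \iff T \subset U(S)$, and you justify the ``compositions of restrictions'' description of the generated system more carefully than the paper does.
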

\begin{proof}
    Let $L^{\mathcal{F}}_{\mathcal{E}}(T) = \overline{\la T \ra}$ be the global transfer system generated by $T$ in $\mathcal{F}$. We can see that $U^{\mathcal{F}}_{\mathcal{E}} \circ L^{\mathcal{F}}_{\mathcal{E}} = \id_\mathcal{E}$, because if $(H \leq G) \in U^{\mathcal{F}}_{\mathcal{E}}(L^{\mathcal{F}}_{\mathcal{E}}(T))$ then it is a composition of restrictions of transfers in $T$, but since $\mathcal{E}$ is closed under subgroups, those restrictions were already in $T$. We also see that $(L^{\mathcal{F}}_{\mathcal{E}} \circ U^{\mathcal{F}}_{\mathcal{E}})(T) \subset T$ by construction of $L^{\mathcal{F}}_{\mathcal{E}}$, so $L^{\mathcal{F}}_{\mathcal{E}}$ is left adjoint to $U^{\mathcal{F}}_{\mathcal{E}}$.
\end{proof}

Note that since $U^{\fami}_{\mathcal{E}}$ has both adjoints, it preserves both joins and meets of global transfer systems, making it a lattice homomorphism. A lattice homomorphism need not preserve meet- or join-irreducibles, and indeed these are not normally preserved by $U^{\fami}_{\mathcal{E}}$. However our classification of irreducibles in terms of endopure transfers makes it straightforward to see that if $\fami$ has no new endopure transfers, then $U^{\fami}_{\mathcal{E}}$ induces bijections between irreducibles, and is thus an isomorphism of lattices.

\begin{coro}
    Let $\mathcal{E} \subset \fami$ be a subfamily. If every endopure transfer in $\fami$ is contained in $\mathcal{E}$ then the homomorphism of lattices $U^{\fami}_{\mathcal{E}} \colon \mathsf{Tr}^\mathsf{Gl}(\fami) \to \mathsf{Tr}^\mathsf{Gl}(\mathcal{E})$ induced by restriction of global transfer systems is an isomorphism.
\end{coro}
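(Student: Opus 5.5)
Since $U^{\fami}_{\mathcal{E}}$ is already a lattice homomorphism preserving arbitrary joins and meets (it has both adjoints), I would prove the corollary by showing that $L^{\fami}_{\mathcal{E}}$ is also surjective onto $\mathsf{Tr}^\mathsf{Gl}(\fami)$. Because $U^{\fami}_{\mathcal{E}} \circ L^{\fami}_{\mathcal{E}} = \id$, this yields $L^{\fami}_{\mathcal{E}} \circ U^{\fami}_{\mathcal{E}} = \id$ as well, so $U^{\fami}_{\mathcal{E}}$ is an isomorphism. Equivalently, it suffices to show that every $T \in \mathsf{Tr}^\mathsf{Gl}(\fami)$ satisfies $T = \overline{\la U^{\fami}_{\mathcal{E}}(T) \ra}$ in $\fami$. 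This makes $U^{\fami}_{\mathcal{E}}$ injective, and surjectivity already follows from the section identity.

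The plan is as follows. By \zcref{prop:join-condition}, $T$ is a join of join-irreducible global $\fami$-transfer systems. By \zcref{prop:join-minimal}, each of these is of the form $\overline{\la H \leq G \ra}$ for an endopure transfer $(H \leq G)$ in $\fami$, and each such transfer lies in $T$ because $\overline{\la H \leq G \ra} \subset T$. By hypothesis, each such $(H \leq G)$ is a transfer in $\mathcal{E}$, so it lies in $U^{\fami}_{\mathcal{E}}(T)$. This gives
\[T = \bigvee \overline{\la H \leq G \ra} \subset \overline{\la U^{\fami}_{\mathcal{E}}(T) \ra} \subset T,\]
where the last inclusion holds because $U^{\fami}_{\mathcal{E}}(T) \subset T$ and $T$ is a global transfer system. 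Hence $T = L^{\fami}_{\mathcal{E}} U^{\fami}_{\mathcal{E}}(T)$, and the claim follows.

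One point needs care. The hypothesis "every endopure transfer in $\fami$ is contained in $\mathcal{E}$" must be read up to isomorphism of transfers; since $\mathcal{E}$ is closed under isomorphisms, this is harmless. Membership of an isomorphic copy in $T$ is also unaffected, as noted after \zcref{defi:gts}. The argument only uses the join-irreducible half of the classification. Alternatively, one could argue through \zcref{thm:summary}: the contexts $\mathsf{EP}(\fami)$ and $\mathsf{EP}(\mathcal{E})$ coincide, because endopurity of $(H \leq G)$ depends only on $G$, and the relation $\leq_{\mathsf{EP}}$ only involves homomorphisms between the groups in question. One would then check that the resulting isomorphism is compatible with $U^{\fami}_{\mathcal{E}}$. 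The direct argument above avoids that compatibility check, so I would use it.
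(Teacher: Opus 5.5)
Your argument is correct, but it takes a more direct route than the paper. The paper goes through \zcref{thm:summary}. It notes that $(\varepsilon_{\mathcal{E}})^{-1}\circ\varepsilon_\fami$ sends $T$ to $\bigvee_{(H\leq G)\in\mathsf{EP}(\mathcal{E})\cap T}\overline{\la H\leq G\ra}$. It then checks that $U^{\fami}_{\mathcal{E}}$ agrees with this map, because $U^{\fami}_{\mathcal{E}}$ preserves joins and fixes $\overline{\la H\leq G\ra}$ for transfers in $\mathcal{E}$. Bijectivity is then inherited from the two concept-lattice isomorphisms.

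That route uses the full classification, both the join side and the meet side via \zcref{thm:FTFCA2}. It also tacitly uses that the formal contexts for $\fami$ and $\mathcal{E}$ coincide, which is exactly the compatibility check you chose to sidestep.

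You instead prove $L^{\fami}_{\mathcal{E}}\circ U^{\fami}_{\mathcal{E}}=\id$ directly from \zcref{prop:join-condition} and \zcref{prop:join-minimal}. This needs only the join-irreducible half of the theory and gives the inverse explicitly as $L^{\fami}_{\mathcal{E}}$.

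The key input is the same in both: every $T$ is the join of the systems generated by the endopure transfers it contains. The paper's version frames the result as ``the formal context does not change'', which is the conceptual reason and fits the later discussion of filtrations. Yours is more economical and self-contained.
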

\begin{proof}
    The inverse of the isomorphism $\varepsilon_\fami$ from \zcref{thm:summary} sends an element $(A, B)$ of the lattice $\conlat(\mathsf{EP}(\fami), \mathsf{EP}(\fami), {\leq_{\mathsf{EP}}})$ to the global transfer system $\bigvee_{(H \leq G) \in A} \overline{\la H \leq G \ra}$. Thus the composition $(\varepsilon_{\mathcal{E}})^{-1} \circ \varepsilon_\fami$ sends an $\fami$- global transfer system $T$ to $\bigvee_{(H \leq G) \in \mathsf{EP}(\mathcal{E}) \cap T} \overline{\la H \leq G \ra}$ in $\mathsf{Tr}^\mathsf{Gl}(\mathcal{E})$. Restriction preserves joins and $U^{\fami}_{\mathcal{E}}(\overline{\la H \leq G \ra}) = \overline{\la H \leq G \ra}$ when $(H \leq G)$ is in $\mathcal{E}$, so $U^{\fami}_{\mathcal{E}}(T)=\bigvee_{(H \leq G) \in \mathsf{EP}(\mathcal{E}) \cap T} \overline{\la H \leq G \ra}$ and $U^{\fami}_{\mathcal{E}} = (\varepsilon_{\mathcal{E}})^{-1} \circ \varepsilon_\fami$.
\end{proof}

We can filter our family, by group order for example, to obtain an easier description of the lattice of global transfer systems, and we can study the limit behaviour of these filtrations.

\begin{prop}
    Let $\fami$ be a family of finite groups, and let $\{\fami_n\}_{n \in \mathbb{N}}$ be a nested sequence of subfamilies with $\cup_n \fami_n = \fami$. The lattice $\mathsf{Tr}^\mathsf{Gl}(\fami)$ is isomorphic to the limit of the lattices $\mathsf{Tr}^\mathsf{Gl}(\fami_n)$ along the restriction functors.
\end{prop}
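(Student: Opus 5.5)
The plan is to build the comparison map directly and check that it is a bijection of posets. Write $U_n \colon \mathsf{Tr}^\mathsf{Gl}(\fami) \to \mathsf{Tr}^\mathsf{Gl}(\fami_n)$ and $U_{n,m} \colon \mathsf{Tr}^\mathsf{Gl}(\fami_m) \to \mathsf{Tr}^\mathsf{Gl}(\fami_n)$ for $n \leq m$ for the restriction maps $U^{\fami}_{\fami_n}$ and $U^{\fami_m}_{\fami_n}$. Concretely, each of these sends a transfer system $T$ to $T \cap \mathbb{I}(\fami_n)$. Since these maps are compatible, they induce a monotone map
\[\Phi \colon \mathsf{Tr}^\mathsf{Gl}(\fami) \to \lim_n \mathsf{Tr}^\mathsf{Gl}(\fami_n), \qquad T \mapsto (T \cap \mathbb{I}(\fami_n))_n.\]
The limit is computed in posets, and equally in complete lattices, since the $U_{n,m}$ preserve arbitrary meets and joins. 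Its elements are compatible sequences $(T_n)_n$ with $T_m \cap \mathbb{I}(\fami_n) = T_n$, ordered componentwise. I will show that $\Phi$ is an order isomorphism. Because a bijective map of posets that reflects the order is a lattice isomorphism, this suffices.

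\textbf{Injectivity and order reflection.} Every transfer $(H \leq G) \in \mathbb{I}(\fami)$ lies in some $\mathbb{I}(\fami_n)$, because $G \in \fami_n$ for some $n$ and families are closed under subgroups. Hence $T = \bigcup_n (T \cap \mathbb{I}(\fami_n))$. So if $\Phi(T) \leq \Phi(T')$ componentwise, then $T \subset T'$.

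\textbf{Surjectivity.} Given a compatible sequence $(T_n)$, set $T = \bigcup_n T_n$. I then check that $T$ is a global $\fami$-transfer system.

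For the partial order conditions, note that $T$ refines inclusion and contains all the identities $(G \leq G)$, since each $T_n$ does. For transitivity, if $(M \leq L)$ and $(L \leq K)$ are in $T$, both lie in some common $T_m$ because the sequence is nested, and hence $(M \leq K) \in T_m \subset T$.

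For restriction, take $(L \leq K) \in T_m$ and a homomorphism $\phi \colon A \to K$ with $A \in \fami$. Choose $m' \geq m$ with $A \in \fami_{m'}$. Since $T_{m'} \cap \mathbb{I}(\fami_m) = T_m$, we have $(L \leq K) \in T_{m'}$. Restriction inside $\fami_{m'}$ then gives $(\phi^{-1}(L) \leq A) \in T_{m'}$.

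Finally, compatibility gives $T \cap \mathbb{I}(\fami_n) = T_n$. Indeed, any transfer of $\fami_n$ lying in some $T_m$ with $m \geq n$ lies in $T_m \cap \mathbb{I}(\fami_n) = T_n$, and for $m < n$ we have $T_m \subset T_n$ directly.

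The only subtle step is this restriction axiom. It needs the compatibility condition to move a transfer up to a level $\fami_{m'}$ that contains the source group $A$. With that in place, the argument is routine. No use of endopurity is needed, although one could alternatively phrase the proof through the isomorphisms $\varepsilon_{\fami_n}$ of \zcref{thm:summary}, using $\mathsf{EP}(\fami) = \bigcup_n \mathsf{EP}(\fami_n)$.
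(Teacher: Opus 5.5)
Your proof is correct and follows the same route as the paper. Injectivity comes from $\bigcup_n \fami_n = \fami$, and surjectivity from the union of a compatible sequence. Your direct check that $\bigcup_n T_n$ is already a global transfer system, and that it restricts to $T_n$ on each $\fami_n$, spells out what the paper leaves implicit when it simply writes $\overline{\la \cup_n T_n \ra}$.
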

\begin{proof}
    The limit of the $\mathsf{Tr}^\mathsf{Gl}(\fami_n)$ along the restriction functors is the limit of the underlying sets, with meets and joins taken pointwise. The set function $\mathsf{Tr}^\mathsf{Gl}(\fami) \to \lim_n \mathsf{Tr}^\mathsf{Gl}(\fami_n)$ induced by the restrictions is injective because $\cup_n \fami_n = \fami$. Taking the global transfer system $\overline{\la \cup_n T_n \ra}$ shows that it is surjective, and it preserves meets and joins because the restriction homomorphisms do.
\end{proof}

\begin{defi}
    Given a family $\fami$ with finitely many endopure transfers, let $\delta(\fami)$ denote the density of the binary matrix $C(\fami)$ representing the formal context $(\mathsf{EP}(\fami), \mathsf{EP}(\fami), {\leq_{\mathsf{EP}}})$. The entry of $C(\fami)$ corresponding to $((L \leq K), (H \leq G))$ in $\mathsf{EP}(\fami) \times \mathsf{EP}(\fami)$ is $1$ if $(L \leq K) \leq_{\mathsf{EP}} (H \leq G)$, and $0$ otherwise. The density $\delta(\fami)$ is the number of ones in $C(\fami)$ divided by $\lvert \mathsf{EP}(\fami) \rvert^2$.
\end{defi}

\begin{ex}
    As $\overline{\la (H \leq G) \ra}$ is never contained in $\underline{\la (H \leq G) \ra}$, the diagonal of $C(\fami)$ consists of zeroes, and thus $\delta(\fami)$ is never $1$. If $\delta(\fami)=0$, then $\mathsf{Tr}^\mathsf{Gl}(\fami)$ is the complete lattice with two elements.
\end{ex}

Even though $\delta(\fami)$ will not be $0$ or $1$ in any interesting case, we can study the limit of the density of increasing families. This usually is a good indicator of the ``complexity'' of the resulting lattices, and can be used to form non-trivial bounds on the number of concepts in the formal context. We refer the reader to \cite{balchinspitz} for more details on this.

\begin{ex}
\label{ex:density-cyclic}
    For an intermediate example, consider $\fami=\mathcal{C}yc_p$, the family of cyclic $p$-groups. By \zcref{ex:cpn} the endopure transfers here are precisely $(C_{p^{n-1}} \leq C_{p^n})$. Let $\fami_n=\Sub(C_{p^n})$. The matrix $C(\fami_n)$ can be arranged to be strictly lower triangular, and its density is $\frac{n(n-1)}{2n^2}$, thus $\lim_n \delta(\fami_n) = \frac{1}{2}$.
\end{ex}

\subsection{Subgroups of a fixed group}

In this section we will consider the simplest family, namely $\Sub(G)$ for a fixed group $G$. First, we complete \zcref{ex:epofsl2} and compute the number of global transfer systems for the family $\Sub(\mathrm{SL}_2(\mathbb{F}_5))$, for which we know there are 21 join-irreducibles and meet-irreducibles. The following $21\times21$ matrix is the formal context matrix for this family, where the ordering of the rows and columns correspond exactly to the ordering in the order of \zcref{ex:epofsl2}:
\setcounter{MaxMatrixCols}{21}
\[ \resizebox{0.4\textwidth}{!}{$\begin{bmatrix} 0&0&0&0&0&0&0&0&0&0&0&1&1&1&1&1&1&1&1&1&1\\ 1&0&1&0&1&0&0&0&0&0&0&1&1&1&1&1&1&1&1&1&1\\ 1&1&0&1&1&0&1&1&0&1&1&1&1&1&1&1&1&1&1&1&1\\ 1&1&1&0&1&1&0&1&0&0&0&1&1&1&1&1&1&1&1&1&1\\ 1&1&1&1&0&1&1&0&1&0&0&1&1&1&1&1&1&1&1&1&1\\ 1&1&1&1&1&0&1&1&0&1&1&1&1&1&1&1&1&1&1&1&1\\ 1&1&1&1&1&1&0&1&1&0&1&1&1&1&1&1&1&1&1&1&1\\ 1&1&1&1&1&1&1&0&1&0&0&1&1&1&1&1&1&1&1&1&1\\ 1&1&1&1&1&1&1&1&0&1&1&1&1&1&1&1&1&1&1&1&1\\ 1&1&1&1&1&1&1&1&1&0&1&1&1&1&1&1&1&1&1&1&1\\ 1&1&1&1&1&1&1&1&1&1&0&1&1&1&1&1&1&1&1&1&1\\ 0&1&0&1&0&1&1&1&1&1&1&0&1&1&1&1&1&1&1&0&1\\ 0&0&0&0&0&0&0&0&0&0&0&1&0&1&1&1&1&0&1&0&0\\ 0&0&0&0&0&0&0&0&0&0&0&0&1&0&1&0&1&0&0&0&0\\ 0&0&0&0&0&0&0&0&0&0&0&1&1&1&0&1&1&1&0&1&1\\ 0&0&0&0&0&0&0&0&0&0&0&0&1&1&1&0&0&1&1&0&0\\ 0&1&0&0&0&1&1&1&0&1&0&0&1&1&1&1&0&1&1&0&1\\ 1&1&1&0&1&1&0&1&0&0&0&1&1&1&1&1&1&0&1&1&1\\ 1&1&1&0&1&1&0&1&0&0&0&1&1&1&1&1&1&1&0&1&1\\ 1&1&1&1&0&1&1&0&1&0&0&1&1&1&1&1&1&1&1&0&0\\ 1&1&1&1&1&1&1&0&1&0&0&1&1&1&1&1&1&1&1&1&0 \end{bmatrix}$} \]
On the first row we record when $\overline{\la K \leq H \ra} \subset \underline{\la  1 \leq \SL_2(\mathbb{F}_5)  \ra}$. When $H = \SL_2(\mathbb{F}_5)$ we do not get this inclusion, hence the 0s, while as $H$ is a proper subgroup of $\SL_2(\mathbb{F}_5)$ we do get this inclusion, hence the 1s. The rest of the matrix is filled out similarly\footnote{The authors used \texttt{GAP} to compute this matrix, as well as the other matrices in this section.}.

After inputting this matrix into any Formal Concept Analysis software we conclude that $|\mathsf{Tr}^\mathsf{Gl}(\mathrm{SL}_2(\mathbb{F}_5)| = 326$. This should be compared to the number of equivariant transfer systems for $\mathrm{SL}_2(\mathbb{F}_5)$ which is 29014. This is a general phenomenon, it is very rare for an equivariant transfer system to satisfy the condition to be a global transfer system.

In \zcref{tab:fam} we provide data for some families of finite groups along with data regarding the number of equivariant transfer systems for the group. Of course, for alternating and symmetric groups, one does not expect some formula for any of the terms appearing (indeed, we do not even explicitly know the subgroups of $S_n$ in general). For the generalized quaternion groups, we at least recognise patterns within $|\mathsf{EP}(G)|$ and $|\Sub(\mathsf{EP}(G))|$, which can be read off easily from the subgroup descriptions, along with the observation that their endocore is exactly $C_2$.

\begin{table}[h]
\begin{tabular}{|c|c|c|c|c|}
\hline
Group $G$& $|\mathsf{EP}(G)|$ & $|\mathsf{EP}(\mathrm{Sub}(G))|$ & $|\mathsf{Tr}^\mathsf{Gl}(G)|$ & $|\mathsf{Tr}(G)|$\\ \thickhline
$C_p$ & 1 & 1 & 2 &2\\ \hline
$C_{p^2}$ & 1 & 2 & 3 &5\\ \hline
$C_{p^n}$ & 1 & $n$ & $n+1$ & $\frac{1}{n+2}\binom{2n+2}{n+1}$ \cite{BBR}\\ \hline
$C_p^2$ & 0 & 1 & 2 & $2^{p+2}+p+1$ \cite{bao}\\ \thickhline
$A_4$ & 1 & 3 & 5 & 20\\ \hline
$A_5$ & 8 & 14 & 73 & 987\\ \hline
$A_6$ & 15 & 21 & 2891 & 37799146070\\ \thickhline
$S_3$ & 1 & 3 & 5 & 9\\ \hline
$S_4$ & 2 & 7 & 12 & 8691\\ \hline
$S_5$ & 9 & 27 & 1346 & 83598202\\ \hline
$S_6$ & 20 & 64 & 2545285 & unknown\\ \thickhline
$Q_8$ & 2 & 4 & 5 & 68\\ \hline
$Q_{16}$ & 5 & 10 & 15 & 1090\\ \hline
$Q_{32}$ & 7 & 18 & 60 & 23122\\ \hline
$Q_{64}$ & 9 & 28 & 272 &621752 \\ \hline
$Q_{128}$ & 11 & 40 & 1287 & unknown \\ \hline
\end{tabular}\caption{Data for global transfer systems for the family $\Sub(G)$.}\label{tab:fam}
\end{table}

\begin{rem}
    It is, of course, natural to ask what properties the lattice of global transfer systems has. Such questions are addressed in \cite{MR5020213} for equivariant transfer systems where it is shown that $\mathsf{Tr}(G)$ is distributive if and only if $G = C_p$ (see \cite[Proposition 4.1]{MR5020213}). In the global case, we see that $\mathsf{Tr}^\mathsf{Gl}(\fami)$ is totally ordered, and thus distributive, in some of the examples computed in this paper, but this is not always the case. Indeed, we see the failure of distributivity for $\fami = \Sub(\SL_2(\mathbb{F}_5))$. In particular
    \[ \resizebox{1\textwidth}{!}{$\left(\overline{\la 1 \leq C_2 \ra} \vee \overline{\la C_2 \leq \mathrm{SL}_2(\mathbb{F}_5) \ra}\right) \wedge \overline{\la 1 \leq \mathrm{SL}_2(\mathbb{F}_5) \ra} \neq \left(\overline{\la 1 \leq C_2 \ra} \wedge \overline{\la 1 \leq \mathrm{SL}_2(\mathbb{F}_5) \ra}\right) \vee \left(\overline{\la C_2 \leq \mathrm{SL}_2(\mathbb{F}_5) \ra} \wedge \overline{\la 1 \leq \mathrm{SL}_2(\mathbb{F}_5) \ra}\right)$}\]
    as the first term contains $(1 \leq \mathrm{SL}_2(\mathbb{F}_5))$ but the second doesn't.
\end{rem}

\subsection{Finite abelian groups of bounded order}\label{subsec:ab}

Let $\mathcal{A}(n)$ denote the family of abelian groups of order less than or equal to $n$, and $\mathcal{A}$ the family of all finite abelian groups. In \cite{abeliantransfers} the second author calculates that $\mathsf{Tr}^\mathsf{Gl}(\mathcal{A}) \cong \prod_p (\omega + 1)$. Using the theory built in this paper one can conclude that $\mathsf{Tr}^\mathsf{Gl}(\mathcal{A}_n) \cong \prod_{\substack{p \leq n \\ p \,\mathrm{ prime}}} [f_p(n)] \quad \mathrm{where} \quad f_n(p) \coloneq \left\lfloor \frac{\log n}{\log p} \right\rfloor$, and indeed the limit of these lattices as $n \to \infty$ retrieves the calculation of \cite{abeliantransfers} using \zcref{subsec:filts}. The goal of this section is to prove results regarding the density of the matrix $C(\mathcal{A}(n))$.

\begin{thm}\leavevmode
    \[
        \delta(\mathcal{A}(n)) \sim 1-\frac{\log n}{n}
    \]
    In particular, as $n\to\infty$, we have that $\delta(\mathcal{A}(n)) \to 1$.
\end{thm}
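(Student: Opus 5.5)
By \zcref{prop:abendo}, the endopure transfers in $\mathcal{A}(n)$ are exactly $(C_{p^{k-1}} \leq C_{p^k})$ for primes $p$ and $1 \leq k$ with $p^k \leq n$. The plan is to first count these, then compute $\leq_{\mathsf{EP}}$ entrywise, and finally count the zeros of $C(\mathcal{A}(n))$.

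\textbf{Size of $\mathsf{EP}(\mathcal{A}(n))$.} The number of endopure transfers is
\[N(n) = \sum_{p \leq n} f_p(n) = \#\{p^k \leq n : k \geq 1\}.\]
This is the number of prime powers up to $n$, which is $\pi(n) + O(\sqrt n)$. Hence $N(n) \sim n/\log n$ by the prime number theorem.

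\textbf{The relation.} Take $(L \leq K) = (C_{q^{j-1}} \leq C_{q^j})$ and $(H \leq G) = (C_{p^{k-1}} \leq C_{p^k})$. We have $(L\leq K) \not\leq_{\mathsf{EP}} (H \leq G)$ exactly when there is a homomorphism $\phi\colon C_{p^k} \to C_{q^j}$ with $\phi(H) \leq L$ but $\phi(G) \not\leq L$.
\begin{itemize}
\item If $p \neq q$, every such $\phi$ is trivial, so $\phi(G) = 1 \leq L$ and the entry is $1$.
\item If $p = q$, the image $\phi(G)$ is a cyclic subgroup $C_{p^m}$ with $m \leq \min(k,j)$, and $\phi(H)$ is the index-$p$ subgroup of $\phi(G)$ when $m \geq 1$. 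We need $\phi(G) \not\leq C_{p^{j-1}}$, which forces $m = j$, and hence $j \leq k$. Then $\phi(H) = C_{p^{j-1}} = L$, so the condition holds.
\end{itemize}
So within the $p$-block the entry is $0$ iff $j \leq k$. This matches the strictly lower-triangular picture of \zcref{ex:density-cyclic}.

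\textbf{Counting zeros.} The zeros all lie in the diagonal blocks, one block per prime. The block for $p$ has size $f_p(n)$ and contains $f_p(n)(f_p(n)+1)/2$ zeros. Therefore
\[1 - \delta(\mathcal{A}(n)) = \frac{\sum_{p \leq n} f_p(n)(f_p(n)+1)/2}{N(n)^2}.\]

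\begin{itemize}
\item For $p > \sqrt n$ we have $f_p = 1$, so these primes contribute $\pi(n) - \pi(\sqrt n)$ zeros.
\item The primes $p \leq \sqrt n$ contribute at most $\sum_{p \leq \sqrt n} f_p^2 \leq \pi(\sqrt n)\log^2 n / \log^2 2 = O(\sqrt n \log n)$ zeros.
\end{itemize}
So the numerator is $\pi(n) + O(\sqrt n \log n) \sim n/\log n$. The denominator is $N(n)^2 \sim (n/\log n)^2$. Hence $1 - \delta \sim (n/\log n)^{-1} = \log n/n$, which is the claimed asymptotic $\delta(\mathcal{A}(n)) \sim 1 - \frac{\log n}{n}$, and in particular $\delta \to 1$. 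Strictly, "$\sim$" here should be read as the statement $1-\delta \sim \log n/n$; that is what the argument proves.

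\textbf{Main obstacle.} The only nontrivial input is the analytic number theory, namely $\pi(n) \sim n/\log n$ and the control of the prime-power corrections. I would cite the prime number theorem together with the elementary bounds above. The block computation of the relation is routine once the explicit description from \zcref{prop:abendo} is available.
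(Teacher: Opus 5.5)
Your proposal is correct and follows essentially the same route as the paper: count prime powers to get $|\mathsf{EP}(\mathcal{A}(n))| \sim n/\log n$, observe that the zeros of $C(\mathcal{A}(n))$ lie in per-prime triangular blocks, and conclude that there are $\sum_p f_p(n)(f_p(n)+1)/2 = \sum_{m\geq 1} m\,\pi(n^{1/m})$ zeros, dominated by $\pi(n)$. You control the lower-order terms by splitting the primes at $\sqrt n$ instead of regrouping by exponent, and your explicit check that the entry is $0$ exactly when $p=q$ and the row exponent is at most the column exponent has the right direction; the paper's stated condition $\ell \leq k$ is reversed, which is harmless because the block count is symmetric.
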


\begin{proof}
    We will require two calculations, first we will need to know the behaviour of the number of endopure transfers in $\mathcal{A}(n)$, and secondly we will count how many ones occur in the context matrix.

    We will begin by tackling $|\mathsf{EP}(\mathcal{A}(n))|$. This is counted by the sequence
    \[
    a(n) \coloneq \# \{\text{prime powers} \leq n\}.
    \]
    This is clear as the group $C_{p^k}$ contributes the endopure transfer $(C_{p^{k-1}} \leq C_{p^k})$, and these are the only endopure transfers that appear by \zcref{prop:abendo}. One sees that $a(n) \sim \frac{n}{\log n}$ using the prime number theorem.

    We next compute the number of zeros in the context matrix, which we denote $Z(n)$. This is akin to determining when $\overline{\la C_{p^{k-1}} \leq C_{p^k} \ra}$ is not contained in $\underline{\la C_{q^{\ell-1}} \leq C_{q^\ell}\ra }$. Given \zcref{prop:containment-condition} this is the case whenever $p=q$ and $\ell \leq k$. Thus
    \[
    Z(n+1) = Z(n) + b(n)
    \]
    where
    \[
    b(n) \coloneq \begin{cases} m & n=p^m \\ 0 & \text{else}\end{cases}
    \]

It remains to determine the asymptotic growth of $Z(n)$. Writing $Z(n) = \sum_{x \leq n} b(x)$. As non-prime powers contribute 0, and $p^m$ contributes $m$, $Z(n) = \sum_{p^m \leq n} m$. Regrouping the sum to be dependent on the exponent, that is, we obtain $Z(n) = \sum_{m \geq 1} \sum_{p^m \leq n} m $. For a fixed $m$, every prime $p$ satisfying $p^m \leq n$ contributes $m$. But the condition that $p^m \leq n$ is equivalent to the condition that $p \leq n^{1/m}$. The number of primes counted by this is $\pi(n^{1/m})$ Therefore for fixed $m$ we see a contribution of $m \pi(n^{1/m})$, summing over all exponents we get $Z(n) = \sum_{m\geq 1} m \pi(n^{1/m})$. Now 
\[
Z(n) = \pi(n) + 2 \pi(n^{1/2}) + 3 \pi(n^{1/3}) + \cdots
\]
and again using the prime number theorem, we know the first term is asymptotically $\frac{n}{\log n}$ and this in fact dominates as $m\pi(n^{1/m}) \ll \frac{m^2 n^{1/m}}{\log n}$. Hence we conclude that $Z(n) \sim \frac{n}{\log n}$ as required.

We are finally in a position to compute the density as
\[
\delta(n) \sim 1-\frac{Z(n)}{a(n)^2} \sim1-\dfrac{\frac{n}{\log n}}{(\frac{n}{\log n})^2} =1- \frac{\log n}{n}
\]
as claimed.
\end{proof}

\subsection{Extraspecial $p$-groups}

Next, we consider the family $\mathcal{E}_p$ of finite extraspecial $p$-groups for a fixed prime (along with their subgroups). Recall that an \textit{extraspecial $p$-group} is a $p$-group $G$ such that its center $Z(G)$ is cyclic of order $p$, and the quotient $G/Z(G)$ is a non-trivial elementary abelian $p$-group. Note that it follows that the derived subgroup $G'$ is equal to the center for any extraspecial $p$-group.

For a fixed $p$, there are exactly two extraspecial $p$-groups of order $p^{1+2n}$ which we denote $p^{1+2n}_+$ and $p^{1+2n}_-$. When $p=2$ and $n=1$ we recover familiar groups: $2^{1+2}_+ \cong D_4$ and $2^{1+2}_- \cong Q_8$. We will also use the standard fact that any extraspecial $p$-group can be written as an iterated central product of $p^{3}_-$ and $p^{3}_+$, see \cite[Theorem~5]{Newman}. Recall that for a group $G$, the \emph{exponent} of $G$ is the smallest $n>0$ such that $g^n=1$ for all $g \in G$. We will denote the exponent by $\exp(G)$. For $p$ odd, the group $p^{1+2n}_+$ has exponent $p$, while the exponent of $p^{1+2n}_-$ is $p^2$. In the case $p=2$, we have that all extraspecial 2-groups have exponent 4. We start by determining the images of non-injective endomorphisms of extraspecial $p$-groups.

\begin{lemm}\label{lem:imageofendo}
    Let $G$ be an extraspecial $p$-group. Then the following are equivalent for $\phi \in \mathrm{End}(G)$:
    \begin{enumerate}
        \item $\phi$ is non-injective,
        \item $Z(G) \leq \ker(\phi)$,
        \item $\mathrm{Im}(\phi)$ is an elementary abelian $p$-group.
    \end{enumerate}
\end{lemm}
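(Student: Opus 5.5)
I will prove the cycle of implications (1)$\Rightarrow$(2)$\Rightarrow$(3)$\Rightarrow$(1), using only two standard facts about an extraspecial $p$-group $G$ recalled above: $Z(G)=G'=\Phi(G)$ is cyclic of order $p$, and $G/Z(G)$ is elementary abelian. Write $Z=Z(G)$.

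For (1)$\Rightarrow$(2), the key observation is that every non-trivial normal subgroup of $G$ contains $Z$. Indeed, if $1\neq N\trianglelefteq G$, then $N\cap Z(G)\neq 1$, since a non-trivial normal subgroup of a $p$-group meets the centre non-trivially. This follows from the class equation applied to the conjugation action of $G$ on $N$. As $|Z|=p$, we get $Z\leq N$. Applying this to $N=\ker\phi$, which is non-trivial when $\phi$ is non-injective, gives $Z\leq\ker\phi$.

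For (2)$\Rightarrow$(3), if $Z\leq \ker\phi$ then $\phi$ factors through $G/Z$. Hence $\mathrm{Im}(\phi)$ is a quotient of the elementary abelian group $G/Z$, and so is itself elementary abelian. In the degenerate case where it is trivial, we regard it as elementary abelian trivially.

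For (3)$\Rightarrow$(1), suppose $\mathrm{Im}(\phi)$ is elementary abelian, hence abelian. Then $\phi(G')=\phi(G)'=1$, so $G'=Z\leq\ker\phi$. Since $Z\neq 1$, the map $\phi$ is non-injective. Alternatively, one could note that $G$ itself is non-abelian, so an injective $\phi$ would have image isomorphic to $G$, which is non-abelian.

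I do not expect a serious obstacle. The only step needing a genuine group-theoretic input is (1)$\Rightarrow$(2), namely that normal subgroups of $p$-groups meet the centre non-trivially. I will state this and justify it in one line via the class equation: $N$ is a union of conjugacy classes of $p$-power sizes, and $\{1\}$ is one such class, so at least $p-1$ further central elements must lie in $N$.
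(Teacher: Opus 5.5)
Your proof is correct and follows the same cycle of implications as the paper, including the reduction of (1)$\Rightarrow$(2) to the claim that every non-trivial normal subgroup of $G$ contains $Z(G)$. The only difference is how you justify that claim. You use the class equation, so your argument works for any $p$-group whose centre has order $p$. The paper instead takes a non-central $n \in N$ and notes that a non-trivial commutator $[n,g]$ lies in $N \cap G' = N \cap Z(G)$, which relies on $G' \leq Z(G)$ rather than on $G$ being a $p$-group.
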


\begin{proof}
    We begin by proving $(1)\Rightarrow(2)$. It suffices to prove that every non-trivial normal subgroup $N$ of $G$ contains $Z(G)$.

    If $N \leq Z(G)$, then since $|Z(G)|=p$, it follows that $N = Z(G)$. Else, suppose $N \not\leq Z(G)$, that is, there exists some $n \in N$ and $g \in G$ such that $[n,g] \neq 1$. Since $N$ is normal in $G$, we have that $[n,g] \in N$. Moreover $[n,g] \in G' = Z(G)$, and since $|Z(G)| = p$ we get $Z(G) = \langle [n,g] \rangle \leq N$.

    Finally, $(2) \Rightarrow (3)$ follows from the fact that $G/Z(G)$ is elementary abelian by definition, and $(3) \Rightarrow (1)$ is immediate.\qedhere
\end{proof}

Recall from \zcref{defi:endocore} that the \emph{endocore} of a group $G$ is the subgroup generated by the images of non-injective endomorphisms of $G$. In light of \zcref{lem:imageofendo}, and since any elementary abelian $p$-group surjects onto $C_p$, we know that for an extraspecial $p$-group its endocore is precisely the subgroup generated by all the elements of order $p$.  \zcref{lem:endocore} then shows that any a subgroup of an extraspecial group $G$ that contains all elements of order $p$ is endopure. We will now show the converse in a more general setting.

\begin{prop}
\label{prop:order2}
    Let $G$ be a $p$-group of order $> p$ and let $H \leq G$ be endopure in $G$. Then $H$ contains all elements of $G$ of order $p$.
\end{prop}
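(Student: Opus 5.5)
The plan is to use the reformulation of endopurity in \zcref{rem:reform}: $H$ is endopure in $G$ if and only if every non-injective $\phi \in \mathrm{End}(G)$ with $\phi(H) \leq H$ satisfies $\phi(G) \leq H$. So, given an element $x \in G$ of order $p$, I will build a non-injective endomorphism $\phi$ of $G$ with $\phi(H) = 1$ and $\phi(G) = \la x \ra$. Endopurity then forces $\la x \ra \leq H$, which is exactly the claim.

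\textbf{Construction of $\phi$.} Since $H$ is endopure, it is by definition a proper subgroup $H \lneq G$. Choose a maximal subgroup $M$ of $G$ with $H \leq M$. Because $G$ is a finite $p$-group, $M$ is normal of index $p$, so $G/M \cong C_p$. Fix an isomorphism $G/M \to \la x \ra$, which exists because $x$ has order $p$. Then define
\[
\phi \colon G \to G/M \xrightarrow{\ \cong\ } \la x \ra \hookrightarrow G .
\]

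\textbf{Checking the hypotheses.} The map $\phi$ has the following properties.
\begin{enumerate}
    \item Its kernel is $M$, and $|M| = |G|/p > 1$ because $|G| > p$. Hence $\phi$ is non-injective. This is the only place the hypothesis $|G| > p$ is used; it is needed, since for $G = C_p$ the trivial subgroup is endopure and does not contain the elements of order $p$.
    \item Since $H \leq M = \ker \phi$, we have $\phi(H) = 1 \leq H$.
    \item Its image is $\phi(G) = \la x \ra$.
\end{enumerate}
By \zcref{rem:reform}, endopurity of $H$ gives $\la x \ra = \phi(G) \leq H$, so $x \in H$. As $x$ was an arbitrary element of order $p$, $H$ contains all of them.

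There is no serious obstacle. The one step that needs care is the existence of a normal subgroup of index $p$ containing $H$, and this is the standard fact that maximal subgroups of finite $p$-groups are normal of index $p$.
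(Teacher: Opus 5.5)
Your proposal is correct and is essentially the paper's proof. The paper builds the same endomorphism $G \to G/M \cong C_p \cong \langle x \rangle \hookrightarrow G$ from a maximal (hence normal, index $p$) subgroup $M \supseteq H$, assumes $x \notin H$, and observes that $H \leq \phi^{-1}(H) = M \lneq G$. This is just the unwound form of your appeal to the reformulation ``$\phi(H) \leq H$ implies $\phi(G) \leq H$''. Your explicit remark that $|G| > p$ is exactly what makes $\phi$ non-injective is a point the paper leaves implicit.
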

\begin{proof}
    Assume for a contradiction that we have $x \in G$ of order $p$ and $x \notin H$. Since $H$ is proper in $G$ there exists a maximal subgroup $H \leq M \lneq G$. As $G$ is a finite $p$-group, $M$ is necessarily normal of index $p$. As such the quotient map provides a surjection $q \colon G \to C_p$ with $\ker(q) = M$. As $x$ has order $p$ we can identify $\la x \ra \cong C_p$, and we obtain a non-injective endomorphism $\phi$ of $G$ defined as $\phi(g) = x^{q(g)}$.  Let $k \in H$, then $q(H) = 1$, and hence $\phi(k) = 1 \in G$, giving $H \leq \phi^{-1}(H)$. We get via construction that $\phi^{-1}(H) = \mathrm{ker}(q) = M \lneq G$ and thus $H$ is not endopure.
\end{proof}

\begin{coro}\label{prop:exponents}
    Let $G$ be a finite group with exponent $p$ for $p$ prime. Then 
    \[
    \mathsf{EP}(G) = \begin{cases} \{1\} & G \cong C_p \\\varnothing &\text{else}\end{cases}
    \]
\end{coro}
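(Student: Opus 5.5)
A finite group of exponent $p$ is a $p$-group, since every element has order dividing $p$ and so by Cauchy's theorem no other prime divides $|G|$. The plan is to split into the cases $|G| = p$ and $|G| > p$ and apply \zcref{prop:order2} in the second case. The trivial group is excluded implicitly: for $G=1$ there are no proper subgroups, so $\mathsf{EP}(G)=\varnothing$, which agrees with the ``else'' branch.

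\emph{Case $|G| > p$.} Let $H \lneq G$ be endopure. By \zcref{prop:order2}, $H$ contains every element of $G$ of order $p$. Since $G$ has exponent $p$, every non-identity element has order exactly $p$, so $H$ contains all of $G$ and $H = G$. This contradicts properness, so $\mathsf{EP}(G) = \varnothing$. Note that a group of exponent $p$ with $|G|>p$ cannot be cyclic, so this case is precisely the ``else'' case.

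\emph{Case $|G| = p$.} Here $G \cong C_p$ and its only proper subgroup is $1$. By \zcref{ex:cpn} with $n=1$, the subgroup $C_{p^0}=1$ is endopure. One can also check this directly: every non-injective endomorphism of $C_p$ is trivial, hence its image lies in $1$ and \zcref{rem:reform} applies. So $\mathsf{EP}(C_p) = \{1\}$.

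There is no real obstacle. The substantive input is \zcref{prop:order2}, and the only care needed is the hypothesis $|G|>p$ of that proposition, which is exactly what forces the separate treatment of $C_p$.
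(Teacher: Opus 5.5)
Your proposal is correct and is essentially the paper's proof: $1 \leq C_p$ is endopure, and for $|G|>p$ \zcref{prop:order2} forces any endopure subgroup to contain every non-identity element, hence to equal $G$. Your aside about $G=1$ is not needed, since the trivial group has exponent $1$, not $p$.
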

\begin{proof}
    We know that $1 \leq C_p$ is endopure. \zcref{prop:order2} shows the rest.
\end{proof}

\begin{coro}\label{thm:endoiffcore}
    Let $G$ be an extraspecial $p$-group. Then $H \lneq G$ is endopure if and only if $\Omega(G) \leq H$.
\end{coro}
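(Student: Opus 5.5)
The corollary follows by combining the two directions already prepared in the text. The backward direction is \zcref{lem:endocore} verbatim: any proper $H$ with $\Omega(G) \leq H$ is endopure. So the content is the forward direction, and here I will use \zcref{lem:imageofendo} to identify $\Omega(G)$ explicitly and then \zcref{prop:order2} to conclude.

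First I will show that $\Omega(G)$ is exactly the subgroup $\la x \in G \mid x^p = 1 \ra$ generated by the elements of order $p$. For the inclusion $\Omega(G) \leq \la x \mid x^p=1\ra$, \zcref{lem:imageofendo} says every non-injective $\phi \in \mathrm{End}(G)$ has elementary abelian image. Hence $\phi(G)$ is generated by elements of order dividing $p$, and so is the subgroup generated by all such images.

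For the reverse inclusion, take $x$ of order $p$. Since $G/\Phi(G)$ is a non-trivial elementary abelian $p$-group, there is a surjection $q \colon G \to C_p$. I define $\phi(g) = x^{q(g)}$, which is an endomorphism with image $\la x \ra$. It is non-injective because $|G| \geq p^3 > p$. Thus $x \in \Omega(G)$.

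Now let $H \lneq G$ be endopure. Extraspecial groups have order at least $p^3 > p$, so \zcref{prop:order2} applies and $H$ contains every element of order $p$. Therefore $H$ contains the subgroup they generate, which is $\Omega(G)$ by the previous step. This gives the forward direction.

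I do not expect a real obstacle. The only point needing care is the identification of $\Omega(G)$, and in particular checking that the endomorphism $\phi(g)=x^{q(g)}$ is non-injective, which follows from the order bound. The heavy lifting was already done in \zcref{lem:imageofendo} and \zcref{prop:order2}.
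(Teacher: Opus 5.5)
Your proof is correct and is essentially the paper's argument. The backward direction is \zcref{lem:endocore}, and the forward direction combines \zcref{prop:order2} with the identification of $\Omega(G)$ as the subgroup generated by elements of order $p$, which the paper also derives from \zcref{lem:imageofendo} just before the corollary. Only the inclusion $\Omega(G) \leq \langle x \in G \mid x^p = 1 \rangle$ is actually needed, so your reverse-inclusion step is harmless but superfluous.
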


\begin{proof}
    Let $H \lneq G$ be such that $\Omega(G) \leq H$. By \zcref{lem:endocore} $H$ is endopure. By \zcref{prop:order2}, we see that an endopure subgroup $H$ must contain all elements of order $p$. In particular, we see that $\Omega(G) \leq H$.
\end{proof}

We will now explicitly characterize the endocore of extraspecial $p$-groups for odd $pp$, which will allow us to determine their endopure subgroups by the above result.

\begin{lemm}\label{lem:poddes}
    Let $G$ be an extraspecial $p$-group for $p$ odd. Then
    \[
    \Omega(G) = \{x \in G \mid x^p = 1\} \leq G.
    \]
\end{lemm}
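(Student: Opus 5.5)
Write $\Omega_1 \coloneq \{x \in G \mid x^p = 1\}$. The proof has three steps: show that $\Omega_1$ is a subgroup, show $\Omega(G) \leq \Omega_1$, and show $\Omega_1 \leq \Omega(G)$.

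\emph{$\Omega_1$ is a subgroup.} Since $p$ is odd and $G$ has nilpotency class $2$ (because $G' = Z(G)$ is central), we have for all $x,y \in G$
\[(xy)^p = x^p y^p [y,x]^{\binom{p}{2}}.\]
The commutator $[y,x]$ lies in $Z(G)$, which has order $p$, and $p \mid \binom{p}{2}$ because $p$ is odd. Hence $(xy)^p = x^p y^p$. It follows that $\Omega_1$ is closed under products, and it is clearly closed under inverses, so it is a subgroup of $G$. This is the only step where oddness of $p$ is used, and I expect it to be the main point needing care, namely the Hall--Petrescu-type identity in class $2$. That identity is standard and follows by induction on the exponent, using that commutators are central.

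\emph{$\Omega(G) \leq \Omega_1$.} Let $\phi$ be a non-injective endomorphism of $G$. By \zcref{lem:imageofendo}, $\Img \phi$ is elementary abelian, so every element of $\phi(G)$ has order dividing $p$. Thus $\phi(G) \subseteq \Omega_1$. Since $\Omega_1$ is a subgroup, the subgroup generated by all such images is also contained in $\Omega_1$, giving $\Omega(G) \leq \Omega_1$.

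\emph{$\Omega_1 \leq \Omega(G)$.} Let $x \in G$ have order $p$ (the case $x = 1$ is trivial). Pick a maximal subgroup $M \lneq G$; it is normal of index $p$, which gives a surjection $q \colon G \to C_p$. Define $\phi(g) = x^{q(g)}$, exactly as in the proof of \zcref{prop:order2}. This is a homomorphism with image $\la x \ra$, and it is non-injective because $|G| > p$. Hence $x \in \Omega(G)$, and the two inclusions give the claimed equality.
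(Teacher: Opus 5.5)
Your proof is correct and takes essentially the same route as the paper. In both, the key step is the class-$2$ identity $(xy)^p = x^p y^p [y,x]^{\binom{p}{2}}$ with $p \mid \binom{p}{2}$ for odd $p$, which shows that the elements of order dividing $p$ form a subgroup. The paper leaves the two inclusions to the remark preceding the lemma (images of non-injective endomorphisms are elementary abelian, and any elementary abelian $p$-group surjects onto $C_p$), and you make them explicit through the lemma on images of endomorphisms and the map $g \mapsto x^{q(g)}$.
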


\begin{proof}
    Clearly $\Omega(G)$ is closed under inverses. We need only show it is closed under the operation. To this end let $x,y \in \Omega(G)$. As $G' = Z(G)$ for extraspecial $p$-groups, we see that
    \[
    (xy)^p = x^py^p [y,x]^{\binom{p}{2}}=[y,x]^{\binom{p}{2}}.
    \]
    Again, as $G' = Z(G) \cong C_p$ we have that $[y,x]^p = 1$. As $p$ is odd, $\binom{p}{2}$ is divisible by $p$, thus $[y,x]^{\binom{p}{2}} =1$ and $(xy)^p  = 1$ as required.
\end{proof}

We can go further and explicitly describe the structure of $\Omega(G)$ in the case of exponent $p^2$ (the case of exponent $p$ is trivial):

\begin{lemm}\label{lem:omegaexplicit}
    Let $p$ be an odd prime and let $G \cong p_-^{1+2n}$ be the extraspecial $p$-group of order $p^{1+2n}$ and exponent $p^2$. Then
    \[
    \Omega(G) \cong C_p \times p_+^{1+2(n-1)}. 
    \]
\end{lemm}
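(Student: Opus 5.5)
By \zcref{lem:poddes}, $\Omega(G)$ is the set of elements of order dividing $p$. The plan is to use the power map to show this is a subgroup of index $p$, and then to identify its isomorphism type through the central product decomposition.

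First, the $p$-th power map $x \mapsto x^p$ is a homomorphism $G \to Z(G)$. For $p$ odd we have $(xy)^p = x^p y^p [y,x]^{\binom{p}{2}} = x^py^p$, and $x^p$ is central because $x^p \in \Phi(G) = Z(G)$ (as $G/Z(G)$ is elementary abelian). Its kernel is exactly $\Omega(G)$. Since $G$ has exponent $p^2$, the map is nonzero, so it surjects onto $Z(G) \cong C_p$. Hence $|\Omega(G)| = p^{2n}$, and $Z(G) \leq \Omega(G)$.

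Second, I will determine the structure. By \cite[Theorem~5]{Newman}, $G$ is a central product $E_1 \circ \cdots \circ E_n$ of extraspecial groups of order $p^3$ sharing the centre $Z$. For $p$ odd, $p^3_+ \circ p^3_- \cong p^3_- \circ p^3_-$, so we may take $E_1 \cong p^3_-$ and $E_2, \dots, E_n \cong p^3_+$; exactly one factor has exponent $p^2$. Let $E' = E_2 \circ \cdots \circ E_n \cong p_+^{1+2(n-1)}$, which has exponent $p$ and so lies in $\Omega(G)$. Write $E_1 = \la a, b \ra$ with $a^p$ generating $Z$ and $b$ of order $p$. Then $\Omega(E_1) = \la b, Z\ra \cong C_p \times C_p$. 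Consequently $\Omega(G) \supseteq \la b \ra E' = \la b\ra Z E'$. This subgroup has order $p \cdot p^{1+2(n-1)} = p^{2n}$, because $b \notin E'$ (its image in $G/Z$ lies in the $E_1$ summand). By the order count it equals $\Omega(G)$. Moreover $b$ commutes with $E'$ and $\la b \ra \cap E' = 1$, so $\Omega(G) = \la b \ra \times E' \cong C_p \times p_+^{1+2(n-1)}$. The case $n=1$ gives $C_p \times Z = C_p \times C_p$, consistent with reading $p_+^{1}$ as $C_p$.

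I expect the main obstacle to be justifying the normal form with only one $p^3_-$ factor, i.e.\ the isomorphism $p^3_- \circ p^3_- \cong p^3_+ \circ p^3_-$ for odd $p$. If this is not available from \cite{Newman}, the fallback is to avoid the decomposition altogether. $\Omega(G)/Z$ is a hyperplane $W$ in the symplectic space $V = G/Z$ (it is the kernel of the induced linear map $V \to Z$). Its radical $W^\perp \cap W$ is one-dimensional, since $W^\perp$ is a line contained in $W$ for a hyperplane. Lift a complement of the radical to obtain a nondegenerate $(2n-2)$-dimensional subspace. Its preimage is an extraspecial group of exponent $p$, hence $\cong p_+^{1+2(n-1)}$. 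The radical line lifts to a central element of order $p$ outside that preimage, and this gives the direct product decomposition.
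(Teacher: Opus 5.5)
Your argument is correct and is essentially the paper's proof. The paper also invokes Newman's decomposition $p_-^{1+2n} \cong p_-^{1+2} \circ p_+^{1+2(n-1)}$, so the one-factor normal form you worried about is exactly what is cited. It then picks a non-central element $d$ of order $p$ in the $p_-^{1+2}$ factor and concludes $\Omega(G) = \la d \ra \times p_+^{1+2(n-1)}$. The only difference is how equality is obtained: the paper identifies $\Omega(G)$ directly from $(xy)^p = x^p$ for $x$ in the first factor and $y$ in the second, whereas you use the order count $|\Omega(G)| = p^{2n}$ coming from the power map being a surjective homomorphism onto $Z(G)$.
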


\begin{proof}
    We will use the explicit description of $p_-^{1+2n}$ as a central product given in \cite[Theorem~5]{Newman} to determine the elements of order $p$. We have that $p_-^{1+2n} \cong N \circ p_{+}^{1+2(n-1)}$ where $N \cong p_-^{1+2}$ has explicit presentation
    \[
    N \coloneq \la a,b,c \mid a^p=b^p=c,\,c^p=1,\,ba=abc,\,ca=ac,\,cb=bc \ra.
    \]
    From direct calculation, setting $d=ab^{-1}$, we see that $\Omega(N) = \la d,c \ra \cong C_p \times C_p$.

    Moving to $G$, any element of $G$ is of the form $g=xy$ for $x \in N$ and $y \in p_+^{1+2(n-1)}$. As $p_+^{1+2(n-1)}$ is of exponent $p$, we have $y^p=1$, and since $N$ and $p_+^{1+2(n-1)}$ commute (due to $G$ being a central product) we know that $g^p=(xy)^p=x^p$. As such, $g^p=1$ if and only if $x^p=1$ if and only if $x \in \la d,c\ra$. As $c \in p_+^{1+2(n-1)}$ due to the centers being identified, it follows that $\Omega(G) = \la d, p_+^{1+2(n-1)}\ra$. As $d \not\in N \cap p_+^{1+2(n-1)} = \la c \ra$, $\Omega(G)$ splits as a direct product as desired.
\end{proof}

\begin{coro}
    \label{coro:endopure-extra-odd}
    Let $G$ be an extraspecial $p$-group for odd $p$. Then
    \[
    \mathsf{EP}(G) = \begin{cases} \varnothing & G \cong p_+^{1+2n} \\ \{\Omega(G)\} &  G \cong p_-^{1+2n} \end{cases}
    \]
\end{coro}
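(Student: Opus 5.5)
By \zcref{thm:endoiffcore}, the endopure subgroups of an extraspecial $p$-group $G$ are exactly the proper subgroups $H$ with $\Omega(G) \leq H \lneq G$. We therefore need to show two things: that $\Omega(G)=G$ for $G \cong p_+^{1+2n}$, and that $\Omega(G)$ is a maximal subgroup of $G$ for $G \cong p_-^{1+2n}$. Since $\mathsf{EP}(G)$ is defined up to automorphisms of $G$, and $\Omega(G)$ is fixed by every automorphism, the second case then gives exactly one class.

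\emph{Exponent $p$ case.} Take $G \cong p_+^{1+2n}$. Every element satisfies $x^p=1$, so by \zcref{lem:poddes} we get $\Omega(G)=G$. No proper subgroup can contain $\Omega(G)$, so $\mathsf{EP}(G)=\varnothing$. Alternatively, this follows at once from \zcref{prop:exponents}, since $G$ has exponent $p$ and is not cyclic.

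\emph{Exponent $p^2$ case.} Take $G \cong p_-^{1+2n}$. By \zcref{lem:omegaexplicit}, $\Omega(G) \cong C_p \times p_+^{1+2(n-1)}$, which has order $p \cdot p^{2n-1} = p^{2n}$. This is index $p$ in $G$, which has order $p^{1+2n}$. Hence $\Omega(G)$ is proper, because $G$ has exponent $p^2$ and so contains elements outside $\Omega(G)$. It is also maximal, since its index is prime. The only subgroup $H$ with $\Omega(G) \leq H \lneq G$ is therefore $H=\Omega(G)$, and $\mathsf{EP}(G)=\{\Omega(G)\}$.

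One detail needs care for $n=1$, where $p_+^{1+0}$ should be read as $Z(G) \cong C_p$. There \zcref{lem:omegaexplicit} degenerates, but its proof still gives $\Omega(N)=\langle d,c\rangle$ of order $p^2$, which is index $p$ in $N$. The only step needing attention is confirming the order count in \zcref{lem:omegaexplicit}. The direct product $\langle d\rangle \times p_+^{1+2(n-1)}$ has order $p^{2n}$ because $\langle d\rangle \cap p_+^{1+2(n-1)} = 1$, so the count holds and the rest is immediate.
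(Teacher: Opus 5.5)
Your proposal is correct and is essentially the paper's proof: use \zcref{thm:endoiffcore} to reduce to locating $\Omega(G)$, then apply \zcref{lem:poddes} to get $\Omega(G)=G$ in the exponent-$p$ case and \zcref{lem:omegaexplicit} in the exponent-$p^2$ case. The only difference is that you spell out the order count showing $\Omega(p_-^{1+2n})$ has index $p$, hence is maximal and the unique endopure subgroup (your reading of the $n=1$ case is also correct), whereas the paper leaves this step implicit.
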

\begin{proof}
    Apply \zcref{thm:endoiffcore} to \zcref{lem:omegaexplicit} for the second case, and \zcref{lem:poddes} for the first case, since $p_+^{1+2n}$ has exponent $p$.
\end{proof}

Note that the proof of \zcref{lem:poddes} does not work for $p=2$ as $2$ does not divide $\binom{2}{2}=1$, and indeed we know that in $D_4$ we can take the product of involutions and get an element of order 4. As such, we require a different strategy for $p=2$. With the following lemma we will be able to completely determine the endopure subgroups of subgroups of extraspecial 2-groups.

\begin{lemm}\label{lem:ex2noendo}
    Let $H \leq G$ be a non-abelian subgroup of an extraspecial $2$-group $G$. Then $H$ has no endopure subgroups unless $H \cong Q_8$, where $C_2$ and $C_4$ are endopure.
\end{lemm}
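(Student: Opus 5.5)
The plan is to pin down the structure of $H$ well enough to exhibit, for every proper subgroup $K \lneq H$, a non-injective endomorphism $\phi$ of $H$ with $\phi(K) \leq K$ but $\phi(H) \not\leq K$. By \zcref{rem:reform}, this shows $K$ is not endopure.

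\textbf{Step 1: basic structure of $H$.} Let $z$ generate $Z(G)$. Every square and every commutator of $G$ lies in $Z(G) = \la z \ra$. Since $H$ is non-abelian, this gives $H' = \Phi(H) = \la z \ra$. So $H/\la z \ra$ is elementary abelian, and it carries the alternating commutator form.

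\textbf{Step 2: reduction via \zcref{prop:order2}.} Suppose $K \lneq H$ is endopure. By \zcref{prop:order2}, $K$ contains every involution of $H$. So whenever $H$ is generated by its involutions, there is nothing more to do. The remaining work splits according to $Z(H)$.

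\textbf{Case A: $Z(H)$ contains an involution $t \notin \la z \ra$.}
\begin{enumerate}
\item Since $t \notin \Phi(H)$, there is a maximal subgroup $H_1$ of $H$ with $t \notin H_1$. Because $t$ is central, $H = H_1 \times \la t \ra$.
\item Let $\pi$ be the projection onto $H_1$, viewed as an endomorphism of $H$. It is non-injective, since $t \in \ker \pi$.
\item By Step 2, $t \in K$, so $K = (K \cap H_1) \times \la t \ra$. Hence $\pi(K) = K \cap H_1 \leq K$.
\item If $\pi(H) = H_1 \leq K$, then $K \geq \la H_1, t \ra = H$, a contradiction. So $\pi$ witnesses that $K$ is not endopure.
\end{enumerate}
This argument is the same as the direct-product argument in \zcref{prop:abendo}, and it covers, for example, $Q_8 \times C_2$.

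\textbf{Case B: the only involution in $Z(H)$ is $z$.} Then $Z(H)$ is either $\la z \ra$ or a cyclic group $C_4 = \la u \ra$ with $u^2 = z$, because $Z(H)$ is abelian with $\Phi(Z(H)) \leq \la z \ra$. Using the commutator form on $H/\la z \ra$, whose radical is $Z(H)/\la z\ra$, I would write $H$ as one of the following.
\begin{itemize}
\item $H = E$, extraspecial, when $Z(H) = \la z \ra$.
\item $H = E \circ C_4$ with $E$ extraspecial, when $Z(H) = C_4$.
\end{itemize}
In both cases I will show that $H$ is generated by its involutions unless $H \cong Q_8$; by Step 2 this forces $K = H$, a contradiction.
\begin{itemize}
\item For $E \circ C_4$: every element $x$ of order $4$ satisfies $x^2 = z$, so $xu$ is an involution. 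Hence the involutions together with $u$ generate, and $u$ itself is a product of an order-$4$ element and an involution. This is the Pauli-group computation.
\item For extraspecial $E$: the squaring map gives a nondegenerate quadratic form on $E/\la z \ra \cong \mathbb{F}_2^{2n}$, and the involutions outside $\la z\ra$ correspond to the nonzero singular vectors. I would invoke the standard fact that the singular vectors of a nondegenerate quadratic form span the whole space except for the anisotropic form in dimension $2$. That exceptional form is exactly the one for $Q_8$.
\end{itemize}

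\textbf{Step 3: the exception $H \cong Q_8$.} Here $\la z \ra = \Omega(Q_8)$, because every non-injective endomorphism has image of order at most $2$, and that image contains only the identity and $z$. So \zcref{lem:endocore} shows that $C_2$ and every $C_4$ are endopure. The three copies of $C_4$ are permuted by $\mathrm{Aut}(Q_8)$, so up to automorphism the endopure subgroups are $C_2$ and $C_4$.

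\textbf{Main obstacle.} The hardest step is the structural decomposition in Step 1/Case B: justifying that $H = E \circ Z(H)$ from the commutator form, and verifying that involutions span in the quadratic-form argument. If the general spanning fact is awkward to cite, I would fall back on induction on $n$. Write $E = E_1 \circ D_4$ or $E = E_1 \circ Q_8$ with $n \geq 2$, and check directly that $Q_8 \circ Q_8 \cong D_4 \circ D_4$ and $Q_8 \circ D_4$ are generated by involutions, since products of pairs of order-$4$ elements from the two factors are involutions.
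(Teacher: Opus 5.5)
Your proof is correct, but it is organised differently from the paper's.

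\textbf{The paper's route.} It cites a classification of generalized extraspecial $2$-groups to write $H=E\times A$ or $H=(E\circ C_4)\times A$, with $E$ extraspecial and $A$ elementary abelian. It then uses central-product manipulations to show all of these are generated by involutions, except $Q_8\times C_2^n$: the computation in $D_4\circ Q_8$, the isomorphisms $Q_8\circ Q_8\cong D_4\circ D_4$ and $Q_8\circ C_4\cong D_4\circ C_4$, and associativity. A projection is used only for the family $Q_8\times C_2^n$.

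\textbf{Your route.} You split on $Z(H)$ instead.
\begin{itemize}
\item Your Case A splits off any central involution $t\notin\langle z\rangle$ as a direct factor, using a maximal subgroup that avoids $t$. The projection then handles every $H$ with a nontrivial elementary abelian factor at once, so you never need involution generation there.
\item In Case B you obtain $H=E$ or $H=E\circ C_4$ directly from the radical of the commutator form, rather than citing a structure lemma.
\item You replace the central-product bookkeeping by the fact that singular vectors span a nondegenerate quadratic space over $\mathbb{F}_2$, except the anisotropic plane. This fact is indeed easy: in minus type with $n\geq 2$, if $a$ lies in the anisotropic plane and $e_1,f_1$ is a hyperbolic pair, then $a+e_1+f_1$ is singular.
\end{itemize}
Your version is self-contained and more conceptual. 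The paper's is more hands-on but relies on the external classification.

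\textbf{One step to make explicit.} In the $E\circ C_4$ case, writing $u=w^{-1}(wu)$ helps only if the order-$4$ element $w$ already lies in the subgroup $I$ generated by involutions. This does hold:
\begin{itemize}
\item Since $H=\langle I,u\rangle$ with $u$ central and $H$ non-abelian, $I$ is non-abelian.
\item So $I$ is not of exponent $2$ and contains some $w$ of order $4$.
\item Then $w^2=z$, so $(wu)^2=1$ and $wu\in I$, which gives $u\in I$.
\end{itemize}
Your Pauli-group remark ($u$ as a product of three involutions) is the same point made concrete.
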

\begin{proof}
    For the case that $H \cong Q_8$ we can compute that $\Omega(Q_8) \cong C_2$, and as such it contains endopure subgroups $C_2$, and $C_4$.

    Let $Z \leq G$ denote the center of $G$. Since $H$ is not abelian there exist $a, b \in H$ with $[a, b] \neq e$, so $[a, b] \in Z$, and $Z \leq H$. Since $Z \leq H$ we have that $\Phi(H)=Z=H'$, and $H'=Z \leq Z(H)$, so $H$ is generalized extraspecial in the sense of \cite[Definition~3.1]{MR1887988}. By \cite[Lemma~3.2]{MR1887988} this means that $H=E \times A$ where $E$ is an extraspecial $2$-group and $A$ is an elementary abelian $2$-group, or $H=(E \circ C_4)\times A$. We will show that all of these $H$ are generated by their elements of order $2$, except for when $H \cong Q_8 \times C_2^n$ for some $n \geq 0$. Given \zcref{prop:order2}, this will prove the statement outside of this exceptional case.

    Note that $A=C_2^n$ and $D_4$ are each generated by their order $2$ elements. Recall also that any extraspecial $2$-group can be written as an iterated central product of copies of $D_4$ and $Q_8$ over $C_2$. Given a central product $A \circ B$ of $2$-groups over $C_2$, such that both $A$ and $B$ are generated by their elements of order $2$, it is straightforward to see that the elements of order $2$ of $A \circ B$ generate the images of $A$ and $B$, and thus all of $A \circ B$.

    In $D_4 \circ Q_8$ an element of the form $(1, i)$ can be written as $(\rho, i)(\rho^3\tau, 1)(\tau, 1)$, and $(\rho, i)^2=(\rho^2, -1)=(1, 1)$ in $D_4 \circ Q_8$. Thus $(1, i)$ is a product of elements of order $2$, and $D_4 \circ Q_8$ is generated by its elements of order $2$. The exact same argument applies to $D_4 \circ C_4$. Additionally, note that $Q_8 \circ Q_8 \cong D_4 \circ D_4$ and $Q_8 \circ C_4 \cong D_4 \circ C_4$. Given the associativity of central products over $C_2$, this is enough to show that any group of the form $H=(E \circ C_4)\times A$ is generated by its elements of order $2$, and the same holds for any group of the form $H=E \times A$, unless $E=Q_8$.

    Thus, all that remains is to consider the exceptional case $H \cong Q_8 \times C_2^n$ with $n \geq 1$.  If $K \lneq H$ is endopure it contains all involutions by \zcref{prop:order2}. Thus $K \cong L \times C_2^n$ where $L$ is a proper subgroup of $Q_8$ containing $Z(Q_8) \cong C_2$. Then we can use the non-injective projection $\phi(q,a) = (q,1)$ to reach a contradiction.
\end{proof}
    
\begin{thm}\label{thm:esfinal}\leavevmode
    \[
        |\mathsf{Tr}^\mathsf{Gl}(\mathcal{E}_p)| = \begin{cases}
            5 & p=2 \\
            \infty & p \text{ odd}.
        \end{cases}
    \]
\end{thm}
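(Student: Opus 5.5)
The plan is to use \zcref{thm:summary}: $\mathsf{Tr}^\mathsf{Gl}(\mathcal{E}_p)$ is the concept lattice of the context $(\mathsf{EP}(\mathcal{E}_p), \mathsf{EP}(\mathcal{E}_p), \leq_{\mathsf{EP}})$. So the first task is to list all endopure transfers $(K \leq H)$ with $H$ a subgroup of an extraspecial $p$-group. The second task is to compute the relation $\leq_{\mathsf{EP}}$ (for $p=2$), or to show the context yields infinitely many concepts (for $p$ odd).

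\textbf{Case $p=2$.} Let $H$ be a subgroup of an extraspecial $2$-group. If $H$ is abelian, \zcref{prop:abendo} forces $H\cong C_{2^n}$. Since the exponent is $4$, this gives the transfers $(1\leq C_2)$ and $(C_2\leq C_4)$. If $H$ is non-abelian, \zcref{lem:ex2noendo} gives only $H\cong Q_8$, with endopure subgroups $C_2$ and $C_4$. The three copies of $C_4$ in $Q_8$ are permuted by automorphisms, so up to isomorphism this contributes $(C_2\leq Q_8)$ and $(C_4\leq Q_8)$. Hence $\mathsf{EP}(\mathcal{E}_2)$ has exactly four elements.

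Next I compute the $4\times 4$ matrix using \zcref{prop:containment-condition}. For each pair $(L\leq K)$, $(H\leq G)$, I check whether there is a $\phi\colon G\to K$ with $\phi(H)\leq L$ but $\phi(G)\not\leq L$. The relevant maps are the squaring-type endomorphism of $C_4$, the inclusions $C_2, C_4\hookrightarrow Q_8$, and maps out of $Q_8$. Every map out of $Q_8$ is either injective or factors through $C_2^2$, by \zcref{lem:imageofendo}, which keeps the check small. Feeding the matrix into the concept lattice, I expect exactly $5$ concepts, probably a chain $\varnothing\subset \overline{\la 1\leq C_2\ra}\subset\cdots$. This case is a finite, routine computation.

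\textbf{Case $p$ odd.} By \zcref{coro:endopure-extra-odd}, each $p_-^{1+2n}$, for $n\geq1$, contributes the endopure transfer $(\Omega_n\leq p_-^{1+2n})$, where $\Omega_n=\Omega(p_-^{1+2n})$ has index $p$. These are pairwise non-isomorphic because their top groups have different orders. By \zcref{prop:join-bijection} they give infinitely many distinct join-irreducible global transfer systems, so $\mathsf{Tr}^\mathsf{Gl}(\mathcal{E}_p)$ is infinite. For this direction no further classification of $\mathsf{EP}(\mathcal{E}_p)$ is needed: distinct join-irreducibles are already distinct elements of the lattice.

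The main obstacle is the exact count $5$ for $p=2$. It requires being sure the list of four endopure transfers is complete. That list covers the abelian subgroups via \zcref{prop:abendo} and the non-abelian ones via \zcref{lem:ex2noendo}. It also requires computing the four-by-four context correctly, in particular deciding whether $(C_2\leq C_4)$ lies in $\underline{\la C_4\leq Q_8\ra}$ and similar entries involving the non-injective maps $Q_8\to C_4$ with image $C_2$.
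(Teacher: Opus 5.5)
Your route is the same as the paper's. For $p=2$ you classify $\mathsf{EP}(\mathcal{E}_2)$ via \zcref{prop:abendo} and \zcref{lem:ex2noendo} and then compute the $4\times 4$ context; for odd $p$ you combine \zcref{coro:endopure-extra-odd} with \zcref{prop:join-bijection}, and that case is complete as written. For $p=2$, however, you stop at ``I expect exactly $5$ concepts'', and that computation is the whole content of this case. Your guessed shape is also wrong. $\overline{\la 1\leq C_2\ra}$ is the \emph{top} of $\mathsf{Tr}^\mathsf{Gl}(\mathcal{E}_2)$, not the element just above the bottom. Every transfer between $2$-groups is a composite of index-$2$ normal steps, and each step is a restriction of $(1\leq C_2)$ along a quotient map.

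Running the test of \zcref{prop:containment-condition} gives the following:
\begin{itemize}
\item $\overline{\la 1\leq C_2\ra}$ lies in none of the four antigenerated systems. The witnesses are the identity, $C_4\to C_2$, and $Q_8\to Q_8/\la i\ra$.
\item $\overline{\la C_2\leq C_4\ra}$ lies in $\underline{\la 1\leq C_2\ra}$, $\underline{\la C_2\leq Q_8\ra}$ and $\underline{\la C_4\leq Q_8\ra}$. The only involution of $C_4$ lies in $C_2$, and every map $Q_8\to C_4$ factors through $Q_8^{\mathrm{ab}}\cong C_2^2$, so it lands in $C_2$.
\item $\overline{\la C_2\leq Q_8\ra}$ lies in $\underline{\la 1\leq C_2\ra}$ and $\underline{\la C_4\leq Q_8\ra}$.
\item $\overline{\la C_4\leq Q_8\ra}$ lies only in $\underline{\la 1\leq C_2\ra}$.
\end{itemize}
The remaining non-containments are witnessed by the identities of $C_4$ and $Q_8$, and by the maps $C_4\to Q_8$ sending a generator to $i$, respectively $j$.

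The one delicate entry is $\overline{\la C_2\leq Q_8\ra}\subset\underline{\la C_4\leq Q_8\ra}$. Suppose an endomorphism $\phi$ of $Q_8$ had $\phi^{-1}(Z(Q_8))=\la i\ra$. Then $\phi$ sends $i$ into $Z(Q_8)$, so it is non-injective. By \zcref{lem:imageofendo} its image then lies in $Z(Q_8)$, contradicting $\phi(j)\notin Z(Q_8)$.

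These four attribute sets are strictly nested. So the intents are exactly these four sets plus the full attribute set, and the lattice is a five-element chain: the minimal global transfer system, then $\overline{\la C_2\leq C_4\ra}\subset\overline{\la C_2\leq Q_8\ra}\subset\overline{\la C_4\leq Q_8\ra}\subset\overline{\la 1\leq C_2\ra}$. With this computation filled in, your argument is the paper's proof.
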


\begin{proof}
    First, let $p$ be an odd prime. Then by \zcref{coro:endopure-extra-odd} for each $n$ the extraspecial $p$-group $p_-^{1+2n}$ possesses an endopure subgroup, giving an endopure transfer. As such, there are infinitely many meet- and join-irreducible elements of the lattice $\mathsf{Tr}^\mathsf{Gl}(\mathcal{E}_p)$ as required.

    Now, suppose $p=2$. For $n = 0,1$, one obtains the following endopure transfers using \zcref{lem:ex2noendo} and \zcref{prop:abendo}:
    \[ \{1 \leq C_2,\, C_2 \leq C_4,\, C_2 \leq Q_8, \, C_4 \leq Q_8\}\]
    The corresponding formal context matrix for this is
    \[
    \begin{bmatrix}
        0&1&1&1\\
        0&0&0&0\\
        0&1&0&0\\
        0&1&1&0
    \end{bmatrix}
    \]
    from which it follows that $\mathsf{Tr}^\mathsf{Gl}(\mathcal{E}_2) \cong [4]$, giving rise to the 5 claimed global transfers.
\end{proof}

\begin{rem}
    We conclude this section with some discussion about $\mathsf{Tr}^{\mathsf{Gl}}(\mathcal{E}_p(n))$ for $p$ an odd prime. This is the family of extraspecial $p$-groups of order $\leq p^{1+2n}$ along with their subgroups. By \zcref{thm:esfinal} we know that there are infinitely many transfer systems as $n \to \infty$, but it would be nice to know exactly what all the meet- and join-irreducibles are, along with the inclusions between them. We already know that we have the endopure transfers $\Omega(p^{1+2n}_-) \cong (C_p \times p^{1+2(n-1)}_+) \lneq p_-^{1+2n}$, but the question remains if there are any other transfers. In particular, we are interested in subgroups $H$ of $p_-^{1+2n}$ such that $H$ possesses endopure transfers. By \zcref{prop:exponents} it is enough to consider those $H$ of exponent $p^2$, which is a very restrictive condition. 

    Using \texttt{GAP} we are able to identify for each $n$ another endopure transfer
    \[
    \Omega( p_+^{1+2(n-1)} \circ C_{p^2}) \cong p_+^{1+2(n-1)} \lneq ( p_+^{1+2(n-1)} \circ C_{p^2})
    \]
    and we conjecture that this is the \emph{only} new endopure transfer for each $n$. Following through the inclusions of the generated and antigenerated transfer systems, if our conjecture is true, then one can compute that
    \[
        \mathsf{Tr}^{\mathsf{Gl}}(\mathcal{E}_p(n)) \cong [2n+1].
    \]
    In particular, we see that the corresponding lattice of global transfer systems is again linear, and as $n \to \infty$ we have $\mathsf{Tr}^{\mathsf{Gl}}(\mathcal{E}_p) \cong \omega+1$.
\end{rem}

\subsection{Finite groups of bounded order}\label{subsec:allgroups}

We can extend our families of finite abelian groups to families of \textit{all} finite groups of order less than or equal to $n$, which we denote by $\mathcal{G}_n$. In \zcref{tab:ep} we report the first handful of values for this family; the number of endopure transfers, the density of the formal context matrix, and the number of transfers. Of course, given the complexity of finite group theory, we do not hope to be able to give any formula for this situation, but the fact that we can do this highlights the strength of the techniques developed in the paper, and doing so leads to some intriguing observations.

\begin{table}[h]
\begin{tabular}{c|c|c|c}
$n$ & $|\mathsf{EP}(\mathcal{G}_n)|$ & $\delta(\mathcal{G}_n)$ & $|\mathsf{Tr}^\mathsf{Gl}(\mathcal{G}_n)|$ \\ \hline
2 & 1 & 0 & 2 \\
3 & 2 & 0.5 & 4 \\
4 & 3 & 0.5556 & 6 \\
5 & 4 & 0.6875 & 12 \\
6 & 5 & 0.64 & 14 \\
7 & 6 & 0.7222 & 28 \\
8 & 9 & 0.6914 & 52 \\
9 & 10 & 0.72 & 76 \\
10 & 11 & 0.7107 & 84 \\
11 & 12 & 0.75 & 168 \\
12 & 14 & 0.7449 & 240 \\
13 & 15 & 0.7733 & 480 \\
14 & 16 & 0.7734 & 576 \\
15 & 16 & 0.7734 & 576 \\
16 & 25 & 0.72 & 2016 \\
17 & 26 & 0.7396 & 4032 \\
18 & 27 & 0.7353 & 4392 \\
19 & 28 & 0.7526 & 8784 \\
20 & 30 & 0.7567 & 13104
\end{tabular}
\caption{A table listing the number of endopure transfers, the density, and number of global transfers for the family of all finite groups of order $\leq n$.}\label{tab:ep}
\end{table}

\begin{ex}
    As we have seen many cases where linear orders (and products of such) appear as transfer lattices, let us consider the case of $\mathcal{G}_6$. We know from the previous section that $\mathsf{Tr}^\mathsf{Gl}(\mathcal{A}_6) \cong [2] \times [1] \times [1]$. The non abelian case is rather different:   $\mathsf{Tr}^\mathsf{Gl}(\mathcal{G}_6) \cong [1] \times L$ where
    \[L \cong \begin{tikzcd}
	& \bullet & \bullet && \\
	\bullet &&& \bullet & \bullet \\
	& \bullet & \bullet
	\arrow[from=1-2, to=1-3]
	\arrow[from=1-3, to=2-4]
	\arrow[from=2-1, to=1-2]
	\arrow[from=2-1, to=3-2]
	\arrow[from=2-4, to=2-5]
	\arrow[from=3-2, to=1-3]
	\arrow[from=3-2, to=3-3]
	\arrow[from=3-3, to=2-4]
\end{tikzcd}\]
\end{ex}

\begin{rem}
    We have seen above that in the abelian case, the density as $n \to \infty$ approaches $1$. One could perhaps expect the same for $\delta(\mathcal{G}_n)$, but this does not appear to be the case. \zcref{fig:gnuplot} provides a plot of the density values for $3 \leq n \leq 64$. The general fact that $(L \leq K) \leq_{\mathsf{EP}} (H \leq G)$ whenever there are no non-trivial homomorphisms $G \to K$ produces the general upwards trend. But when $n=2^k$ we see a steep drop in the density. This is due to the sheer amount of groups of order $2^k$  giving rise to many new endopure transfers (c.f., \zcref{fig:gnep}) which are not necessarily related under $\leq_{\mathsf{EP}}$ (compare with \zcref{ex:density-cyclic}). From this (minimal) computational evidence, one may cautiously conjecture that the limiting density is strictly bounded above by $1$ and below by $0.5$, with the value being perhaps $\approx 0.75$. The authors see no reason why this density should limit to some known constant. Note that this does not contradict the density calculation of \zcref{subsec:ab} as almost all groups are non-abelian.
    \begin{figure}[h]
    \centering
    \begin{tikzpicture}
\begin{axis}[
    width=12cm,
    height=8cm,
    xlabel={$n$},
    ylabel={$\delta(\mathcal{G}_n)$},
    xmin=1.5,
    xmax=64.5,
    ymin=0.5,
    ymax=0.85,
    grid=major,
    scatter,
    only marks,
    mark=*,
    mark size=1.5pt,
    scatter/use mapped color={red!80!black}
]
\addplot[
    scatter,
] coordinates {
    (3,0.5)
    (4,0.55555556)
    (5,0.6875)
    (6,0.64)
    (7,0.72222222)
    (8,0.69135802)
    (9,0.72)
    (10,0.7107438)
    (11,0.75)
    (12,0.74489796)
    (13,0.77333333)
    (14,0.7734375)
    (15,0.7734375)
    (16,0.72)
    (17,0.73964497)
    (18,0.73525377)
    (19,0.75255102)
    (20,0.75666667)
    (21,0.76795005)
    (22,0.76660156)
    (23,0.77961433)
    (24,0.78172255)
    (25,0.789375)
    (26,0.79000595)
    (27,0.79394267)
    (28,0.79958678)
    (29,0.80790123)
    (30,0.80790123)
    (31,0.81569)
    (32,0.747701)
    (33,0.74770146)
    (34,0.74606949)
    (35,0.74606949)
    (36,0.74481481)
    (37,0.75027171)
    (38,0.74905482)
    (39,0.75372875)
    (40,0.76226128)
    (41,0.76703157)
    (42,0.765202)
    (43,0.76981941)
    (44,0.7729)
    (45,0.7729)
    (46,0.77188511)
    (47,0.77623991)
    (48,0.77532124)
    (49,0.77847356)
    (50,0.77807617)
    (51,0.77807617)
    (52,0.78023669)
    (53,0.78352077)
    (54,0.78218102)
    (55,0.78519715)
    (56,0.78890724)
    (57,0.79164041)
    (58,0.79147042)
    (59,0.79438776)
    (60,0.78424032)
    (61,0.78708166)
    (62,0.7872)
    (63,0.789702)
    (64,0.743977)
};
\end{axis}
\end{tikzpicture}
\caption{A scatter plot showing the density of the formal context matrix corresponding to the global transfer systems for $\mathcal{G}_n$.}\label{fig:gnuplot}
\end{figure}

    \begin{figure}[h]
    \centering
    \begin{tikzpicture}
\begin{axis}[
    width=12cm,
    height=8cm,
    xlabel={$n$},
    ylabel={$|\mathsf{EP}(\mathcal{G}_n)|$},
    xmin=1.5,
    xmax=64.5,
    ymin=0,
    ymax=600,
    grid=major,
    scatter,
    only marks,
    mark=*,
    mark size=1.5pt,
    scatter/use mapped color={red!80!black}
]
\addplot[
    scatter,
] coordinates {
(2,1)
    (3,2)
    (4,3)
    (5,4)
    (6,5)
    (7,6)
    (8,9)
    (9,10)
    (10,11)
    (11,12)
    (12,14)
    (13,15)
    (14,16)
    (15,16)
    (16,25)
    (17,26)
    (18,27)
    (19,28)
    (20,30)
    (21,31)
    (22,32)
    (23,33)
    (24,39)
    (25,40)
    (26,41)
    (27,43)
    (28,44)
    (29,45)
    (30,45)
    (31,46)
    (32,86)
    (33,86)
    (34,87)
    (35,87)
    (36,90)
    (37,91)
    (38,92)
    (39,93)
    (40,96)
    (41,97)
    (42,98)
    (43,99)
    (44,100)
    (45,100)
    (46,101)
    (47,102)
    (48,126)
    (49,127)
    (50,128)
    (51,128)
    (52,130)
    (53,131)
    (54,133)
    (55,134)
    (56,137)
    (57,138)
    (58,139)
    (59,140)
    (60,148)
    (61,149)
    (62,150)
    (63,151)
    (64,539)
};
\end{axis}
\end{tikzpicture}
\caption{A scatter plot showing the number of endopure transfers for the family $\mathcal{G}_n$.}\label{fig:gnep}
\end{figure}
\end{rem}

\begin{rem}[Ghost orders]
    We see a curio in \zcref{tab:ep} in that the rows for 14 and 15 are repeated. This is due to the fact that the only group of order 15 is the cyclic group $C_{15}$ which introduces no new endopure transfers.  We will say that $n \in \mathbb{N}$ is a \emph{ghost order} if there are no groups of order $n$ with an endopure subgroup.
    
    We say that $n = p_1^{e_1} \cdots p_r^{e_r} \in \mathbb{N}$ is an \emph{abelian number} if $e_i \leq 2$ for all $i$ and $p_i^k$ is not congruent to $1 \mod{p_j}$ for all $i$ and $j$ and $1 \leq k \leq e_i$. Then it is well known that any group with order $n$ is necessarily abelian.

    Clearly if $n$ is a non-prime power abelian number then $n$ is a ghost order. What may be somewhat of a surprise, is that these non-prime power abelian numbers are not the only ghost orders. 
    
    Say that a group is \textit{indecomposable} if it is not of the form $G_1 \times G_2$ for $G_i$ non-trivial. If there are no groups of order $n$ which are indecomposable, then we say that $n$ is decomposable. Then  work of Erd\H os--P\'{a}lfy in \cite{MR1692288} studies the sequence of decomposable numbers. Note that this sequence includes all non-prime-power abelian numbers, so subsumes the above sequence.
    
    We have computationally verified that every entry of \cite{oeisA243592} is a ghost order. This is extremely surprising in light of \zcref{ex:c4xq8} which tells us that direct product groups \textit{can} possess endopure subgroups. However, given the computational evidence one would conjecture that every decomposable number is a ghost order. Pairing this with the observation from \cite{MR1692288} that essentially all odd numbers are decomposable, we could conclude that essentially all odd numbers will be ghost orders.
    
    Even more surprising is that there are ghost orders which are not captured by any of the discussion above. For example $n=30$ is a ghost order. The groups of order 30 are $C_{30}, D_{15},C_5 \times S_3, C_3 \times D_5$ and none of these have endopure subgroups.

    For posterity, we list all ghost orders $<500$ below, with those in red being those which do not come from decomposable numbers. It appears to be the case, at least with this evidence, that all ghost orders are of the form $2^kn$ where $n$ is a decomposable number, but not all numbers of this form are ghost orders (for example $60$ is not a ghost order, due to the group $A_5$).

15, \textcolor{red!80!black}{30}, 33, 35, 45, 51, 65, \textcolor{red!80!black}{66}, 69, \textcolor{red!80!black}{70}, 77, 85, 87, \textcolor{red!80!black}{90}, 91, 95, 99, \textcolor{red!80!black}{102}, 105, 115, 119, 123, \textcolor{red!80!black}{130}, \textcolor{red!80!black}{132}, 133, 135, \textcolor{red!80!black}{138}, \textcolor{red!80!black}{140}, 141, 143, 145, 153, \textcolor{red!80!black}{154}, 159, 161, 165, \textcolor{red!80!black}{170}, \textcolor{red!80!black}{174}, 175, 177, \textcolor{red!80!black}{182}, 185, 187, \textcolor{red!80!black}{190}, 195, \textcolor{red!80!black}{198}, \textcolor{red!80!black}{204}, 207, 209, \textcolor{red!80!black}{210}, 213, 215, 217, 221, \textcolor{red!80!black}{230}, 231, 235, \textcolor{red!80!black}{238}, 245, \textcolor{red!80!black}{246}, 247, 249, 255, 259, \textcolor{red!80!black}{260}, 261, \textcolor{red!80!black}{264}, 265, \textcolor{red!80!black}{266}, 267, \textcolor{red!80!black}{270}, \textcolor{red!80!black}{273}, \textcolor{red!80!black}{276}, \textcolor{red!80!black}{280}, \textcolor{red!80!black}{282}, 285, \textcolor{red!80!black}{286}, 287, \textcolor{red!80!black}{290}, 295, 297, 299, 303, \textcolor{red!80!black}{306}, \textcolor{red!80!black}{308}, 315, \textcolor{red!80!black}{318}, 319, 321, \textcolor{red!80!black}{322}, 323, 325, 329, \textcolor{red!80!black}{330}, 335, 339, \textcolor{red!80!black}{340}, 341, 345, \textcolor{red!80!black}{348}, \textcolor{red!80!black}{350}, \textcolor{red!80!black}{354}, 357, \textcolor{red!80!black}{364}, 365, 369, \textcolor{red!80!black}{370}, 371, \textcolor{red!80!black}{374}, 377, \textcolor{red!80!black}{380}, 385, \textcolor{red!80!black}{390}, 391, 393, 395, \textcolor{red!80!black}{396}, \textcolor{red!80!black}{399}, 403, 407, \textcolor{red!80!black}{408}, 411, 413, \textcolor{red!80!black}{414}, 415, \textcolor{red!80!black}{418}, \textcolor{red!80!black}{420}, 423, 425, \textcolor{red!80!black}{426}, 427, 429, \textcolor{red!80!black}{430}, \textcolor{red!80!black}{434}, 435, 437, \textcolor{red!80!black}{442}, 445, 447, 451, 455, 459, \textcolor{red!80!black}{460}, \textcolor{red!80!black}{462}, 469, \textcolor{red!80!black}{470}, 473, 475, \textcolor{red!80!black}{476}, 477, 481, 483, 485, \textcolor{red!80!black}{490}, \textcolor{red!80!black}{492}, 493, \textcolor{red!80!black}{494}, 495, \textcolor{red!80!black}{498}.
\end{rem}

\newpage

\section*{Appendix. \texttt{GAP} code}\hypertarget{appendix}{}

\begin{lstlisting}[language=GAP]
# A function which checks if a subgroup H of G is endopure
# Returns true if H is endopure in G, and false else
isEndopure := function(G, H)
    local normals, subgroups, N, K, Q, HQ, I, f0, autK, alpha, nat, HQimage, KK;
    
    if(Size(H) = Size(G)) then
        return false;
    fi;
    
    normals := NormalSubgroups(G);
    subgroups := ConjugacyClassesSubgroups(G);

    # Loop over proper, non-trivial normal subgroups N
    for N in normals do
        if N <> TrivialSubgroup(G) and N <> G then

            # Natural quotient map G -> G/N
            nat := NaturalHomomorphismByNormalSubgroup(G, N);

            # Use the image of this map as the quotient group
            Q := Image(nat);

            # Image of H in G/N (i.e., HN/N)
            HQ := Image(nat, H);

            # Loop over proper subgroups K not contained in H
            for KK in subgroups do
                K := Representative(KK);
                if K <> G and not IsSubgroup(H, K) then

                    # K intersect H
                    I := Intersection(K, H);

                    # Check whether G/N is isomorphic to K
                    f0 := IsomorphismGroups(Q, K);

                    if f0 <> fail then

                        # Image of HN/N under f0
                        HQimage := Image(f0, HQ);
                        
                        # Look for an aut of K taking HQimage onto K intersect H
                        autK := AutomorphismGroup(K);

                        for alpha in Elements(autK) do
                            if IsSubset(I,Image(alpha, HQimage)) then
                                # A matching pair has been found
                                return false;
                            fi;
                        od;
                    fi;
                fi;
            od;
        fi;
    od;
    return true;
end;
\end{lstlisting}

\newpage

\begin{lstlisting}[language=GAP]
# A function which checks if subgroups H and K are isomorphic via an automorphism
# Returns true if H and K are isomorphic, and false else
AreEquivalent := function(H,K,A)
    return ForAny(Elements(A), phi -> Image(phi,H) = K);
end;


# A function which returns EP(G)
# Returns pairs [H,G] where H is endopure in G
endopureSubgroups := function(G)
    local classes, res, pair, H, C, A;
    
    res := [];
    
    classes := ConjugacyClassesSubgroups(G);
    A := AutomorphismGroup(G);
    
    for C in classes do
        H := Representative(C);
        
        # Check if H is endopure in G
        if(isEndopure(G,H)) then
            pair := [H,G];
            # Check if an automorphic copy of H already has been found
            if ForAll(res, rep -> not AreEquivalent(H, rep[1], A)) then
                Add(res, pair);
            fi;
        fi;
    od;
    return res;
end;
\end{lstlisting}

\begin{lstlisting}[language=GAP]
# Computing all endopure subgroups of Q(64)
G:=SmallGroup(64,54);
for S in endopureSubgroups(G) do
    Print(StructureDescription(S[1]),"<",StructureDescription(S[2]), "\n");
od;

# Output
C2<Q64
C4<Q64
C4<Q64
C8<Q64
Q8<Q64
C16<Q64
Q16<Q64
Q32<Q64
C32<Q64
\end{lstlisting}

\newpage
\printbibliography

\end{document}